\newtheorem{thm}{Theorem}[section]
\newtheorem{lem}[thm]{Lemma}
\newtheorem{prop}[thm]{Proposition}
\newtheorem{cor}[thm]{Corollary}
\theoremstyle{definition}
\newtheorem{df}[thm]{Definition}
\theoremstyle{remark}
\newtheorem{rem}[thm]{Remark}
\numberwithin{equation}{section}
\title{Cyclic Sieving and Plethysm Coefficients}
\author{David B Rush}
\address{Department of Mathematics, Massachusetts Institute of Technology, 77 Massachusetts Ave, Cambridge, MA 02139}
\email{dbr@mit.edu}
\date{\today}
\begin{document}

\begin{abstract}

A combinatorial expression for the coefficient of the Schur function $s_{\lambda}$ in the expansion of the plethysm $p_{n/d}^d \circ s_{\mu}$ is given for all $d$ dividing $n$ for the cases in which $n=2$ or $\lambda$ is rectangular.  In these cases, the coefficient $\langle p_{n/d}^d \circ s_{\mu}, s_{\lambda} \rangle$ is shown to count, up to sign, the number of fixed points of an $\langle s_{\mu}^n, s_{\lambda} \rangle$-element set under the $d^{\text{th}}$ power of an order-$n$ cyclic action.  If $n=2$, the action is the Sch\"utzenberger involution on semistandard Young tableaux (also known as evacuation), and, if $\lambda$ is rectangular, the action is a certain power of Sch\"utzenberger and Shimozono's \textit{jeu-de-taquin} promotion.  

This work extends results of Stembridge and Rhoades linking fixed points of the Sch\"utzenberger actions to ribbon tableaux enumeration.  The conclusion for the case $n=2$ is equivalent to the domino tableaux rule of Carr\'e and Leclerc for discriminating between the symmetric and antisymmetric parts of the square of a Schur function.  

\end{abstract}

\maketitle

\section{Introduction}

Given an irreducible polynomial representation $V$ of $GL_m(\mathbb{C})$ with character $f(x_1, x_2, \ldots, x_m)$, the degree-$n$ power-sum plethysms \[(p_{n/d}^d \circ f)(x_1, x_2, \ldots, x_m) := f \left(x_1^{n/d}, x_2^{n/d}, \ldots, x_m^{n/d} \right)^d\] for $d$ dividing $n$ are a family of virtual characters that shed light onto the structure of the $n$-fold tensor power $V^{\otimes n}$.  For example, $p_1^2 \circ f = f^2$ is the character of $V^{\otimes 2}$ and $p_2 \circ f$ is the character of the Grothendieck group element $[\operatorname{Sym}^2(V)] - [\wedge^2(V)]$, so together they describe the decomposition of the tensor square into its symmetric and alternating components, \[V \otimes V = \operatorname{Sym}^2(V) \oplus \wedge^2(V).\]  

Both degree-$2$ power-sum plethysms admit combinatorial descriptions.  According to the celebrated Littlewood--Richardson rule, the coefficient of a Schur function $s_{\lambda}$ in the square of a Schur function $s_{\mu}$ is the number of Yamanouchi tableaux of shape $\lambda / \mu$ and content $\mu$.  For the coefficient of $s_{\lambda}$ in the plethysm $p_2 \circ s_{\mu}$, Carr\'e and Leclerc \cite{Carre} in 1995 presented an analogous rule facilitated by the exhibition of a collection of combinatorial objects they called Yamanouchi domino tableaux.  

In this article, we present as our first objective a consistent combinatorial interpretation for both plethysm coefficients relying only on Yamanouchi (ordinary) tableaux.  Considering the Sch\"utzenberger involution on a tableau set with cardinality given by the coefficient of $s_{\lambda}$ in $s_{\mu}^2$, we prove that the coefficient of $s_{\lambda}$ in $p_2 \circ s_{\mu}$ counts, up to sign, the number of tableaux fixed under the involution.  

Then we turn to our second objective --- extending the fixed-point approach to higher degree plethysm coefficients.  For all positive integers $n$, provided that $\lambda$ is rectangular, there is a natural order-$n$ cyclic action on the tableaux specified by the Littlewood--Richardson rule for the coefficient of $s_{\lambda}$ in $s_{\mu}^n$, and we prove that coefficient of $s_{\lambda}$ in $p_{n/d}^d \circ s_{\mu}$ counts (up to sign) the number of tableaux fixed under the $d^{\text{th}}$ power of the cyclic action.  This yields a consistent combinatorial interpretation for the coefficient of $s_{\lambda}$ in each degree-$n$ power-sum plethysm of $s_{\mu}$.  

The fixed-point approach is reminiscent of the \textit{cyclic sieving phenomenon} of Reiner, Stanton, and White \cite{Reiner}, a common occurrence in combinatorics in which the fixed points of the powers of a natural cyclic action on a finite set are enumerated by root-of-unity evaluations of an associated generating function.  Of course, our formulas do not constitute instances of the cyclic sieving phenomenon \textit{per se}.  Nonetheless, they jibe with the cyclic sieving paradigm: Not only is the Newton power sum $p_{n/d}^d$ a root-of-unity specialization of a Hall--Littlewood function, but a 1997 conjecture of Lascoux, Leclerc, and Thibon \cite{Lascoux} holds that the plethysm $p_{n/d}^d \circ s_{\mu}$ is itself a root-of-unity specialization of an LLT function.  

Thus, by matching plethysm coefficients to cardinalities of fixed-point sets of cyclic actions on tableaux, we contribute a complement to the Littlewood--Richardson rule that underscores the ubiquity of cyclic sieving in combinatorics and doubles as (heuristic) evidence for the longstanding Lascoux--Leclerc--Thibon conjecture.  

\subsection{Plethysms}
Let $\Lambda$ be the ring of symmetric functions over $\mathbb{Z}$ (cf. Macdonald \cite{Macdonald}).  For all $f, g \in \Lambda$, if $V$ and $W$ are polynomial representations of $GL_m(\mathbb{C})$ with characters $\chi_V = f(x_1, x_2, \ldots, x_m)$ and $\chi_W = g(x_1, x_2, \ldots, x_m)$, respectively, then $\chi_{V \oplus W} = (f+g)(x_1, x_2, \ldots, x_m)$ and $\chi_{V \otimes W} = (fg)(x_1, x_2, \ldots, x_m)$.  Plethysm is a binary operation on $\Lambda$ (so named by Littlewood \cite{Littlewood} in 1950) that is compatible with representation composition in the same sense that addition and multiplication correspond to representation direct sum and tensor product, respectively.  

To wit, if $\rho \colon GL_m(\mathbb{C}) \rightarrow GL_M(\mathbb{C})$ is a polynomial representation of $GL_m(\mathbb{C})$ with character $g(x_1, x_2, \ldots, x_m)$, and $\sigma \colon GL_M(\mathbb{C}) \rightarrow GL_N(\mathbb{C})$ is a polynomial representation of $GL_M(\mathbb{C})$ with character $f(x_1, x_2, \ldots, x_M)$, then the composition $\sigma \circ \rho \colon GL_m(\mathbb{C}) \rightarrow GL_N(\mathbb{C})$ is a polynomial representation of $GL_m(\mathbb{C})$ with character $(f \circ g)(x_1, x_2, \ldots, x_m)$, where $f \circ g \in \Lambda$ denotes the plethysm of $f$ and $g$.  A formal definition is given in section 2.  

We are herein concerned with plethysms of the form $p_{n/d}^d \circ s_{\mu}$, where $\mu$ is a partition, $s_{\mu}$ denotes the Schur function associated to $\mu$, $d$ divides $n$, and $p_{n/d}$ denotes the $(n/d)^{\text{th}}$ power-sum symmetric function, $x_1^{n/d} + x_2^{n/d} + \cdots$.  Defining an inner product $\langle \text{ }, \text{ } \rangle$ on $\Lambda$ by requiring that the Schur functions form an orthonormal basis, we obtain a convenient notation --- $\langle f, s_{\lambda} \rangle$ --- for the coefficient of $s_{\lambda}$ in the expansion of a symmetric function $f$ as a linear combination of Schur functions.  The main achievement in this article is a combinatorial description of the coefficients $\langle p_{n/d}^d \circ s_{\mu} ,  s_{\lambda} \rangle$ for the cases in which $n=2$ or $\lambda$ is rectangular.  

Let $\mu = (\mu_1, \mu_2, \ldots, \mu_m)$.  If $n=2$, the Littlewood--Richardson multiplicity $\langle s_{\mu}^n, s_{\lambda} \rangle$ is the number of semistandard Young tableaux of shape $\lambda$ and content $\overline{\mu} \mu := (\mu_m, \ldots, \mu_1, \mu_1, \ldots, \mu_m)$ for which the reading word is anti-Yamanouchi in $\lbrace 1, 2, \ldots, m \rbrace$ and Yamanouchi in $\lbrace m+1, m+2, \ldots, 2m \rbrace$.  The Sch\"utzenberger involution (also known as evacuation) on a semistandard tableau preserves the shape and reverses the content, so it gives an action on the tableaux of shape $\lambda$ and content $\overline{\mu} \mu$, which turns out to restrict to those tableaux with words satisfying the aforementioned Yamanouchi conditions (cf. Remark~\ref{evacdescends}).  

For the case in which $n$ may vary but $\lambda$ is rectangular, we treat the coefficient $\langle s_{\mu}^n, s_{\lambda} \rangle$ somewhat differently.  In general, the Littlewood--Richardson multiplicity $\langle s_{\mu}^n, s_{\lambda} \rangle$ is the number of semistandard Young tableaux of shape $\lambda$ and content $\mu^n := (\mu_1, \ldots, \mu_m, \mu_1, \ldots, \mu_m, \ldots, \mu_1, \ldots, \mu_m)$ for which the reading word is Yamanouchi in the alphabets $\lbrace km + 1, km + 2, \ldots, (k+1)m \rbrace$ for all $0 \leq k \leq n-1$.  On a semistandard tableau, \textit{jeu-de-taquin} promotion (also introduced by Sch\"utzenberger; cf. \cite{Schutz}) preserves the shape and permutes the content by the long cycle in $\mathfrak{S}_{mn}$, so $m$ iterations of promotion gives an action on the tableaux of shape $\lambda$ and content $\mu^n$.  If $\lambda$ is rectangular, this action has order $n$, and it, too, restricts to those tableaux with words satisfying the requisite Yamanouchi conditions (cf. Remark~\ref{promdescends}).  

We are at last poised to state our main results.  

\begin{thm} \label{mainevac}
Let $\textnormal{EYTab}(\lambda, \overline{\mu}{\mu})$ be the set of all semistandard tableaux of shape $\lambda$ and content $\overline{\mu} \mu$ with reading word anti-Yamanouchi in $\lbrace 1, 2, \ldots, m \rbrace$ and Yamanouchi in $\lbrace m+1, m+2, \ldots, 2m \rbrace$, and let $\xi$ act on $\textnormal{EYTab}(\lambda, \overline{\mu}{\mu})$ by the Sch\"utzenberger involution.  Then \[ | \lbrace T \in \textnormal{EYTab}(\lambda, \overline{\mu}{\mu}) : \xi(T) = T \rbrace | = \pm \left \langle p_2 \circ s_{\mu}, s_{\lambda} \right \rangle.\]  
\end{thm}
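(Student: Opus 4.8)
The plan is to identify $\langle p_2 \circ s_\mu, s_\lambda \rangle$ with the trace of a transposition operator on a Littlewood--Richardson multiplicity space, and then to read off that trace as a signed count of $\xi$-fixed points. First I would record the elementary identity $p_2 = h_2 - e_2$; since plethysm is additive in its first argument, this gives $p_2 \circ s_\mu = h_2 \circ s_\mu - e_2 \circ s_\mu$, and hence
\[
\langle p_2 \circ s_\mu, s_\lambda \rangle \;=\; \langle h_2 \circ s_\mu, s_\lambda \rangle \;-\; \langle e_2 \circ s_\mu, s_\lambda \rangle .
\]
Fix $m \geq \ell(\lambda)$ and let $V_\mu$ be the polynomial $GL_m(\mathbb{C})$-module with character $s_\mu$, so that $h_2 \circ s_\mu$ and $e_2 \circ s_\mu$ are the characters of $\mathrm{Sym}^2 V_\mu$ and $\wedge^2 V_\mu$. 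Writing $M_\lambda := \mathrm{Hom}_{GL_m(\mathbb{C})}(V_\lambda, V_\mu \otimes V_\mu)$, the transposition $\tau \colon V_\mu \otimes V_\mu \to V_\mu \otimes V_\mu$ is $GL_m(\mathbb{C})$-equivariant and so acts on $M_\lambda$, its $+1$- and $-1$-eigenspaces having dimensions $\langle h_2 \circ s_\mu, s_\lambda \rangle$ and $\langle e_2 \circ s_\mu, s_\lambda \rangle$; thus $\langle p_2 \circ s_\mu, s_\lambda \rangle = \mathrm{tr}(\tau \mid M_\lambda)$, while $\dim M_\lambda = \langle s_\mu^2, s_\lambda \rangle = |\mathrm{EYTab}(\lambda, \overline{\mu}\mu)|$. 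In particular $\langle p_2 \circ s_\mu, s_\lambda \rangle$ automatically has the same parity as, and absolute value at most, $|\mathrm{EYTab}(\lambda, \overline{\mu}\mu)|$, which is consistent with the assertion; what remains is to pin the trace to the $\xi$-fixed tableaux.

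This has the shape of a cyclic sieving statement for the group $\langle \xi \rangle$ of order two. The plan is to exhibit a basis $\{ v_T : T \in \mathrm{EYTab}(\lambda, \overline{\mu}\mu) \}$ of $M_\lambda$ on which $\tau$ acts by $\tau(v_T) = \varepsilon_T\, v_{\xi(T)}$ with signs $\varepsilon_T \in \{\pm 1\}$; then the contributions of the free $\xi$-orbits cancel and $\mathrm{tr}(\tau \mid M_\lambda) = \sum_{T : \xi(T) = T} \varepsilon_T$, so if the $\varepsilon_T$ attached to $\xi$-fixed tableaux are all equal the theorem follows. The basis is not the obstacle: the crystal (canonical) basis of $V_\mu \otimes V_\mu$ descends to a basis of $M_\lambda$ indexed by the highest-weight elements of weight $\lambda$ in $B(\mu) \otimes B(\mu)$, which are in standard bijection with $\mathrm{EYTab}(\lambda, \overline{\mu}\mu)$; and because $\xi$ reverses the palindromic content $\overline{\mu}\mu$, interchanging the alphabets $\{1, \dots, m\}$ and $\{m+1, \dots, 2m\}$ (cf. Remark~\ref{evacdescends}), the underlying permutation of $\tau$ on this basis can be identified with $\xi$ using the Berenstein--Zelevinsky involution (equivalently, the Henriques--Kamnitzer crystal commutor together with the realization of the Lusztig involution by Sch\"utzenberger evacuation).

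The hard part will be controlling the signs $\varepsilon_T$: to show that they are globally constant on the $\xi$-fixed tableaux, equivalently that $\tau$ on $M_\lambda$ becomes $\pm$ a permutation matrix after a diagonal rescaling of the $v_T$. For this I would move to the equivalent domino-tableau picture invoked in the abstract: Garfinkle--Barbasch--Vogan domino insertion identifies $\mathrm{EYTab}(\lambda, \overline{\mu}\mu)$ with the set of semistandard domino tableaux of shape $\lambda$ and content $\mu$, and by the rule of Carr\'e and Leclerc the Schur coefficients of $h_2 \circ s_\mu$ and $e_2 \circ s_\mu$ enumerate those domino tableaux sorted by the parity of the spin statistic, so that $\langle p_2 \circ s_\mu, s_\lambda \rangle = \pm \sum_D (-1)^{\mathrm{sp}(D)}$. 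The plan then splits into (i) transporting $\xi$ through domino insertion to an explicit shape-preserving involution $\theta$ on domino tableaux, via van Leeuwen's analysis of edge sequences and domino insertion, and (ii) constructing a spin-parity-reversing involution on the domino tableaux not fixed by $\theta$, so that the alternating sum collapses to $\pm |\{ D : \theta(D) = D \}| = \pm |\{ T : \xi(T) = T \}|$. Step (ii) --- in particular the fact that the surviving, $\theta$-fixed domino tableaux must all carry spin of the same parity --- is the genuinely delicate point and the only place a new combinatorial construction is needed; the rest is assembly of known correspondences.
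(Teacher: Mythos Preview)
Your reduction to the trace of $\tau$ on the multiplicity space $M_\lambda$ is correct and pleasant, and the identification of the underlying permutation with $\xi$ via the crystal commutor is the right combinatorial picture. But the proposal does not actually close the sign problem; it only relocates it. Your step (ii) --- a spin-reversing involution on the domino tableaux not fixed by $\theta$ --- would yield $\sum_D (-1)^{\mathrm{sp}(D)} = \sum_{D:\theta(D)=D} (-1)^{\mathrm{sp}(D)}$, but to reach $\pm|\{D:\theta(D)=D\}|$ you still need the claim you flag parenthetically, that all $\theta$-fixed tableaux share the same spin parity. Neither of these two combinatorial facts is supplied, and neither is visibly easier than the theorem itself. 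Moreover, invoking Carr\'e--Leclerc here runs against the grain of the paper, which derives their rule \emph{from} Theorem~\ref{mainevac} (see the remarks following Corollary~\ref{corprom}); your argument would not be literally circular, since Carr\'e--Leclerc has an independent proof, but it would forfeit the paper's claim to an independent derivation.

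The paper's proof avoids the sign difficulty altogether by changing the ambient group. Rather than work with $\tau$ on a $GL_m(\mathbb{C})$-multiplicity space, it works inside a single irreducible $GL_{2m}(\mathbb{C})$-representation $V_{T',2m}$ of highest weight $\lambda$, equipped with Skandera's basis of Kazhdan--Lusztig immanants indexed by semistandard tableaux of shape $\lambda$. On this basis the long element $w_0 \in \mathfrak{S}_{2m}$ acts by $\xi$ up to the \emph{global} sign $(-1)^{v(\lambda)}$ (Theorem~\ref{liftevac}), a fact inherited from Stembridge's analysis of $w_0$ on Kazhdan--Lusztig cell modules; this is exactly the constant-sign statement you are missing, but obtained algebraically rather than bijectively. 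The proof then evaluates $\chi\bigl(w_0 \cdot \operatorname{diag}(x_1,\ldots,x_m,x_m,\ldots,x_1)\bigr)$ in two ways: once via the basis, which picks out the $\xi$-fixed tableaux and, after a crystal-theoretic decomposition of $B_\lambda$ as an $(\mathfrak{sl}_m \oplus \mathfrak{sl}_m)$-crystal (Corollary~\ref{evacconncompresbiject}), yields $\sum_\theta |\textnormal{EYTab}^{\xi}(\lambda,\overline{\theta}\theta)|\,s_\theta(x^{-2})$; and once by diagonalization, which gives $s_\lambda(x_1^{-1},\ldots,x_m^{-1},-x_1^{-1},\ldots,-x_m^{-1})$ and hence, via the adjoint $\varphi_2$, the inner products $\langle s_\lambda, p_2 \circ s_\theta\rangle$. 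Matching coefficients finishes. What this buys over your outline is that the sign uniformity is a theorem about canonical bases (Lemma~\ref{stdliftevac}), not an open combinatorial construction.
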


\begin{thm} \label{mainprom}
Let $\lambda$ be a rectangular partition, and let $\textnormal{PYTab}(\lambda, \mu^n)$ be the set of all semistandard tableaux of shape $\lambda$ and content $\mu^n$ with reading word Yamanouchi in the alphabets $\lbrace km+1, km+2, \ldots, (k+1)m \rbrace$ for all $0 \leq k \leq n-1$.  Let $j$ act on $\textnormal{PYTab}(\lambda, \mu^n)$ by $m$ iterations of \textnormal{jeu-de-taquin} promotion.  Then, for all positive integers $d$ dividing $n$, \[ | \lbrace T \in \textnormal{PYTab}(\lambda, \mu^n): j^{d}(T) = T \rbrace | = \pm \left \langle p_{n/d}^d \circ s_{\mu}, s_{\lambda} \right \rangle.\] 
\end{thm}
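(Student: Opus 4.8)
The plan is to realize the plethysm coefficient as the trace of a cyclic action on a multiplicity space and then to transport that action to $\mathrm{PYTab}(\lambda,\mu^n)$ through promotion. Fix an $n$-cycle $\sigma\in\mathfrak{S}_n$, and for a partition $\lambda$ of $n\lvert\mu\rvert$ let $W_\lambda$ denote the $\mathfrak{S}_n$-module $\bigoplus_{\nu\vdash n}\langle s_\nu\circ s_\mu,\,s_\lambda\rangle\,S^\nu$; equivalently, for $N$ large, $W_\lambda$ is the space of multiplicities of $\mathbb{S}_\lambda\mathbb{C}^N$ in $(\mathbb{S}_\mu\mathbb{C}^N)^{\otimes n}$, with $\mathfrak{S}_n$ permuting the tensor factors, the two descriptions agreeing because Schur--Weyl duality gives $(\mathbb{S}_\mu\mathbb{C}^N)^{\otimes n}\cong\bigoplus_\nu\mathbb{S}_\nu(\mathbb{S}_\mu\mathbb{C}^N)\otimes S^\nu$ and the character of $\mathbb{S}_\nu(\mathbb{S}_\mu\mathbb{C}^N)$ is $s_\nu\circ s_\mu$. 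The first step is the algebraic identity
\[ \langle p_{n/d}^d\circ s_\mu,\,s_\lambda\rangle \;=\; \mathrm{tr}\bigl(\sigma^d\mid W_\lambda\bigr). \]
Its proof is a short manipulation: since $d\mid n$, the element $\sigma^d$ has cycle type $(n/d)^d$, so $p_{n/d}^d=\sum_\nu\chi^\nu(\sigma^d)\,s_\nu$; composing with $s_\mu$ and pairing with $s_\lambda$ gives $\langle p_{n/d}^d\circ s_\mu,\,s_\lambda\rangle=\sum_\nu\chi^\nu(\sigma^d)\langle s_\nu\circ s_\mu,\,s_\lambda\rangle$, which is exactly $\mathrm{tr}(\sigma^d\mid W_\lambda)$. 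Taking $d=n$ recovers $\dim W_\lambda=\langle s_\mu^n,s_\lambda\rangle=\lvert\mathrm{PYTab}(\lambda,\mu^n)\rvert$ (using $\sum_\nu(\dim S^\nu)s_\nu=p_1^n$ and $p_1^n\circ s_\mu=s_\mu^n$). One may package the first step above together with the final step below as a cyclic sieving statement for the pair $(\mathrm{PYTab}(\lambda,\mu^n),\langle j\rangle)$ with polynomial $X_\lambda(q)=\sum_\nu\langle s_\nu\circ s_\mu,\,s_\lambda\rangle\,f^\nu(q)$, where $f^\nu$ is the fake-degree polynomial; then $X_\lambda(1)$ and the evaluations of $X_\lambda$ at roots of unity are forced by the identity above together with the standard fact that $f^\nu(e^{2\pi i d/n})=\chi^\nu(\sigma^d)$.

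The second step is to equip $W_\lambda$ with a distinguished basis indexed by $\mathrm{PYTab}(\lambda,\mu^n)$. I would obtain this from the crystal (equivalently, dual canonical) basis of $(\mathbb{S}_\mu\mathbb{C}^N)^{\otimes n}$: its highest-weight vectors of weight $\lambda$ form a basis of $W_\lambda$, and such a vector is encoded by a semistandard tableau of shape $\lambda$ whose reading word is Yamanouchi on each of the $n$ successive blocks of $m$ letters that record the tensor factors --- that is, by an element of $\mathrm{PYTab}(\lambda,\mu^n)$. Equivalently this passes through iterated Littlewood--Richardson fillings, i.e.\ saturated chains $\varnothing=\nu^{(0)}\subset\nu^{(1)}\subset\cdots\subset\nu^{(n)}=\lambda$ in which each skew shape $\nu^{(k)}/\nu^{(k-1)}$ carries a Littlewood--Richardson filling of content $\mu$.

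The heart of the argument, and the step I expect to be the main obstacle, is to show that when $\lambda$ is rectangular the generator $\sigma$ acts on this basis --- up to a single global sign --- as the promotion power $j$. This is the extension, from $\mu=(1)$ to arbitrary $\mu$, of Rhoades's theorem that the long cycle in $\mathfrak{S}_N$ realizes jeu-de-taquin promotion (up to sign) on the Kazhdan--Lusztig basis of an irreducible $\mathfrak{S}_N$-module of rectangular shape. I would prove it by transporting the Kazhdan--Lusztig (dual canonical) cellular structure from a tensor power $(\mathbb{C}^N)^{\otimes n\lvert\mu\rvert}$ containing $(\mathbb{S}_\mu\mathbb{C}^N)^{\otimes n}$ to the induced cellular structure on the multiplicity space $W_\lambda$ --- matching, via the compatibility of promotion with standardization and with the Yamanouchi restriction, the $m$-fold promotion power $j$ with the passage from the ambient long cycle to the block cycle that realizes $\sigma$ --- and then using rectangularity of $\lambda$ to show that left multiplication by the long cycle stabilizes each cell and intertwines it with promotion. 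The delicate points are: (i) checking that the cellular structure and the promotion operator descend compatibly through the standardization, so that the power $j$ rather than a single promotion is the relevant operator; (ii) controlling the sign uniformly, i.e.\ proving that $\sigma$ acts as a scalar $\varepsilon\in\{+1,-1\}$ times a genuine permutation matrix in the chosen basis, so that traces of powers of $\sigma$ are $\pm$ counts of fixed points of powers of $j$ and not alternating sums; and (iii) the verification (cf.\ Remark~\ref{promdescends}) that $j$ has order $n$ on $\mathrm{PYTab}(\lambda,\mu^n)$ and restricts there from the promotion action on all tableaux of shape $\lambda$ and content $\mu^n$, which is a known feature of promotion on rectangular shapes.

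Granting this, the theorem follows at once: $\sigma^d$ acts on the chosen basis of $W_\lambda$ as $\varepsilon^d$ times the permutation matrix of $j^d$ on $\mathrm{PYTab}(\lambda,\mu^n)$, whence
\[ \langle p_{n/d}^d\circ s_\mu,\,s_\lambda\rangle \;=\; \mathrm{tr}\bigl(\sigma^d\mid W_\lambda\bigr) \;=\; \varepsilon^d\,\bigl\lvert\,\{\,T\in\mathrm{PYTab}(\lambda,\mu^n) : j^{d}(T)=T\,\}\,\bigr\rvert, \]
which is the assertion of the theorem.
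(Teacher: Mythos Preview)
Your approach via the $\mathfrak{S}_n$-multiplicity space $W_\lambda$ is genuinely different from the paper's, and the algebraic identity $\langle p_{n/d}^d\circ s_\mu,s_\lambda\rangle=\mathrm{tr}(\sigma^d\mid W_\lambda)$ together with the indexing of a basis of $W_\lambda$ by $\mathrm{PYTab}(\lambda,\mu^n)$ are both correct and cleanly stated. The paper never works with $W_\lambda$ or the $\mathfrak{S}_n$-action on tensor factors at all; instead it stays inside the full $GL_{mn}(\mathbb{C})$-irreducible $V_{T',mn}$ with its Skandera basis, evaluates the character at $c_{mn}^{md}\cdot\operatorname{diag}(y_1,\ldots,y_d,\ldots,y_1,\ldots,y_d)$ in two ways (once via Theorem~\ref{liftprom} and the crystal decomposition of $B_\lambda$ as an $\mathfrak{sl}_m^{\oplus n}$-crystal, once via conjugation to a diagonal matrix and Rhoades's ribbon-tableau formula), and then matches coefficients of $s_\mu(y_1^{-n/d})\cdots s_\mu(y_d^{-n/d})$. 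The crystal step (Theorem~\ref{promhwcrystal} and Corollary~\ref{promconncompresbiject}) is what lets the paper pass from fixed points of $j^d$ on all of $B_\lambda$ to fixed points on each highest-weight component, and in particular on $\mathrm{PYTab}(\lambda,\mu^n)$; this is the substitute for your direct claim about the $\sigma$-action on a basis of $W_\lambda$.

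The gap in your proposal is exactly the step you flag as the main obstacle: that $\sigma$ acts on your chosen basis of $W_\lambda$ as a global sign times the permutation $j$. Your sketch --- transport the cellular structure from $(\mathbb{C}^N)^{\otimes n|\mu|}$, match the block cycle with $\sigma$, and invoke compatibility of promotion with standardization --- is plausible but not a proof, and the delicate points (i) and (ii) you list are real. In particular, relating the ambient long cycle $c_{n|\mu|}$ acting on a KL basis to the tensor-factor cycle $\sigma$ acting on $W_\lambda$ requires passing through a nontrivial projection, and controlling signs uniformly through that projection is not immediate. The paper circumvents this entirely: it never needs $\sigma$ to act by a signed permutation on $W_\lambda$, because it computes a $GL_{mn}$-character rather than an $\mathfrak{S}_n$-trace, and the permutation-up-to-sign statement it uses (Theorem~\ref{liftprom}) is Rhoades's established result applied directly. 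A side benefit is that the paper's computation pins down the sign explicitly as $\epsilon_{n/d}(\lambda)$, whereas your argument, even if completed, would leave the sign as an unspecified $\varepsilon^d$.
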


From Theorems 3.1 and 3.2 in Lascoux--Leclerc--Thibon \cite{Lascoux}, we see that the Hall--Littlewood symmetric function $Q'_{1^n}(q)$ specializes (up to sign) at $q = e^{\frac{2 \pi i \ell}{n}}$ to $p_{n/{\gcd(n,\ell)}}^{\gcd(n, \ell)}$.  Therefore, we may interpret Theorem~\ref{mainprom} as analogous to exhibiting an instance of the cyclic sieving phenomenon, and Theorem~\ref{mainevac} as analogous to exhibiting an instance of Stembridge's ``$q = -1$'' phenomenon (the progenitor of the cyclic sieving phenomenon for involutions; cf. \cite{Stembridge}).   

\begin{cor} \label{corevac}
Let $\xi$ act on $\textnormal{EYTab}(\lambda, \overline{\mu} {\mu})$ by the Sch\"utzenberger involution.  Then \[ | \lbrace T \in \textnormal{EYTab}(\lambda, \overline{\mu}{\mu}) : \xi(T) = T \rbrace | = \pm \left \langle Q'_{1^n}(-1), s_{\lambda} \right\rangle.\]  
\end{cor}

\begin{cor} \label{corprom}
Let $\lambda$ be a rectangular partition.  Let $j$ act on $\textnormal{PYTab}(\lambda, \mu^n)$ by $m$ iterations of \textnormal{jeu-de-taquin} promotion.  Then, for all integers $\ell$, \[| \lbrace T \in \textnormal{PYTab}(\lambda, \mu^n): j^{\ell}(T) = T \rbrace | = \pm \left \langle Q'_{1^n} \left(e^{\frac{2 \pi i \ell}{n}} \right) \circ s_{\mu}, s_{\lambda} \right \rangle.\]
\end{cor}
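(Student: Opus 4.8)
The plan is to deduce Corollary~\ref{corprom} directly from Theorem~\ref{mainprom} together with the Lascoux--Leclerc--Thibon specialization recorded just above its statement, the only real work being a group-theoretic reduction and some sign bookkeeping. The first step is to observe that, since $\lambda$ is rectangular, $j$ generates a cyclic group of order $n$ acting on $\textnormal{PYTab}(\lambda, \mu^n)$. Hence, for any integer $\ell$, the subgroup $\langle j^{\ell} \rangle$ coincides with $\langle j^{\gcd(n,\ell)} \rangle$, so a tableau $T$ is fixed by $j^{\ell}$ if and only if it is fixed by $j^{d}$, where $d := \gcd(n,\ell)$. (When $\ell \equiv 0 \pmod n$ one has $d = n$, $j^{d} = \mathrm{id}$, and every tableau is fixed; this is consistent with what follows.) Thus
\[ | \lbrace T \in \textnormal{PYTab}(\lambda, \mu^n): j^{\ell}(T) = T \rbrace | = | \lbrace T \in \textnormal{PYTab}(\lambda, \mu^n): j^{d}(T) = T \rbrace |. \]

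Next, because $d$ is a positive integer dividing $n$, Theorem~\ref{mainprom} applies and identifies the right-hand side with $\pm \langle p_{n/d}^{d} \circ s_{\mu}, s_{\lambda} \rangle$. It then remains only to replace $p_{n/d}^{d}$ by $Q'_{1^n}(e^{2\pi i \ell/n})$. By Theorems 3.1 and 3.2 of \cite{Lascoux}, the specialization $Q'_{1^n}(e^{2\pi i \ell/n})$ equals $\epsilon \, p_{n/d}^{d}$ for a sign $\epsilon \in \lbrace +1, -1 \rbrace$ depending only on $n$ and $\ell$; here I regard $Q'_{1^n}(q)$ as an element of $\Lambda \otimes \mathbb{Q}(q)$ whose value at a root of unity lies in $\Lambda \otimes \mathbb{C}$. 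Since $f \mapsto f \circ s_{\mu}$ extends to a $\mathbb{C}$-algebra homomorphism on $\Lambda \otimes \mathbb{C}$, it commutes with scalars, so
\[ Q'_{1^n}(e^{2\pi i \ell/n}) \circ s_{\mu} = \epsilon \, \bigl( p_{n/d}^{d} \circ s_{\mu} \bigr), \qquad \text{hence} \qquad \langle Q'_{1^n}(e^{2\pi i \ell/n}) \circ s_{\mu}, s_{\lambda} \rangle = \epsilon \, \langle p_{n/d}^{d} \circ s_{\mu}, s_{\lambda} \rangle. \]
Chaining the displayed equalities and absorbing $\epsilon$ into the ambient sign gives the claim.

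I do not expect any serious obstacle: the substance of the corollary is already packed into Theorem~\ref{mainprom} and the \cite{Lascoux} identity. The one input that must be invoked with care is that $j$ has order \emph{exactly} $n$ on $\textnormal{PYTab}(\lambda, \mu^n)$ for rectangular $\lambda$ — the promotion property already used in the statement of Theorem~\ref{mainprom} and in Remark~\ref{promdescends} — since this is what legitimizes the reduction $j^{\ell} \leftrightarrow j^{\gcd(n,\ell)}$. One should also note that the sign $\epsilon$ coming from \cite{Lascoux} is a global scalar, and therefore may be merged without comment with the sign already present in Theorem~\ref{mainprom}.
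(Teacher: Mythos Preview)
Your proposal is correct and matches the paper's own (implicit) argument: the paper does not give a separate proof of Corollary~\ref{corprom} but simply observes, in the sentence preceding it, that $Q'_{1^n}(q)$ specializes up to sign at $q=e^{2\pi i\ell/n}$ to $p_{n/\gcd(n,\ell)}^{\gcd(n,\ell)}$ by \cite{Lascoux}, and deduces the corollary from Theorem~\ref{mainprom}. The only minor imprecision in your write-up is the phrase ``order \emph{exactly} $n$'': what Theorem~\ref{uniqueprom} gives is $j^n=\mathrm{id}$, i.e.\ the order of $j$ divides $n$, but since $\gcd(n,\ell)$ is a $\mathbb{Z}$-linear combination of $n$ and $\ell$, one still has $\langle j^{\ell}\rangle=\langle j^{\gcd(n,\ell)}\rangle$ inside $\langle j\rangle$, and your reduction goes through unchanged.
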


\begin{rem} \rm
The signs appearing in Theorems~\ref{mainevac} and \ref{mainprom} are predictable, and depend upon $\lambda$, $d$, and $n$ only.  Consult section 4, which contains the proofs of these theorems, for more details.  
\end{rem}

Theorem~\ref{mainevac} does not give the first combinatorial expression for the coefficient $\langle p_2 \circ s_{\mu}, s_{\lambda} \rangle$, but it distinguishes itself from the existing Carr\'e--Leclerc formula by its natural compatibility with the Littlewood--Richardson rule, and it is sufficiently robust that the techniques involved in its derivation are applicable to a whole class of plethysm coefficients with $n > 2$, addressed in Theorem~\ref{mainprom}, which is new in content and in form. 

In contrast, the Carr\'e--Leclerc rule has not been generalized to plethysms of degree higher than $2$, for the concept of Yamanouchi reading words has not been extended to $n$-ribbon tableaux for $n \geq 3$.  

Furthermore, the author has shown in unpublished work that a bijection of Berenstein and Kirillov \cite{Berenstein} between domino tableaux and tableaux stable under evacuation restricts to a bijection between those tableaux specified in the Carr\'e--Leclerc rule and in Theorem~\ref{mainevac}, respectively.  It follows that Theorem~\ref{mainevac} recovers the Carr\'e--Leclerc result.    

\subsection{Characters}
To prove Theorems~\ref{mainevac} and \ref{mainprom}, we turn to the theory of Lusztig canonical bases, which provides an algebraic setting for the Sch\"utzenberger actions evacuation and promotion.  In particular, we consider an irreducible representation of $GL_{mn}(\mathbb{C})$ for which there exists a basis indexed by the semistandard tableaux of shape $\lambda$ with entries in $\lbrace 1, 2, \ldots, mn \rbrace$ such that, if $n=2$, the long element $w_0 \in \mathfrak{S_{mn}} \hookrightarrow GL_{mn}$ permutes the basis elements (up to sign) by evacuation, and, if $\lambda$ is rectangular, the long cycle $c_{mn} \in \mathfrak{S_{mn}} \hookrightarrow GL_{mn}$ permutes the basis elements (up to sign) by promotion.  

With a suitable basis in hand, we proceed to compute the character $\chi$ of the representation at a particular element of $GL_{mn}$.  If $n=2$, we compute \[ \chi(w_0 \cdot \operatorname{diag}(x_1, x_2, \ldots, x_m, x_m, \ldots, x_2, x_1)),\] and, if $\lambda$ is rectangular, we compute \[\chi(c_{mn}^{md} \cdot \operatorname{diag}(y_1, y_2, \ldots, y_d, y_1, y_2, \ldots, y_d, \ldots, y_1, y_2, \ldots, y_d)),\] where the block $\operatorname{diag}(y_1, y_2, \ldots, y_d)$ occurs $n/d$ times along the main diagonal, and $y_i$ in turn represents the block $\operatorname{diag}(y_{i,1}, y_{i,2}, \ldots, y_{i,m})$ for all $1 \leq i \leq d$.

These character evaluations pick out the fixed points of the relevant order-$n$ cyclic actions.  Furthermore, they may be calculated by diagonalization of the indicated elements, for characters are class functions, and the values of the irreducible characters of $GL_{mn}$ at diagonal matrices are well known.  A careful inspection of the resulting formulas yields the desired identities.  

The relationship between $w_0$ and evacuation was first discovered by Berenstein and Zelevinsky \cite{BZ} in 1996, in the context of a basis dual to Lusztig's canonical basis.  In this article, we opt for an essentially equivalent basis constructed by Skandera \cite{Skandera}, which was used by Rhoades to detect the analogous relationship between $c_{mn}$ and promotion.  From the observations that $w_0$ and $c_{mn}$ lift the actions of evacuation and promotion, respectively, with respect to the dual canonical basis (or something like it), Stembridge \cite{Stembridge} and Rhoades \cite{Rhoades} deduced correspondences between fixed points of Sch\"utzenberger actions and ribbon tableaux, which inspired our results.    

Recall that an $r$-ribbon tableau of shape $\lambda$ is a tiling of the Young diagram of $\lambda$ by connected skew diagrams with $r$ boxes that contain no $2 \times 2$ squares (referred to as $r$-ribbons), each labeled by a positive integer entry.  (Thus, $1$-ribbon tableaux are ordinary tableaux, and $2$-ribbon tableaux are domino tableaux.)  If the entries of the $r$-ribbons are weakly increasing across each row and strictly increasing down each column, the $r$-ribbon tableau is called semistandard, by analogy with the definition of ordinary semistandard tableaux.  

\begin{thm}[Stembridge \cite{Stembridge}, Corollary 4.2] \label{Stemevac}
Let $\textnormal{Tab}(\lambda, \overline{\mu}{\mu})$ be the set of all semistandard tableaux of shape $\lambda$ and content $\overline{\mu} \mu$, and let $\xi$ act on $\textnormal{Tab}(\lambda, \overline{\mu}{\mu})$ by the Sch\"utzenberger involution.  Then \[| \lbrace T \in \textnormal{Tab}(\lambda, \overline{\mu}{\mu}) : \xi(T) = T \rbrace |\] is the number of domino tableaux of shape $\lambda$ and content $\mu$.  
\end{thm}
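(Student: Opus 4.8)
The plan is to linearize the problem: I would realize the Sch\"utzenberger involution as the action of a group element on a representation, compute the relevant character in two different ways, and match coefficients. Let $V_{\lambda}$ be the irreducible polynomial representation of $GL_{2m}(\mathbb{C})$ of highest weight $\lambda$, and recall (Berenstein--Zelevinsky, reformulated through the dual canonical basis, or through the Skandera basis used below by Rhoades) that $V_{\lambda}$ carries a weight basis $\{b_{T}\}$ indexed by the semistandard tableaux $T$ of shape $\lambda$ with entries in $\{1, 2, \ldots, 2m\}$, on which a suitable representative in $GL_{2m}$ of the longest permutation $w_{0} \in \mathfrak{S}_{2m}$ acts by $w_{0} \cdot b_{T} = \epsilon_{T}\, b_{\xi(T)}$, where $\xi$ is evacuation and $\epsilon_{T} \in \{\pm 1\}$ depends only on $\lambda$ and on the content of $T$, not on $T$ itself.

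The first step is to evaluate $\chi_{\lambda}$ at $g := w_{0} \cdot \operatorname{diag}(x_{1}, \ldots, x_{m}, x_{m}, \ldots, x_{1})$. Because $\{b_{T}\}$ is a weight basis, the matrix of $g$ in this basis is a signed permutation matrix rescaled by the weights, so its trace receives a contribution precisely from those $T$ with $\xi(T) = T$, namely $\epsilon_{T}$ times the weight monomial of $T$. Since $\xi$-fixed tableaux have content invariant under reversal, every such content equals $\overline{\mu}\mu$ for a unique partition $\mu$, and under the chosen torus element it contributes a single monomial $\prod_{i} x_{i}^{2 a_{i}}$ with $(a_{i})$ a rearrangement of $\mu$; hence, factoring out the content-constant signs, the coefficient of that monomial in $\chi_{\lambda}(g)$ is $\pm\, |\{T \in \textnormal{Tab}(\lambda, \overline{\mu}\mu) : \xi(T) = T\}|$.

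The second step evaluates the same character geometrically. On the plane spanned by $e_{i}$ and $e_{2m+1-i}$ the element $g$ acts by $\left(\begin{smallmatrix} 0 & x_{i} \\ x_{i} & 0 \end{smallmatrix}\right)$, with eigenvalues $\pm x_{i}$, so $g$ is conjugate in $GL_{2m}(\mathbb{C})$ to $\operatorname{diag}(x_{1}, -x_{1}, \ldots, x_{m}, -x_{m})$, and, $\chi_{\lambda}$ being a class function, $\chi_{\lambda}(g) = s_{\lambda}(x_{1}, -x_{1}, \ldots, x_{m}, -x_{m})$. I then invoke the classical fact (Littlewood; the $2$-quotient specialization underlying the Stanton--White bijection) that this specialization vanishes unless the $2$-core of $\lambda$ is empty, in which case it equals $\epsilon(\lambda)\, s_{\lambda^{(0)}}(x_{1}^{2}, \ldots, x_{m}^{2})\, s_{\lambda^{(1)}}(x_{1}^{2}, \ldots, x_{m}^{2})$, where $(\lambda^{(0)}, \lambda^{(1)})$ is the $2$-quotient of $\lambda$ and $\epsilon(\lambda) \in \{\pm 1\}$. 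By the Stanton--White correspondence between domino tableaux of shape $\lambda$ and pairs of ordinary tableaux of shapes $\lambda^{(0)}, \lambda^{(1)}$, the product $s_{\lambda^{(0)}}(y)\, s_{\lambda^{(1)}}(y)$ equals the generating function $\sum_{D} \prod_{i} y_{i}^{d_{i}(D)}$ over domino tableaux $D$ of shape $\lambda$, where $d_{i}(D)$ counts the dominoes labeled $i$. Extracting the coefficient of $\prod_{i} x_{i}^{2 a_{i}}$ and comparing with the first step gives $\pm\, |\{T : \xi(T) = T\}| = \epsilon(\lambda) \cdot \#\{\text{domino tableaux of shape } \lambda, \text{ content } \mu\}$ (the domino count depending only on the partition type of the exponent, as the generating function is symmetric); since both sides are non-negative integers and the prefactors are signs, they coincide, which is the claim.

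The principal obstacle is the sign bookkeeping concentrated in the canonical-basis input. Establishing that $w_{0}$ acts by $\pm\xi$ on the chosen basis, and in particular that $\epsilon_{T}$ is constant on the $\xi$-fixed tableaux of a fixed content --- so that no cancellation occurs when one reads off the coefficient of a monomial --- requires a careful choice of lift of $w_{0}$ into $GL_{2m}$ together with a compatibility argument between Lusztig's involution and the bar involution on the canonical basis; this is the technical heart of Stembridge's proof. Once that is in place, the remaining ingredients --- the $2 \times 2$ eigenvalue computation, the Littlewood specialization formula, and the Stanton--White bijection --- are routine.
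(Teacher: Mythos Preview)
Your proposal is correct and follows precisely the approach of Stembridge (which this paper itself adapts in its proof of Theorem~\ref{mainevac}): lift $\xi$ to the action of $w_0$ on a canonical-type basis, compute the trace of $w_0$ times a palindromic diagonal matrix both combinatorially and by diagonalization, and identify the domino generating function via the $2$-quotient / Stanton--White bijection. One small slip worth flagging: not every palindromic content is $\overline{\mu}\mu$ for a \emph{partition} $\mu$ (e.g.\ $(2,1,1,2)$ is not), but this is harmless since distinct palindromic contents yield distinct monomials under your torus element, so extracting the coefficient of $\prod_i x_i^{2\mu_{m+1-i}}$ still isolates exactly the fixed-point count you want.
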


\begin{thm}[Rhoades \cite{Rhoades}, proof of Theorem 1.5] \label{Rhoadesprom}
Let $\lambda$ be a rectangular partition, and let $\textnormal{Tab}(\lambda, \mu^n)$ be the set of all semistandard tableaux of shape $\lambda$ and content $\mu^n$.  Let $j$ act on $\textnormal{Tab}(\lambda, \mu^n)$ by $m$ iterations of \textnormal{jeu-de-taquin} promotion.  Then, for all positive integers $d$ dividing $n$, \[| \lbrace T \in \textnormal{Tab}(\lambda, \mu^n) : j^d(T) = T \rbrace |\] is the number of $(n/d)$-ribbon tableaux of shape $\lambda$ and content $\mu^d$.  
\end{thm}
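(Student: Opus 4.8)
The plan is to translate the fixed-point count into the value of an irreducible $GL_{mn}$-character at a carefully chosen element, evaluate that character by diagonalization, and recognize the answer via a classical identity of Littlewood expressing a Schur function at a ``necklace'' specialization through ribbon-tableaux enumeration.

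First I would set up the representation-theoretic dictionary recalled in the introduction. Let $V^{\lambda}$ be the irreducible polynomial representation of $GL_{mn}(\mathbb{C})$ of highest weight $\lambda$, equipped with the Skandera/dual-canonical basis $\{v_{T}\}$ indexed by semistandard tableaux $T$ of shape $\lambda$ with entries in $\{1,\dots,mn\}$, on which the long cycle $c_{mn}\in\mathfrak{S}_{mn}\hookrightarrow GL_{mn}$ acts by $c_{mn}\cdot v_{T}=\pm v_{\mathrm{pr}(T)}$, where $\mathrm{pr}$ is a single jeu-de-taquin promotion. Since $j=\mathrm{pr}^{m}$ we get $c_{mn}^{md}\cdot v_{T}=\pm v_{j^{d}(T)}$; as $\mathrm{pr}$ cyclically shifts the content and $\mu^{n}$ is unchanged by a shift of $m$ (hence of $md$), the operator $c_{mn}^{md}$ stabilizes the weight space $V^{\lambda}[\mu^{n}]$, whose basis is $\{v_{T}:T\in\textnormal{Tab}(\lambda,\mu^{n})\}$, and acts there by $\pm j^{d}$. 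Because $\lambda$ is rectangular, the signs in $c_{mn}^{md}\cdot v_{T}=\varepsilon_{T}\,v_{j^{d}(T)}$ are constant on promotion orbits (this is Rhoades's refinement for rectangular shapes), so
\[
\operatorname{tr}\!\bigl(c_{mn}^{md}\big|_{V^{\lambda}[\mu^{n}]}\bigr)=\sum_{T:\,j^{d}(T)=T}\varepsilon_{T}=\pm\,|\{T\in\textnormal{Tab}(\lambda,\mu^{n}):j^{d}(T)=T\}|.
\]

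Next I would extract this trace from the full character. For $D=\operatorname{diag}(z_{1},\dots,z_{mn})$ one has $\chi_{V^{\lambda}}(c_{mn}^{md}D)=s_{\lambda}$ applied to the eigenvalues of $c_{mn}^{md}D$. The permutation $i\mapsto i+md$ of $\{1,\dots,mn\}$ has exactly $md$ orbits, each of size $r:=n/d$; writing $Z_{a}:=\prod_{t=0}^{r-1}z_{a+tmd}$ for the orbit product with representative $a\in\{1,\dots,md\}$, the matrix $c_{mn}^{md}D$ is block-cyclic with eigenvalue multiset $\{\,w_{a}\zeta^{k}:1\le a\le md,\ 0\le k<r\,\}$, where $\zeta=e^{2\pi i/r}$ and $w_{a}$ is an $r$-th root of $Z_{a}$. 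On the other hand, expanding $\operatorname{tr}(c_{mn}^{md}D)$ over the basis $\{v_{T}\}$ and using that a $j^{d}$-fixed $T$ has $md$-periodic content $\operatorname{cont}(T)$ gives $\chi_{V^{\lambda}}(c_{mn}^{md}D)=\sum_{T:\,j^{d}(T)=T}\varepsilon_{T}\prod_{a=1}^{md}w_{a}^{\,r\cdot\operatorname{cont}(T)_{a}}$. Reading off the coefficient of the monomial $\prod_{a=1}^{md}w_{a}^{\,r(\mu^{d})_{a}}$ — here $(\mu^{d})_{a}$ is the $a$-th entry of the length-$md$ composition $\mu$ repeated $d$ times — recovers $\sum_{T\in\textnormal{Tab}(\lambda,\mu^{n}),\,j^{d}(T)=T}\varepsilon_{T}$, since these are exactly the fixed $T$ with $\operatorname{cont}(T)=\mu^{n}$.

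Finally I would invoke Littlewood's identity: for a primitive $r$-th root of unity $\zeta$ and variables $w_{1},\dots,w_{N}$,
\[
s_{\lambda}\bigl(\{\,w_{a}\zeta^{k}:1\le a\le N,\ 0\le k<r\,\}\bigr)=\operatorname{sgn}_{r}(\lambda)\sum_{T'}\ \prod_{a=1}^{N}w_{a}^{\,r\cdot\operatorname{cont}(T')_{a}},
\]
the sum over semistandard $r$-ribbon tableaux $T'$ of shape $\lambda$ with entries in $\{1,\dots,N\}$ and $\operatorname{sgn}_{r}(\lambda)\in\{\pm1\}$ the usual $r$-quotient sign (both sides vanishing unless $\lambda$ has empty $r$-core; equivalently the right side is $\operatorname{sgn}_{r}(\lambda)\prod_{i}s_{\lambda^{(i)}}(w_{1}^{r},\dots,w_{N}^{r})$ for the $r$-quotient $(\lambda^{(i)})$). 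Taking $N=md$ and $r=n/d$ and reading off the coefficient of $\prod_{a}w_{a}^{\,r(\mu^{d})_{a}}$ yields $\operatorname{sgn}_{n/d}(\lambda)$ times the number of $(n/d)$-ribbon tableaux of shape $\lambda$ and content $\mu^{d}$. Equating this with $\sum_{T\in\textnormal{Tab}(\lambda,\mu^{n}),\,j^{d}(T)=T}\varepsilon_{T}$ from the previous step and passing to absolute values — using once more that all $\varepsilon_{T}$ coincide for rectangular $\lambda$ — gives the claimed equality (and incidentally pins the sign as $\operatorname{sgn}_{n/d}(\lambda)$).

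The only genuinely hard ingredient is the one imported from Rhoades: that the long cycle $c_{mn}$ acts on the Skandera/dual-canonical basis of $V^{\lambda}$ as a signed permutation matrix realizing promotion, together with the fact that for rectangular $\lambda$ those signs are constant on promotion orbits. Everything downstream is routine, the only bookkeeping care being the $md$-periodicity of the weights that survive $c_{mn}^{md}$ and the branch choices $w_{a}=Z_{a}^{1/r}$, which one avoids altogether by specializing the $z_{i}$ to positive reals or by treating the eigenvalue multiset formally.
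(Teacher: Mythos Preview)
Your proposal is correct and follows essentially the same route as the argument the paper reprises from Rhoades (and deploys in its proof of Theorem~\ref{mainprom}): realize $c_{mn}^{md}$ on the Skandera/Kazhdan--Lusztig immanant basis as a signed permutation by $\operatorname{pr}^{md}$ (the paper's Theorem~\ref{liftprom}, with the sign computed to be the global constant $\zeta^{d v(\lambda)}$ on any $c_{mn}^{md}$-invariant weight space), evaluate the character at $c_{mn}^{md}\cdot\operatorname{diag}(\ldots)$ both via the basis and via diagonalization, and invoke the Littlewood/ribbon-tableaux identity (the paper's Theorem~\ref{lascoux}, equivalently Rhoades's Lemma~6.2) to read off the fixed-point count as a monomial coefficient. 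The only cosmetic difference is that the paper works in the dual representation $V_{T',mn}$, so the Schur polynomial appears in the inverse variables and the indexing partition is conjugated, but the substance of the argument is identical.
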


Unfortunately, the proofs of Theorems~\ref{Stemevac} and \ref{Rhoadesprom} cannot be directly adapted to obtain Theorems~\ref{mainevac} and \ref{mainprom}.  In order for the Yamanouchi restrictions on our tableaux sets to be made to appear in our character evaluations, an additional point of subtlety is needed.  We find relief in the insights offered us by the theory of Kashiwara crystals, which provides a framework not only for the study of the Sch\"utzenberger actions, but also for the reformulation of the Yamanouchi restrictions in terms of natural operators on semistandard tableaux.

\subsection{Crystals}
Let $\mathfrak{g}$ be a complex reductive Lie algebra with simply laced root system $\Phi$, and choose a set of simple roots $\lbrace \alpha_1, \alpha_2, \ldots, \alpha_t \rbrace$.  Let $P$ be the weight lattice of $\mathfrak{g}$.  A $\mathfrak{g}$-crystal is a finite set $B$ equipped with a weight map $\operatorname{wt} \colon B \rightarrow P$ and a pair of raising and lowering operators $e_i, f_i \colon B \rightarrow B \sqcup \lbrace 0 \rbrace$ for each $i$ that obey certain conditions.  Most notably, for all $b \in B$, if $e_i(b)$ is nonzero, then $\operatorname{wt}(e_i (b)) = \operatorname{wt}(b) + \alpha_i$, and if $f_i (b)$ is nonzero, then $\operatorname{wt}(f_i (b)) = \operatorname{wt}(b) - \alpha_i$.  

If $\mathfrak{g} = \mathfrak{gl}_{mn}$, then we may identify $P$ with $\mathbb{Z}^{mn}$ and choose for the simple roots the vectors $E_i - E_{i+1}$ for all $1 \leq i \leq mn-1$, where $E_i$ denotes the $i^\text{th}$ standard basis vector for all $1 \leq i \leq mn$.  If we take $B$ to be the set of semistandard tableaux of shape $\lambda$ with entries in $\lbrace 1, 2, \ldots, mn \rbrace$, with the weight of each tableau given by its content, there exists a suitable choice of operators $e_i$ and $f_i$ so that $B$ assumes the structure of a $\mathfrak{g}$-crystal.  Furthermore, the word of a tableau $b \in B$ is Yamanouchi with respect to the letters $i$ and $i+1$ if and only if $e_i$ vanishes at $b$, and anti-Yamanouchi with respect to $i$ and $i+1$ if and only if $f_i$ vanishes at $b$.  From this vantage point, it is easy to see that evacuation and promotion act on the tableaux sets indicated in our main theorems, for they (essentially) act on the set of crystal operators by conjugation.  

We close the introduction with an outline of the rest of the article.  In section 2, we provide the requisite background on tableaux and symmetric functions.  After reviewing the rudimentary definitions, we introduce plethysms, and we end with the observation of Lascoux, Leclerc, and Thibon \cite{Lascoux} that the classical relationship between tableaux and Schur functions evinces a more general relationship between ribbon tableaux and power-sum plethysms of Schur functions.  In section 3, we define Kashiwara crystals for a simply-laced complex reductive Lie algebra, before specializing to the $\mathfrak{gl}_{mn}$ setting, where we show how to assign a crystal structure to the pertinent tableaux sets.  We also examine the interactions between the Sch\"utzenberger actions and the raising and lowering crystal operators.  Because both of these sections are expository, we strive for brevity, but an earlier version of this work \cite{Rush} contains an expanded treatment.  

Finally, in section 4, we present proofs of Theorems~\ref{mainevac} and \ref{mainprom}.  Here the Berenstein--Zelevinsky \cite{BZ} and Rhoades \cite{Rhoades} lemmas underlying the proofs of Theorems~\ref{Stemevac} and \ref{Rhoadesprom} are summarized in the statement of Theorem~\ref{skanderabases}.  

\section{Tableaux and Symmetric Function Background}
In this section, we discuss the basic facts about Young tableaux and symmetric functions that are necessary for this article to be understood and placed in its proper context.\footnote{More comprehensive accounts of the fundamentals can be found in Stanley \cite{ec2}, Chapter 7 or Fulton \cite{Fulton}, Chapters 1-6 (of the two treatments, Fulton's is the more leisurely).  For more on the combinatorics of tableaux, see James--Kerber \cite{James}.  For more on plethysms, a reference \textit{par excellence} is Macdonald \cite{Macdonald} (but the presentation is considerably more abstract).}  We begin with the definition of a semistandard tableau.  

\begin{df}
Let $\kappa$ be a partition of $k$, and let $\eta = (\eta_1, \eta_2, \ldots, \eta_t)$ be a composition of $k$.  A \textit{semistandard Young tableau} of \textit{shape} $\kappa$ and \textit{content} $\eta$ is a filling of a Young diagram of shape $\kappa$ by positive integer entries, with one entry in each box, such that the entries are weakly increasing across each row and strictly increasing down each column, and such that the integer $i$ appears as an entry $\eta_i$ times for all $1 \leq i \leq t$.  A semistandard tableau of shape $\kappa$ and content $\eta$ is \textit{standard} if $\eta_i = 1$ for all $1 \leq i \leq k$.  
\end{df}

\begin{df}
Let $\iota$ and $\kappa$ be partitions such that $\iota_i \leq \kappa_i$ for all positive parts $\iota_i$ of $\iota$.  Let $\eta=(\eta_1, \eta_2, \ldots, \eta_t)$ be a composition of $|\kappa / \iota|$.  A \textit{semistandard skew tableau} of \textit{shape} $\kappa / \iota$ and \textit{content} $\eta$ is a filling of a skew diagram of shape $\kappa / \iota$ by positive integer entries, with one entry in each box, such that the entries are weakly increasing across each row and strictly increasing down each column, and such that the integer $i$ appears as an entry $\eta_i$ times for all $1 \leq i \leq t$.  
\end{df}

An \textit{$r$-ribbon} is a connected skew diagram of area $r$ that contains no $2 \times 2$ block of squares.  Given a partition $\kappa$ of $k$, we say that the \textit{$r$-core} of $\kappa$ is empty if there exists a tiling of a Young diagram of shape $\kappa$ by $r$-ribbons (cf. James--Kerber \cite{James}).  Such a tiling is referred to as an \textit{$r$-ribbon diagram} of \textit{shape} $\kappa$.  For the $r$-core of $\kappa$ to be empty, $r$ must divide $k$, but the converse is not true.  

\begin{df}
Let $\kappa$ be a partition of $k$, and suppose that the $r$-core of $\kappa$ is empty.  Let $\eta = (\eta_1, \eta_2, \ldots, \eta_t)$ be a composition of $\frac{k}{r}$.  A \textit{semistandard $r$-ribbon tableau} of \textit{shape} $\kappa$ and \textit{content} $\eta$ is a filling of an $r$-ribbon diagram of shape $\kappa$ by positive integer entries, with one entry in each $r$-ribbon, such that the entries are weakly increasing across each row and strictly increasing down each column, and such that the integer $i$ appears as an entry $\eta_i$ times for all $1 \leq i \leq t$.  
\end{df}

To each semistandard tableau, we may associate a word that contains all the entries of the tableau, called the reading word.  

\begin{df}
Given a semistandard tableau $T$, the \textit{reading word} of $T$, which we denote by $w(T)$, is the word obtained by reading the entries of $T$ from bottom to top in each column, beginning with the leftmost column, and ending with the rightmost column.  
\end{df}

If $T$ is a tableau of shape $\kappa \vdash k$ and content $\eta = (\eta_1, \eta_2, \ldots, \eta_t)$, then $w(T)$ is a word of length $k$ on the alphabet $\lbrace 1, 2, \ldots, t \rbrace$, and the integer $i$ appears as a letter $\eta_i$ times for all $1 \leq i \leq t$.  

The reading words of the tableaux specified in our main theorems, as well as in the Littlewood--Richardson rule, are characterized by properties named for Yamanouchi.  

\begin{df}
A word $w = w_1 w_2 \cdots w_k$ on the alphabet $\lbrace 1, 2, \ldots, t \rbrace$ is \textit{Yamanouchi} (\textit{anti-Yamanouchi}) with respect to the integers $i$ and $i+1$ if, when it is read backwards from the end to any letter, the resulting sequence $w_k, w_{k-1}, \ldots, w_j$ contains at least (at most) as many instances of $i$ as of $i+1$.  
\end{df}

\begin{df}
A word $w$ on the alphabet $\lbrace 1, 2, \ldots, t \rbrace$ is \textit{Yamanouchi} (\textit{anti-Yamanouchi}) in the subset $\lbrace i, i+1, \ldots, i' \rbrace$ if it is Yamanouchi (anti-Yamanouchi) with respect to each pair of consecutive integers in $\lbrace i, i+1, \ldots, i' \rbrace$.  
\end{df}

That concludes our litany of combinatorial definitions.  We turn to a brief overview of symmetric polynomials and symmetric functions.  

Let $\Lambda_m$ be the ring of symmetric polynomials in $m$ variables, and let $\Lambda$ be the ring of symmetric functions.  

\begin{df} \label{Schur}
Let $\kappa$ be a partition of a positive integer $k$.  For all compositions $\eta$ of $k$, we denote the monomial $x_1^{\eta_1} x_2^{\eta_2} \cdots$ by $x^{\eta}$, and, for all tableaux $T$ of shape $\kappa$ and content $\eta$, we write $x^T$ for $x^{\eta}$.  The \textit{Schur function} associated to $\kappa$ in the variables $x_1, x_2, \ldots$ is $s_{\kappa} := \sum_{T} x^{T}$, where the sum ranges over all tableaux $T$ of shape $\kappa$.  For all $m$, the \textit{Schur polynomial} associated to $\kappa$ in the $m$ variables $x_1, x_2, \ldots, x_m$ is $s_{\kappa}(x_1, x_2, \ldots, x_m)$.  
\end{df}

It is well known that the Schur polynomials in $m$ variables associated to partitions with at most $m$ positive parts form a basis for $\Lambda_m$, and that the Schur functions form a basis for $\Lambda$.  We define an inner product on $\Lambda$ by decreeing that the Schur basis be orthonormal.  

\begin{df}
Let $\langle \text{ }, \text{ } \rangle \colon \Lambda \times \Lambda \rightarrow \mathbb{Z}$ be an inner product given by $\langle s_{\iota}, s_{\kappa} \rangle = \delta_{\iota, \kappa}$ for all partitions $\iota$ and $\kappa$, where $\delta_{\iota, \kappa}$ denotes the Kronecker delta.  
\end{df}

Thus, if $f$ is a symmetric function, there exists a unique expression for $f$ as a linear combination of Schur functions, and the coefficients are given by the inner product: $f = \sum_{\kappa} \langle f, s_{\kappa} \rangle s_{\kappa}$.  We refer to the sum as the \textit{expansion} of $f$ on the Schur basis, and to the inner products $\langle f, s_{\kappa} \rangle$ as the \textit{expansion coefficients}.  

Symmetric polynomials in multiple variable sets may be expanded as sums of products of Schur polynomials in the constituent variable sets, with the expansion coefficients being uniquely determined because the products of Schur polynomials form a basis for the multiple-variable-set symmetric-polynomial ring.  These expansion coefficients are also given by symmetric function inner products (as a consequence of the self-biorthogonality of the Schur basis, which entails that the structure constants of multiplication and comultiplication in $\Lambda$ with respect to the Schur basis coincide).  

\begin{thm} \label{alphabetunique}
Let $d$ be a positive integer, and let \[ \lbrace a_{1,1}, a_{1,2}, \ldots, a_{1,m_1} \rbrace, \lbrace a_{2,1}, a_{2,2}, \ldots, a_{2,m_2} \rbrace, \ldots, \lbrace a_{d,1}, a_{d,2}, \ldots, a_{d,m_d} \rbrace \] be a collection of $d$ variable sets denoted by $a_1, a_2, \ldots, a_d$, respectively.  Let $\theta_1, \theta_2, \ldots, \theta_d$ range over all $d$-tuples of partitions.  Then the set of products $\lbrace s_{\theta_1}(a_1) s_{\theta_2}(a_2) \cdots s_{\theta_d}(a_d) \rbrace_{\theta_1, \theta_2, \ldots, \theta_d}$ constitutes a basis for the ring of symmetric polynomials in the variable sets $a_1, a_2, \ldots, a_d$, and, for all $f \in \Lambda$, \[f(a_1, a_2, \ldots, a_d) = \sum_{\theta_1, \theta_2, \ldots, \theta_d} \langle f, s_{\theta_1} s_{\theta_2} \cdots s_{\theta_d} \rangle s_{\theta_1}(a_1) s_{\theta_2}(a_2) \cdots s_{\theta_d}(a_d).\]
\end{thm}

\begin{proof}
The proof is by induction on $d$.  The base case $d = 2$ is proven in Chapter 7, Section 15 of Stanley \cite{ec2} (cf. Equation 7.66).  The inductive step is handled identically.  

(Stanley \cite{ec2} addresses the Hopf algebra interpretation of the result in Equation 7.67.)
\end{proof}

Finally, we come to the definition of plethysm, taken from Macdonald \cite{Macdonald}.  

\begin{df}\label{plethysm}
Let $f, g \in \Lambda$, and let $g$ be written as a sum of monomials, so that $g = \sum_{\eta} u_{\eta} x^{\eta}$, where $\eta$ ranges over an infinite set of compositions.  Let $\lbrace y_i \rbrace_{i=1}^{\infty}$ be a collection of proxy variables defined by $\prod_{i=1}^{\infty} (1+y_it) = \prod_{\eta} (1+x^{\eta}t)^{u_{\eta}}$.  The \textit{plethysm} of $f$ and $g$, which we denote by $f \circ g$, is the symmetric function $f(y_1, y_2, \ldots)$.  
\end{df}

\begin{rem}
Although the relation $\prod_{i=1}^{\infty} (1+y_it) = \prod_{\eta} (1+x^{\eta}t)^{u_{\eta}}$ only determines the elementary symmetric functions in the variables $y_1, y_2, \ldots$, it is well known that the ring of symmetric functions is generated as a $\mathbb{Z}$-algebra by the elementary symmetric functions, so the plethysm $f \circ g = f(y_1, y_2, \ldots)$ is indeed well-defined.  
\end{rem}

The following observation follows immediately from Definition~\ref{plethysm}.  

\begin{prop}
For all $f \in \Lambda$, the map $\Lambda \rightarrow \Lambda$ given by $g \mapsto g \circ f$ is a ring homomorphism.  
\end{prop}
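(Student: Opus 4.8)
The plan is to recognize the plethystic substitution $g \mapsto g \circ f$ as the unique ring homomorphism out of the polynomial ring $\Lambda = \mathbb{Z}[e_1, e_2, \ldots]$ determined by sending each generator $e_k$ to the $k$-th elementary symmetric function in the proxy variables associated to $f$. First I would fix $f \in \Lambda$ and write $f = \sum_\eta u_\eta x^\eta$ as in Definition~\ref{plethysm}, with proxy variables $y_1, y_2, \ldots$ determined by $\prod_{i \geq 1}(1+y_it) = \prod_\eta (1+x^\eta t)^{u_\eta}$. For each $k \geq 1$ I set $a_k := [t^k] \prod_\eta (1+x^\eta t)^{u_\eta} \in \Lambda$; this is exactly the element $e_k(y_1, y_2, \ldots)$ singled out in the remark following Definition~\ref{plethysm}.

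Next I would invoke the fact (the same one cited in that remark) that $\Lambda$ is the polynomial ring over $\mathbb{Z}$ on the indeterminates $e_1, e_2, \ldots$. By the universal property of polynomial rings, the commutative ring $\Lambda$ together with the chosen elements $a_1, a_2, \ldots \in \Lambda$ yields a unique ring homomorphism $\varphi \colon \Lambda \to \Lambda$ with $\varphi(e_k) = a_k$ for all $k \geq 1$; in particular $\varphi$ preserves sums, products, and the multiplicative unit.

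It then remains to identify $\varphi$ with the map $g \mapsto g \circ f$. Given $g \in \Lambda$, I would write $g = P(e_1, e_2, \ldots)$ for the unique polynomial $P$ over $\mathbb{Z}$ expressing $g$ in the elementary symmetric functions. By definition $g \circ f = g(y_1, y_2, \ldots)$, and since evaluation at the proxy variables means substituting $e_k(y_1, y_2, \ldots) = a_k$ into this polynomial expression, one gets $g \circ f = P(a_1, a_2, \ldots) = \varphi(g)$. Hence $g \mapsto g \circ f$ equals $\varphi$ and is a ring homomorphism.

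I do not anticipate a genuine obstacle: the only delicate point is the precise meaning of the substitution $g \mapsto g(y_1, y_2, \ldots)$, and this has already been settled by the remark after Definition~\ref{plethysm} (the elementary symmetric functions of the proxy variables lie in $\Lambda$, and $\Lambda$ is freely generated by $e_1, e_2, \ldots$). Once that is granted, the homomorphism property is automatic, since evaluation of polynomials at a fixed tuple of elements of a commutative ring is always a ring homomorphism.
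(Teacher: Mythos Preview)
Your argument is correct and is precisely a careful unpacking of what the paper means by ``follows immediately from Definition~\ref{plethysm}'': the paper gives no further proof beyond that remark, relying (as you do) on the fact noted after Definition~\ref{plethysm} that $\Lambda$ is freely generated over $\mathbb{Z}$ by the $e_k$ and that the proxy-variable relation pins down $e_k(y_1,y_2,\ldots)\in\Lambda$. Your use of the universal property of the polynomial ring to identify $g\mapsto g\circ f$ with the induced homomorphism is exactly the intended mechanism.
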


There exists a family of symmetric functions for which the other choice of map given by plethysm, i.e. $g \mapsto f \circ g$, is also a ring homomorphism, for all $f$ belonging to this family.  

\begin{df}
For all positive integers $k$, the $k^\text{th}$ \textit{power-sum symmetric function} in the variables $x_1, x_2, \ldots$ is $p_k := x_1^k+x_2^k+ \cdots$.  
\end{df}

\begin{prop} \label{plethyswitch}
Let $g \in \Lambda$, and let $k$ be a positive integer.  Then $p_k \circ g = g \circ p_k = g(x_1^k, x_2^k, \ldots)$.  
\end{prop}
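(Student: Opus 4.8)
The plan is to prove the two equalities $g \circ p_k = g(x_1^k, x_2^k, \ldots)$ and $p_k \circ g = g(x_1^k, x_2^k, \ldots)$ separately, working straight from Definition~\ref{plethysm}, and then combine them.

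First I would handle $g \circ p_k$. Here the outer function is $g$ and the inner function is $p_k = x_1^k + x_2^k + \cdots$, so in the notation of Definition~\ref{plethysm} the monomials of the inner function are exactly the $x^\eta$ with $\eta$ a standard basis vector scaled by $k$, each occurring with coefficient $1$. The defining relation for the proxy variables then reads $\prod_{i} (1 + y_i t) = \prod_{i} (1 + x_i^k t)$, which is precisely the assertion that $e_j(y_1, y_2, \ldots) = e_j(x_1^k, x_2^k, \ldots)$ for every $j \geq 1$. Since $\Lambda$ is generated as a ring by the elementary symmetric functions (the fact already invoked in the remark following Definition~\ref{plethysm}), I would write $g$ as a polynomial in the $e_j$ and substitute, obtaining $g(y_1, y_2, \ldots) = g(x_1^k, x_2^k, \ldots)$; the left side is $g \circ p_k$ by definition, so this case is done.

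Next I would handle $p_k \circ g$. Now the outer function is $p_k$ and the inner function is a general $g = \sum_{\eta} u_\eta x^\eta$, so the proxy variables satisfy $\prod_i (1 + y_i t) = \prod_\eta (1 + x^\eta t)^{u_\eta}$. Taking the formal logarithm of both sides, using $\log(1 + zt) = \sum_{j \geq 1} \tfrac{(-1)^{j-1}}{j} z^j t^j$, and comparing coefficients of $t^j$ gives $p_j(y_1, y_2, \ldots) = \sum_\eta u_\eta (x^\eta)^j$ for all $j \geq 1$; on the left I am using the standard generating-function identity $\log \prod_i (1 + y_i t) = \sum_{j \geq 1} \tfrac{(-1)^{j-1}}{j} p_j(y_1, y_2, \ldots)\, t^j$, and $p_j(y_1, y_2, \ldots)$ is interpreted as $p_j$ expressed through the $e_i(y)$, which is legitimate for the same reason as above. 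Specializing to $j = k$ and noting that $(x^\eta)^k = (x_1^{\eta_1} x_2^{\eta_2} \cdots)^k = (x_1^k)^{\eta_1} (x_2^k)^{\eta_2} \cdots$, I recognize $\sum_\eta u_\eta (x^\eta)^k = g(x_1^k, x_2^k, \ldots)$, while $p_k(y_1, y_2, \ldots) = p_k \circ g$ by definition. Combining the two cases yields the proposition.

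The computations involved are entirely routine, and I do not anticipate a genuine obstacle. The only point requiring care is that Definition~\ref{plethysm} determines the proxy variables $y_i$ only via their elementary symmetric functions, so each step must be cast in terms of symmetric functions of the $y_i$ (equivalently, the $e_j(y)$). This is exactly what licenses the passage from $\prod_i(1 + y_i t) = \prod_i(1 + x_i^k t)$ to $g(y_1, y_2, \ldots) = g(x_1^k, x_2^k, \ldots)$ in the first case, and it is what makes sense of $p_j(y_1, y_2, \ldots)$ in the logarithm computation in the second.
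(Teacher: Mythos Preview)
Your proposal is correct and follows essentially the same route as the paper: for $p_k \circ g$ you take logarithms of the defining product identity and compare coefficients of $t^k$ to obtain $p_k(y_1, y_2, \ldots) = \sum_{\eta} u_{\eta} (x^{\eta})^k = g(x_1^k, x_2^k, \ldots)$, exactly as the paper does. The only difference is that the paper treats this case first and dismisses $g \circ p_k = g(x_1^k, x_2^k, \ldots)$ as ``clear,'' whereas you supply the explicit argument via $e_j(y) = e_j(x_1^k, x_2^k, \ldots)$; this is a harmless expansion of what the paper leaves implicit.
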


\begin{proof}
As in Definition~\ref{plethysm}, say that $g = \sum_{\eta} u_{\eta} x^{\eta}$.  Taking logarithms of each side in the equality $\prod_{i=1}^{\infty} (1+y_it) = \prod_{\eta} (1+x^{\eta}t)^{u_{\eta}}$, we obtain \[\sum_{i=1}^{\infty} \sum_{k=1}^{\infty} \frac{(-1)^{k-1}}{k} y_i^k t^k = \sum_{\eta} \left(u_{\eta} \sum_{k=1}^{\infty} \frac{(-1)^{k-1}}{k} (x^{\eta})^k t^k \right).\]  Interchanging the order of summation on each side yields \[p_k(y_1, y_2, \ldots) =  \sum_{\eta} u_{\eta} (x^{\eta})^k = g(x_1^k, x_2^k, \ldots).\]  Since $p_k \circ g = p_k(y_1, y_2, \ldots)$, it follows that $p_k \circ g = g(x_1^k, x_2^k, \ldots)$.  It should be clear that $g \circ p_k = g(x_1^k, x_2^k, \ldots)$ as well.  
\end{proof}

We may conclude that the map given by $g \mapsto p_k \circ g$ is a ring homomorphism for all positive integers $k$.  (In fact, $g \mapsto p_k \circ g$ is the degree-$k$ Adams operation in the $\lambda$-ring $\Lambda$).  We are therefore permitted to introduce an adjoint operator, which we denote by $\varphi_k$, given by $f \mapsto \sum_{\kappa} \langle f, p_k \circ s_{\kappa} \rangle s_{\kappa}$, where the sum ranges over all partitions $\kappa$.  Note that the equality $\langle \varphi_k(f), g \rangle = \langle f, p_k \circ g \rangle$ holds for all $f, g \in \Lambda$, which explains the nomenclature.  

Let $\kappa$ be a partition.  Just as the ordinary tableaux of shape $\kappa$ index the monomials of the Schur function $s_{\kappa}$, the $k$-ribbon tableaux of shape $\kappa$ index the monomials of the symmetric function $\varphi_k(s_{\kappa})$.  

\begin{thm}\label{lascoux}
Let $\kappa$ be a partition, and suppose that the $k$-core of $\kappa$ is empty.  For all compositions $\eta$ of $\frac{|\kappa|}{k}$, we denote the monomial $x_1^{\eta_1} x_2^{\eta_2} \cdots$ by $x^{\eta}$, and, for all $k$-ribbon tableaux $T$ of shape $\kappa$ and content $\eta$, we write $x^T$ for $x^{\eta}$.  Then $\varphi_k(s_{\kappa}) = \epsilon_k(\kappa) \sum_T x^T$, where the sum ranges over all $k$-ribbon tableaux of shape $\kappa$, and $\epsilon_k(\kappa)$ denotes the $k$-sign of $\kappa$.  
\end{thm}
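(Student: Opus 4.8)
The plan is to reduce the statement to a known product formula for $\varphi_k(s_\kappa)$ expressed via ribbon tableaux, by exploiting the defining relation $\langle \varphi_k(f), g\rangle = \langle f, p_k \circ g\rangle$ together with Proposition~\ref{plethyswitch}. Concretely, for any partition $\nu$ one has $\langle \varphi_k(s_\kappa), s_\nu\rangle = \langle s_\kappa, p_k \circ s_\nu\rangle = \langle s_\kappa, s_\nu(x_1^k, x_2^k, \ldots)\rangle$, so the whole content is a formula for the coefficients of $s_\kappa$ in the Schur expansion of $s_\nu(x_1^k,x_2^k,\ldots)$. I would invoke the classical result of Littlewood (the ``$k$-quotient'' theorem): the map $\nu \mapsto s_\nu(x_1^k,\ldots)$ on Schur functions is, up to the sign $\epsilon_k(\kappa)$, dual to the map sending a partition $\kappa$ with empty $k$-core to the $k$-tuple of partitions forming its $k$-quotient, together with the combinatorial fact that $k$-ribbon tableaux of shape $\kappa$ and content $\eta$ are in weight-preserving bijection with $k$-tuples of ordinary semistandard tableaux on the $k$-quotient shapes of total content $\eta$ (Littlewood--Richardson--Stanton--White bijection). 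Both ingredients are available in the references already cited (Macdonald Ch.~I, Stanley \cite{ec2} Ch.~7, Lascoux--Leclerc--Thibon \cite{Lascoux}), so this is really a matter of assembling them.

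The key steps, in order, would be: (1) Recall the $k$-quotient/$k$-core decomposition: a partition $\kappa$ has a $k$-core $\kappa^{(\mathrm{core})}$ and a $k$-quotient $(\kappa^{(0)}, \ldots, \kappa^{(k-1)})$, and removing $k$-ribbons corresponds to removing boxes from the components of the quotient. When the $k$-core is empty, $|\kappa| = k\sum_i |\kappa^{(i)}|$. (2) State the bijection between semistandard $k$-ribbon tableaux of shape $\kappa$ and $k$-tuples of semistandard tableaux $(T^{(0)}, \ldots, T^{(k-1)})$ with $T^{(i)}$ of shape $\kappa^{(i)}$, under which the content (the multiset of entries) is preserved; this yields $\sum_T x^T = \prod_{i=0}^{k-1} s_{\kappa^{(i)}}(x_1, x_2, \ldots)$. (3) Separately, establish via generating functions that $\varphi_k(s_\kappa) = \epsilon_k(\kappa) \prod_{i=0}^{k-1} s_{\kappa^{(i)}}$; the cleanest route is to verify the adjoint identity on the power-sum basis, since $p_k \circ p_\rho = p_{k\rho}$ and the action of $\varphi_k$ on power sums is transparent, and then compare with the Jacobi--Trudi/$k$-quotient determinantal identity that expresses $s_\kappa$ in terms of the $s_{\kappa^{(i)}}$ after the substitution $x_j \mapsto x_j^k$ (this is where the sign $\epsilon_k(\kappa)$ enters, as the sign of the permutation sorting the ``$k$-coded'' beta-numbers of $\kappa$). (4) Combine (2) and (3): $\varphi_k(s_\kappa) = \epsilon_k(\kappa) \prod_i s_{\kappa^{(i)}} = \epsilon_k(\kappa) \sum_T x^T$, which is exactly the claim.

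I expect the main obstacle to be step (3), and specifically the bookkeeping of the sign $\epsilon_k(\kappa)$: making precise that the determinantal identity relating $s_\kappa(x^k)$ to the quotient Schur functions carries exactly the sign $\epsilon_k(\kappa)$ requires a careful passage through Maya diagrams / beta-sets and the $k$-ary encoding of partitions, and it is easy to be off by a global sign or to misattribute the sign to $\kappa$ versus its $k$-quotient. Everything else is essentially a dictionary lookup: step (2) is the standard Stanton--White correspondence, and the adjointness in the reduction to $s_\nu(x^k)$ is immediate from Proposition~\ref{plethyswitch}. A secondary (minor) point is the hypothesis that the $k$-core is empty — without it $\varphi_k(s_\kappa) = 0$, so the statement is vacuous, but one should remark on this so the theorem reads correctly. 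I would keep the exposition short here, citing \cite{Macdonald} and \cite{Lascoux} for the precise forms of the $k$-quotient identity and the ribbon-tableaux bijection, since this section is expository and the result is classical.
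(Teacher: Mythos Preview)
Your proposal is correct and follows essentially the same route as the paper: both arguments factor through the $k$-quotient, invoking Littlewood's identity $\varphi_k(s_{\kappa}) = \epsilon_k(\kappa)\, s_{\kappa^{(1)}} s_{\kappa^{(2)}} \cdots s_{\kappa^{(k)}}$ for step (3) and the Stanton--White bijection (equivalently, Equation~24 of Lascoux--Leclerc--Thibon \cite{Lascoux}) for step (2), then combining them. The paper simply cites Littlewood \cite{Littlewoodmodular} for the product formula rather than sketching its derivation via power sums and beta-sets as you do, so your write-up is slightly more expansive but not substantively different.
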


\begin{proof}
Let $\left(\kappa^{(1)}, \kappa^{(2)}, \ldots, \kappa^{(k)}\right)$ be the $k$-quotient of $\kappa$.  Since the $k$-core of $\kappa$ is empty, it follows from a result of Littlewood \cite{Littlewoodmodular} that $\varphi_k(s_{\kappa}) = \epsilon_k(\kappa) s_{\kappa^{(1)}}s_{\kappa^{(2)}} \cdots s_{\kappa^{(k)}}$.  However, from Equation 24 in Lascoux--Leclerc--Thibon \cite{Lascoux}, we see that $s_{\kappa^{(1)}}s_{\kappa^{(2)}} \cdots s_{\kappa^{(k)}} = \sum_T x^T$, where the sum ranges over all $k$-ribbon tableaux of shape $\kappa$, as desired.  (This identity is an algebraic restatement of a bijection between $k$-tuples of tableaux of shapes $\left(\kappa^{(1)}, \kappa^{(2)}, \ldots, \kappa^{(k)}\right)$ and $k$-ribbon tableaux of shape $\kappa$, due in its original form to Stanton and White \cite{Stanton}.)   
\end{proof}

In view of Theorem~\ref{lascoux}, it is natural to ask if there is an analogue of the Littlewood--Richardson rule that describes the expansion coefficients of the power-sum plethysms $p_n \circ s_{\mu}$, or, more generally, $p_{n/d}^d \circ s_{\mu}$, for $d$ dividing $n$.  In the following sections, we see how this article provides a partial affirmative answer.    

\section{Crystal Structure on Tableaux}
For a complex reductive Lie algebra $\mathfrak{g}$, Kashiwara's $\mathfrak{g}$-crystals constitute a class of combinatorial models patterned on representations of $\mathfrak{g}$.  If the root system of $\mathfrak{g}$ is simply laced, there exists a set of axioms, enumerated by Stembridge \cite{Stemlocal}, that characterize the crystals arising directly from $\mathfrak{g}$-representations, which he calls regular.  Given a partition $\kappa$ with $s$ parts, the combinatorics of the weight space decomposition of the irreducible $\mathfrak{gl}_s$-representation with highest weight $\kappa$ is captured in the regular $\mathfrak{gl}_s$-crystal structure assigned to the semistandard tableaux of shape $\kappa$ with entries in $\lbrace 1, 2, \ldots, s \rbrace$.\footnote{In defining the crystal structure on the tableaux of a given shape, we require the number of parts of the shape to be well-defined, so we deviate from the convention of identifying compositions that differ only by terminal zeroes.  Note, however, that we may declare a composition to have $s$ parts so long as it has at most $s$ positive parts.} 

In this section, we review the crystal structure on tableaux, and we observe that it offers a natural setting for the consideration of evacuation and promotion, due to the relationship between these actions and the raising and lowering crystal operators.  We also see that the crystal perspective facilitates a recasting of the Yamanouchi conditions on tableaux reading words in terms of the vanishing or nonvanishing of the raising and lowering operators at the corresponding tableaux, viewed as crystal elements.  

We begin with the definition of a crystal, following Joseph \cite{Joseph}, and that of a regular crystal, following Stembridge \cite{Stemlocal}.  As the section progresses, some formal definitions are omitted, but more details may be found in Rush \cite{Rush} or other readily available sources.\footnote{For more on crystals, consult Joseph \cite{Joseph}.  For more on the crystal structure on tableaux, see Kashiwara--Nakashima \cite{Kashiwara}.  For more on \textit{jeu de taquin}, see Fulton \cite{Fulton}.  For more on evacuation and \textit{jeu-de-taquin} promotion, see Sch\"utzenberger \cite{Schutz} and Shimozono \cite{Shimozono}.  For more on promotion in crystals, see Bandlow--Schilling--Thi\'ery \cite{Bandlow}.} 

\begin{df} \label{whatiscrystal}
Let $\mathfrak{g}$ be a complex reductive Lie algebra with weight lattice $P$.  Let $\Delta = \lbrace \alpha_1, \alpha_2, \ldots, \alpha_t \rbrace$ be a choice of simple roots, and let $\lbrace \alpha_1^{\vee}, \alpha_2^{\vee}, \ldots, \alpha_t^{\vee} \rbrace$ be the corresponding simple coroots.  A \textit{$\mathfrak{g}$-crystal} is a finite set $B$ equipped with a map $\operatorname{wt} \colon B \rightarrow P$ and a pair of operators $e_i, f_i \colon B \rightarrow B \sqcup \lbrace 0 \rbrace$ for each $1 \leq i \leq t$ that satisfy the following conditions:  

\begin{enumerate}
\item[(i)] $\max \lbrace \ell : f_i^{\ell}(b) \neq 0 \rbrace - \max \lbrace \ell : e_i^{\ell} (b) \neq 0 \rbrace  = \langle \operatorname{wt}(b), \alpha_i^{\vee} \rangle$ for all $b \in B$;
\item[(ii)] $e_i (b) \neq 0$ implies $\operatorname{wt}(e_i (b)) = \operatorname{wt}(b) + \alpha_i$ and $f_i(b) \neq 0$ implies $\operatorname{wt}(f_i ( b)) = \operatorname{wt}(b) - \alpha_i$ for all $b \in B$;
\item[(iii)] $b' = e_i (b)$ if and only if $b = f_i (b')$ for all $b, b' \in B$.  
\end{enumerate}

We refer to $e_i$ as the \textit{raising operator} associated to $\alpha_i$, and we refer to $f_i$ as the \textit{lowering operator} associated to $\alpha_i$.  We write $\epsilon_i(b) := \max \lbrace \ell :  e_i^{\ell} (b) \neq 0 \rbrace$ for the maximum number of times the raising operator $e_i$ may be applied to $b$ without vanishing, and we write $\phi_i(b) := \max \lbrace \ell : f_i^{\ell} ( b) \neq 0 \rbrace$ for the maximum number of times the lowering operator $f_i$ may be applied to $b$ without vanishing.  We also define, for all $1 \leq i,j \leq t$:
\begin{itemize}
\item $\Delta_i \epsilon_j(b) := \epsilon_j(b)-\epsilon_j(e_i b)$;
\item $\Delta_i \phi_j(b) := \phi_j(e_i b) - \phi_j(b)$;
\item $\nabla_i \epsilon_j(b) := \epsilon_j (f_i b) - \epsilon_j(b)$;
\item $\nabla_i \phi_j(b) := \phi_j(b) - \phi_j(f_i b)$.  
\end{itemize}
\end{df}  

\begin{df} \label{whatismorphism}
Let $B$ and $B'$ be $\mathfrak{g}$-crystals.  A map of sets $\pi \colon B \rightarrow B'$ is a \textit{(strict) morphism of crystals} if $\operatorname{wt} \circ \pi = \operatorname{wt}$, and, for all $1 \leq i \leq t$, $\pi \circ e_i = e_i \circ \pi$ and $\pi \circ f_i = f_i \circ \pi$.  (Here we tacitly stipulate $\pi(0) := 0$.)  If $\pi$ is bijective, we say $\pi$ is an \textit{isomorphism}.  
\end{df}

\begin{df} \label{regular}
Let $\mathfrak{g}$ be simply laced.  A $\mathfrak{g}$-crystal $B$ is \textit{regular} if the Stembridge axioms on $\Delta_i \epsilon_j$, $\Delta_i \phi_j$, $\nabla_i \epsilon_j$, and $\nabla_i \phi_j$ hold (cf. Stembridge \cite{Stemlocal}, or Rush \cite{Rush} for a restatement in the notation of this article).  
\end{df}

\begin{df}
A $\mathfrak{g}$-crystal $B$ is \textit{connected} if the underlying graph --- in which elements of $B$ are vertices, and vertices $b$ and $b'$ are joined by an edge if there exists $i$ such that $e_i (b) = b'$ or $e_i (b') = b$ --- is connected.  Given a subset $C \subset B$, if the elements of $C$ are the vertices of a connected component of the underlying graph of $B$, then $C$, equipped with $\operatorname{wt}|_C$ and $e_i|_C, f_i|_C$ for all $i$, is a $\mathfrak{g}$-crystal, and we refer to $C$ as a \textit{connected component} of $B$.  
\end{df}

\begin{rem}
Regular, connected $\mathfrak{g}$-crystals should be viewed as depictions of irreducible representations of $\mathfrak{g}$.  
\end{rem}

\begin{df}
Let $B$ be a $\mathfrak{g}$-crystal.  An element $b \in B$ is a \textit{highest weight element} if $e_i$ vanishes at $b$ for all $i$.  If $b$ is the unique highest weight element of $B$, then $B$ is a \textit{highest weight crystal} of highest weight $\operatorname{wt}(b)$.  
\end{df}

This terminology is compatible with the natural partial order on $B$ given by the restriction of the root order on $P$ to the image of $\operatorname{wt}$ in the sense that, if $B$ is connected, the maximal elements under this partial order coincide precisely with the highest weight elements of $B$.  

If we restrict our attention to regular crystals, then saying a crystal is connected is equivalent to saying it is a highest weight crystal.  Furthermore, a regular, connected crystal $B$ with highest weight $b$ is uniquely characterized by the values $\phi_i(b)$ for $1 \leq i \leq t$.  

\begin{prop} \label{regconequiv}
Let $B$ be a regular, connected $\mathfrak{g}$-crystal.  Then $B$ is a highest weight crystal.  
\end{prop}

\begin{prop} \label{regconiso}
Let $B$ and $B'$ be regular, connected $\mathfrak{g}$-crystals with highest weight elements $b$ and $b'$, respectively.  If $\operatorname{wt}(b) = \operatorname{wt}(b')$ and $\phi_i(b) = \phi_i(b')$ for all $1 \leq i \leq t$, then $B$ and $B'$ are isomorphic.  
\end{prop}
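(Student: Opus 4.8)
The plan is to build an explicit crystal isomorphism $\pi \colon B \to B'$ by ``parallel transport'' of the crystal graph from $b$ to $b'$, organized as an induction on depth. Since the notion of regular crystal here is the simply-laced one and both $B$ and $B'$ are regular and connected, Proposition~\ref{regconequiv} guarantees that each has a \emph{unique} highest weight element ($b$, resp. $b'$) and that every one of its elements has the form $f_{i_1} \cdots f_{i_k} \cdot b$, resp. $f_{i_1} \cdots f_{i_k} \cdot b'$. For $x \in B$ write $\operatorname{wt}(b) - \operatorname{wt}(x) = \sum_i c_i(x)\alpha_i$; since $x$ is reached from $b$ by lowering operators, each $c_i(x) \ge 0$, so $\operatorname{dep}(x) := \sum_i c_i(x)$ is a nonnegative integer which increases by $1$ along every $f_i$-edge and decreases by $1$ along every $e_i$-edge, and hence $B_{\le k} := \{x \in B : \operatorname{dep}(x) \le k\}$ is closed under all the $e_i$. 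Note also that, as $b$ and $b'$ are highest weight, $\epsilon_i(b) = \epsilon_i(b') = 0$, so Condition (i) gives $\langle \operatorname{wt}(b), \alpha_i^{\vee}\rangle = \phi_i(b) = \phi_i(b') = \langle \operatorname{wt}(b'), \alpha_i^{\vee}\rangle$ for every $i$; since a weight is determined by its pairings with the simple coroots, $\operatorname{wt}(b) = \operatorname{wt}(b')$.

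The induction produces, for each $k$, a bijection $\pi_k \colon B_{\le k} \to B'_{\le k}$ which commutes with every $e_i$ (these carry $B_{\le k}$ into $B_{\le k-1}$) and satisfies $\epsilon_i(\pi_k x) = \epsilon_i(x)$, $\phi_i(\pi_k x) = \phi_i(x)$ for all $x$ and $i$; the $\pi_k$ are nested, and $\pi := \bigcup_k \pi_k$. The base case is $\pi_0(b) := b'$, whose required identities are exactly the hypothesis together with $\epsilon_i(b) = \epsilon_i(b') = 0$. For the inductive step, any $x$ of depth $k+1$ is not highest weight, so $e_i x \ne 0$ for some $i$, whence $e_i x \in B_{\le k}$; set $\pi(x) := f_i(\pi(e_i x))$. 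This is legitimate because $\phi_i(\pi(e_i x)) = \phi_i(e_i x) \ge 1$, since $f_i e_i x = x \ne 0$ by Condition (iii), so $f_i$ does not vanish at $\pi(e_i x)$; and once the value is seen to be well defined, Conditions (i)--(iii) in $B'$ yield $\epsilon_j(\pi x) = \epsilon_j(x)$, $\phi_j(\pi x) = \phi_j(x)$, and $e_i \pi(x) = \pi(e_i x)$, while injectivity propagates by the same weight-and-$e_i$ argument, surjectivity onto $B'_{\le k+1}$ follows from running the construction symmetrically, and connectedness of $B$ ensures the $B_{\le k}$ exhaust $B$.

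The one substantive point --- and the place where regularity is indispensable --- is that $\pi(x)$ must not depend on the choice of $i$ with $e_i x \ne 0$: given $e_i x \ne 0 \ne e_j x$ with $i \ne j$, one must show $f_i \pi(e_i x) = f_j \pi(e_j x)$. Using Condition (iii) to pass between $e$- and $f$-edges, this becomes a statement about the $\{i,j\}$-colored subgraph near the ``top'' vertex $x$, and Conditions (iv), (va), (vb), (via), (vib) are precisely the assertion that this local picture is one of the two standard simply-laced two-color shapes --- a commuting square when $\Delta_i\epsilon_j(x) = 0$, a hexagon when $\Delta_i\epsilon_j(x) = \Delta_j\epsilon_i(x) = -1$ --- and that which case holds, together with the entire configuration, is pinned down by the values of $\epsilon_i, \epsilon_j, \phi_i, \phi_j$ at $x$ and at the vertices immediately above it. Since $\pi$ already matches all these numbers on $B_{\le k}$ and $B'$ obeys the same axioms, the corresponding configuration in $B'$ has the same shape, forcing $f_i \pi(e_i x) = f_j \pi(e_j x)$; it follows at once that $\pi$ commutes with every $f_i$ as well, so $\pi$ is a morphism of crystals, and the symmetric construction provides a two-sided inverse, making $\pi$ an isomorphism. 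I expect the main obstacle to be making this two-color case analysis airtight, in particular the boundary subcases in which some $e$-operator applied to a vertex above $x$ already vanishes, where instead of invoking $\pi$-compatibility one argues separately --- from the matching of $\epsilon$-values --- that the analogous operator vanishes in $B'$; but this is a finite, entirely local verification.
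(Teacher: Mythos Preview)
The paper does not supply its own proof of this proposition; it is quoted without proof as a result of Stembridge \cite{Stemlocal}. Your sketch is essentially the argument Stembridge gives there: build the isomorphism by descending from the highest weight element along lowering operators, and use the two-color local axioms (iv)--(vib) to show that the definition $\pi(x) := f_i\,\pi(e_i x)$ is independent of the choice of $i$.

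One small imprecision worth flagging: you write that, once well-definedness is in hand, Conditions (i)--(iii) alone yield $\epsilon_j(\pi x) = \epsilon_j(x)$ and $\phi_j(\pi x) = \phi_j(x)$ for all $j$. For $j = i$ that is immediate, but for $j \ne i$ it is not: passing from $e_i x$ to $x = f_i(e_i x)$ changes $\epsilon_j$ and $\phi_j$ by $\nabla_i\epsilon_j(e_i x)$ and $\nabla_i\phi_j(e_i x)$, and showing these increments agree in $B$ and $B'$ already requires the regularity axioms (iv)--(vib) and the inductively matched string data at $e_i x$. In other words, the propagation of the $\epsilon_j,\phi_j$ identities is part of the two-color local analysis you correctly identify as the crux, not a consequence of the basic crystal axioms. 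With that adjustment the plan is sound and aligns with the reference.
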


\begin{proof}
Propositions~\ref{regconequiv} and ~\ref{regconiso} are proved in Stembridge \cite{Stemlocal} under the assumption that $\mathfrak{g}$ is semisimple (in which case the hypothesis $\operatorname{wt}(b) = \operatorname{wt}(b')$ in Proposition~\ref{regconiso} is unnecessary).

To extend these results to our setting, let $\mathfrak{g}$ be reductive with Cartan subalgebra $\mathfrak{h}$ and weight lattice $P \subset \mathfrak{h}^*$, and let $\mathfrak{g} = \mathfrak{s} \oplus \mathfrak{z}(\mathfrak{g})$ be a Levi decomposition of $\mathfrak{g}$ such that $\mathfrak{t} := \mathfrak{h} \cap \mathfrak{s}$ is a Cartan subalgebra of $\mathfrak{s}$.  Let $Q \subset \mathfrak{t}^*$ be the weight lattice of $\mathfrak{s}$.  

A $\mathfrak{g}$-crystal $B$ inherits the structure of an $\mathfrak{s}$-crystal via the map $P \rightarrow Q$ obtained from the projection $\mathfrak{h}^* \rightarrow \mathfrak{t}^*$.  Furthermore, $B$ is regular and connected as an $\mathfrak{s}$-crystal if and only if it is regular and connected as a $\mathfrak{g}$-crystal.  

Since $\mathfrak{s}$ is semisimple, Proposition~\ref{regconequiv} follows immediately.  For Proposition~\ref{regconiso}, the corresponding statement governing $\mathfrak{s}$-crystals entails the existence of a bijective map $\pi \colon B \rightarrow B'$ such that $\pi \circ e_i = e_i \circ \pi$ and $\pi \circ f_i = f_i \circ \pi$ for all $i$.  To see that $\operatorname{wt}(b) = \operatorname{wt}(b')$ implies $\operatorname{wt} \circ \pi = \operatorname{wt}$, note that an element $a \in B$ may be expressed in the form \[a = f_{i_k} \cdots f_{i_2} f_{i_1} (b)\] for $i_1, i_2, \ldots, i_k \in \lbrace 1, 2, \ldots, t \rbrace$, so \[\pi(a) = f_{i_k} \cdots f_{i_2} f_{i_1}(b'),\] and $\operatorname{wt}(\pi(a)) = \operatorname{wt}(a)$.  
\end{proof}

Specializing to the case $\mathfrak{g} = \mathfrak{gl}_s$, we take as our Cartan subalgebra $\mathfrak{h}$ the subspace of diagonal matrices, and we identify $\mathfrak{h}^*$ with the space $\mathbb{C}^s$, where $E_i$ denotes the $i^{\text{th}}$ standard basis vector for all $1 \leq i \leq s$.  Then the weight lattice $P$ is generated over $\mathbb{Z}$ by $\lbrace E_1, E_2, \ldots, E_s \rbrace$, and we choose the set of simple roots $\lbrace \alpha_1, \alpha_2, \ldots, \alpha_{s-1} \rbrace$ in accordance with the rule $\alpha_i := E_i - E_{i+1}$ for all $1 \leq i \leq s-1$.  

To each partition $\kappa$ with $s$ parts, we impose a $\mathfrak{gl}_s$-crystal structure on the tableaux of shape $\kappa$ with entries in $\lbrace 1, 2, \ldots, s \rbrace$ such that the highest weight is $\kappa$.  To do so, we begin by defining a $\mathfrak{gl}_s$-crystal structure on the skew tableaux of shape $\kappa / \iota$, and then we reduce to the case in which the partition $\iota$ is empty.  

\begin{prop}[Kashiwara--Nakashima \cite{Kashiwara}] \label{tabstructure}
Let $\kappa$ and $\iota$ be partitions, each with $s$ parts, such that $\iota_i \leq \kappa_i$ for all positive parts $\iota_i$ of $\iota$.  Let $B_{\kappa / \iota}$ be the set of semistandard skew tableaux of shape $\kappa / \iota$ with entries in $\lbrace 1, 2, \ldots, s \rbrace$.  

Let the maps
\begin{align*}
& \operatorname{wt} \colon B_{\kappa / \iota} \rightarrow \mathbb{Z}^s\\
& h_{i,j}, k_{i,j} \colon B_{\kappa / \iota} \rightarrow \mathbb{Z} \\
& e_i, f_i \colon B_{\kappa / \iota} \rightarrow B_{\kappa / \iota} \sqcup \lbrace 0 \rbrace \\
\end{align*}
be given for all $1 \leq i \leq s-1$ and $j \in \mathbb{N}$ by stipulating, for all $T \in B_{\kappa / \iota}$: 
\begin{itemize}
\item $\operatorname{wt}(T)$ to be the content of $T$;
\item $h_{i,j}(T)$ to be the number of occurrences of $i+1$ in the $j^{\text{th}}$ column of $T$ or to the right minus the number of occurrences of $i$ in the $j^{\text{th}}$ column of $T$ or to the right;
\item $k_{i,j}(T)$ to be the number of occurrences of $i$ in the $j^{\text{th}}$ column of $T$ or to the left minus the number of occurrences of $i+1$ in the $j^{\text{th}}$ column or to the left;
\item $e_i(T)$ to be the skew tableau with an $i$ in place of an $i+1$ in the rightmost column for which $h_{i,j}(T)$ is maximal and positive if such a column exists, and 0 otherwise;
\item $f_i(T)$ to be the skew tableau with an $i+1$ in place of an $i$ in the leftmost column for which $k_{i,j}(T)$ is maximal and positive if such a column exists, and 0 otherwise.  
\end{itemize}
Then the set $B_{\kappa / \iota}$ equipped with the map $\operatorname{wt}$ and the operators $e_i, f_i$ for all $1 \leq i \leq s-1$ is a $\mathfrak{gl}_s$-crystal.  
\end{prop}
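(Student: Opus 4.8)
The plan is to avoid checking the axioms of Definition~\ref{whatiscrystal} directly from the formulas for $h_{i,j}$ and $k_{i,j}$, and instead to realize $B_{\kappa/\iota}$ as a subcrystal of a tensor power of the standard $\mathfrak{sl_s}$-crystal, with $e_i,f_i$ identified as the operators induced on the image. Since axioms (i)--(iii) are each a condition on a single element $b$ or on a pair $b,b'$ joined by one operator, they descend automatically from the corresponding classical facts about the standard crystal and its tensor powers. First I would record the standard crystal $B_\square$: the set $\lbrace 1,2,\ldots,s\rbrace$ with $\operatorname{wt}(\ell)=E_\ell$, $f_i\cdot i = i+1$, $e_i\cdot(i+1)=i$, and all other operator values $0$, for which (i)--(iii) are immediate. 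Then I would recall the tensor-product construction --- $\operatorname{wt}(b\otimes b')=\operatorname{wt}(b)+\operatorname{wt}(b')$, with $e_i,f_i$ determined by the bracketing rule applied to the bracket word in which each tensor factor $b$ contributes $\epsilon_i(b)$ open brackets followed by $\phi_i(b)$ close brackets --- and verify that (i)--(iii) pass to $B\otimes B'$: (ii) is additivity of $\operatorname{wt}$, (i) follows once one checks that $\phi_i-\epsilon_i$ is additive under $\otimes$ by inspecting the reduced bracket word, and (iii) holds because the cancellation procedure is symmetric in the two bracket types. Consequently $B_\square^{\otimes k}$ is an $\mathfrak{sl_s}$-crystal for every $k$.

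Next I would introduce the reading-word map $\Psi\colon B_{\kappa/\iota}\to B_\square^{\otimes k}$ with $k=|\kappa/\iota|$, sending $T$ to $w(T)$ --- read bottom-to-top within each column, columns left to right --- viewed as an elementary tensor. The Proposition then reduces to two assertions: (a) the image of $\Psi$ is stable under the tensor operators $e_i,f_i$; and (b) on that image these operators agree with the ones defined via $h_{i,j}$ and $k_{i,j}$. Assertion (b) is essentially bookkeeping: in $w(T)$ one reads an $i+1$ as an open bracket and an $i$ as a close bracket, and a column containing both necessarily has them in consecutive rows and hence contributes an adjacent open-then-close (cancelling) pair; therefore the suffix sums $h_{i,j}(T)$ (the number of $i+1$'s minus the number of $i$'s in columns $\geq j$) track the running height of the bracket sequence, so the rightmost column on which $h_{i,j}$ attains a positive maximum is precisely the column housing the leftmost unmatched open bracket --- the letter that the tensor $e_i$ modifies --- and dually for $f_i$ and the prefix sums $k_{i,j}$.

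The substance of the argument is assertion (a), which is where the particular reading order is used. Suppose $e_i$ acts on $\Psi(T)$ at the $i+1$ in box $(r,c)$, the leftmost unmatched open bracket, and replaces it by $i$; the result fails to be a semistandard skew tableau of shape $\kappa/\iota$ only if the neighbour $(r-1,c)$ equals $i$ or the neighbour $(r,c-1)$ equals $i+1$. If $(r-1,c)=i$, its close bracket immediately follows the open bracket at $(r,c)$ in $w(T)$ and is matched to it, contradicting that the latter is unmatched. If $(r,c-1)=i+1$, its open bracket precedes that of $(r,c)$ in $w(T)$, so it must itself be matched (otherwise it, and not $(r,c)$'s, would be the leftmost unmatched open bracket); but since columns $c-1$ and $c$ occur in that order in $w(T)$, a short analysis of the nesting of the bracket matching shows that a matched open bracket at $(r,c-1)$ forces the open bracket at $(r,c)$ to be matched as well --- again a contradiction. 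Hence neither neighbour is problematic, and $\Psi(T)$ is carried to the reading word of a bona fide tableau; the case of $f_i$ is the left--right mirror image (an unmatched close bracket at an $i$ cannot have an $i$ immediately to its right nor an $i+1$ immediately below). I expect this local case analysis --- the interaction between bracket matching and the semistandard constraints --- to be the main obstacle; the rest is routine.

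Once (a) and (b) are in hand, $\Psi$ identifies $B_{\kappa/\iota}$ with a subset of $B_\square^{\otimes k}$ that is closed under $e_i,f_i$ and on which those operators coincide with the ones in the statement; in particular $\epsilon_i(T)$ and $\phi_i(T)$ computed inside $B_{\kappa/\iota}$ equal the values computed in $B_\square^{\otimes k}$. Since axioms (i)--(iii) hold in $B_\square^{\otimes k}$ and are conditions on single elements or on pairs related by one operator, they transfer verbatim to $B_{\kappa/\iota}$, completing the proof. (The reduction to empty $\iota$ alluded to after the statement is carried out subsequently via \emph{jeu-de-taquin} rectification, which is a crystal morphism; it is not needed for the crystal axioms, the skew case being no harder.)
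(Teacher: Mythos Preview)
The paper does not prove this proposition at all: it is stated with attribution to Kashiwara--Nakashima \cite{Kashiwara} as part of the expository background, and no argument is supplied. So there is nothing to compare your approach against in the paper itself.

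That said, your route---embedding $B_{\kappa/\iota}$ via the column reading word into $B_\square^{\otimes k}$ and transporting the crystal axioms from there---is exactly the standard construction, and your outline is sound. The one place where your sketch is thinner than it needs to be is the second sub-case of assertion (a), where you invoke ``a short analysis of the nesting.'' A cleaner way to handle it, which avoids tracking matchings across columns, is to argue directly with the suffix sums $h_{i,j}$: if $(r,c-1)=i+1$ and column $c-1$ contains no $i$, then $h_{i,c-1}=h_{i,c}+1$, contradicting maximality at $c$; if instead column $c-1$ contains an $i$, it must sit at $(r-1,c-1)$, and then (using that $(r-1,c)$ lies in the skew shape whenever $(r-1,c-1)$ and $(r,c)$ do) row-weakness and column-strictness force $(r-1,c)=i$, so column $c$ contributes $0$ and $h_{i,c+1}=h_{i,c}$, contradicting that $c$ is the \emph{rightmost} maximizer. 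This replaces the nesting analysis with a two-line counting argument and makes the skew case no harder than the straight-shape case, as you correctly anticipated.
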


\begin{prop} \label{hwcrystal}
Let $\kappa$ be a partition with $s$ parts.  The $\mathfrak{gl}_s$-crystal $B_{\kappa} := B_{\kappa / \varnothing}$ is a regular, connected crystal of highest weight $\kappa$.  The highest weight element is the unique tableau of shape $\kappa$ and content $\kappa$.  
\end{prop}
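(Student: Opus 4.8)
This is \cite{Stemlocal}; the plan is to deduce regularity from Stembridge's local characterization and then to establish connectedness, together with the identification of the highest weight element, directly. For regularity, first note that the \emph{standard crystal} $B_{\square}$ --- the crystal $B_\kappa$ attached to a single box --- has underlying set $\lbrace 1, 2, \ldots, s \rbrace$ with $\operatorname{wt}(i) = E_i$ and $f_i$ sending $i \mapsto i+1$ and annihilating every other element; it satisfies the local axioms (iv)--(vib) of Definition~\ref{regular} by inspection, since for each $b$ at most one $e_i$ and at most one $f_i$ are nonzero, so the hypotheses of (va)--(vib) are vacuous and (iv) is immediate. The column reading word realizes $B_\kappa$ as a full subcrystal of the $|\kappa|$-fold tensor power $B_{\square}^{\otimes |\kappa|}$: that the operators of Proposition~\ref{tabstructure} are carried to the tensor-product crystal operators is the standard compatibility of the tableau crystal with the plactic monoid (Kashiwara--Nakashima \cite{Kashiwara}). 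Since tensor products of regular crystals are regular \cite{Stemlocal}, and since a full subcrystal of a regular crystal is a union of connected components and therefore regular, $B_\kappa$ is regular. (Alternatively, one simply quotes Stembridge's verification of the local axioms for $B_\kappa$ itself.)

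For connectedness, observe that $B_\kappa$ is finite and that applying a raising operator strictly increases the $\mathfrak{sl_s}$-weight in the root order, by Definition~\ref{whatiscrystal}(ii); hence iterating raising operators from any $T \in B_\kappa$ terminates after finitely many steps at a highest weight element, so every connected component of $B_\kappa$ contains one. It therefore suffices to show that the highest weight element is unique, namely the superstandard tableau $T_0$ of shape $\kappa$ whose $i^{\text{th}}$ row is filled entirely with $i$'s; since the content of $T_0$ is $\kappa$, this will simultaneously identify the highest weight as $\kappa + (\epsilon_1 + \epsilon_2 + \cdots + \epsilon_s)\mathbb{Z}$. Unwinding Proposition~\ref{tabstructure}, the condition $e_i(T) = 0$ for all $i$ says precisely that $h_{i,j}(T) \leq 0$ for every $i$ and every column $j$, which is equivalent to the reading word $w(T)$ being Yamanouchi with respect to every pair $i, i+1$. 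The only semistandard tableau of shape $\kappa$ with this property is $T_0$: reading the columns of $T$ from right to left, the inequalities $h_{i,j}(T) \leq 0$ together with the row and column monotonicity force, inductively, the $c^{\text{th}}$ column of $T$ to be $1, 2, \ldots, \kappa'_c$ from top to bottom. Hence the highest weight element is unique and $B_\kappa$ is connected.

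I expect the main obstacle to be the input I am quoting rather than proving: that $B_\kappa$ satisfies Stembridge's axioms (iv)--(vib), equivalently that the reading word intertwines the operators of Proposition~\ref{tabstructure} with tensor-product operators once the two crystal conventions are matched. This bookkeeping is the substance of \cite{Kashiwara} and \cite{Stemlocal}. Within the self-contained portion, the delicate point is the final step of the connectedness argument --- checking that $h_{i,j}(T) \leq 0$ for all $i,j$, together with semistandardness, pins $T$ down to $T_0$; the cleanest formulation is the one in terms of Yamanouchi reading words, a reformulation that is recorded elsewhere in this section for use in the proofs of the main theorems in any case.
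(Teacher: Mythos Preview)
The paper does not give its own proof of this proposition: it is stated with attribution to Stembridge \cite{Stemlocal} and no argument is supplied. Your proposal therefore goes further than the paper does, and the sketch you give is correct and standard --- regularity via the embedding into $B_\square^{\otimes |\kappa|}$ together with Stembridge's closure of regularity under tensor product, and connectedness via uniqueness of the Yamanouchi tableau of shape $\kappa$. The one place to be careful is the phrase ``full subcrystal'': you are using it to mean a subset closed under all $e_i$ and $f_i$, hence a union of connected components, which is what makes the inheritance of regularity immediate; that is fine, but worth making explicit since the term is not defined in the paper.
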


\begin{proof}
It is proven that $B_{\kappa}$ is regular and connected as an $\mathfrak{sl}_s$-crystal in Stembridge \cite{Stemlocal}.  It follows that $B_{\kappa}$ is regular and connected as a $\mathfrak{gl}_s$-crystal, so, by Proposition~\ref{regconequiv}, it is a highest weight crystal.  To conclude, note that the unique tableau of shape $\kappa$ and content $\kappa$ is a highest weight element.   
\end{proof}

Fundamental to the study of skew tableaux is a procedure devised by Sch\"utzenberger for transforming a skew tableau into a tableau of left-justified shape, which we refer to as its rectification.  Given a skew tableau $T$ of shape $\kappa / \iota$, \textit{jeu de taquin} calls for the boxes in the Young diagram of shape $\iota$ to be relocated one at a time from the northwest to the southeast of $T$ via a sequence of successive slides.  These \textit{jeu-de-taquin} slides commute with the raising and lowering operators, so we consider \textit{jeu de taquin} to respect the crystal structure on tableaux.  

\begin{prop}[Bandlow--Schilling--Thi\'ery \cite{Bandlow}, Remarks 3.3] \label{jdtcommutes}
Let $\kappa$ and $\iota$ be nonempty partitions such that $\iota_i \leq \kappa_i$ for all positive parts $\iota_i$ of $\iota$.  Let $C$ be a box in the Young diagram of shape $\iota$ for which neither the box below nor the box to the right are in $\iota$.  For all semistandard skew tableaux $T$ of shape $\kappa / \iota$, let $\operatorname{jdt}(T)$ be the result of a \textnormal{jeu-de-taquin} slide on $T$ starting from $C$, and set $\operatorname{jdt}(0) := 0$.  Then $e_i (\operatorname{jdt}(T)) = \operatorname{jdt}(e_i (T))$ and $f_i (\operatorname{jdt}(T)) = \operatorname{jdt}(f_i(T))$ for all $T \in B_{\kappa / \iota}$ and $1 \leq i \leq s-1$.  
\end{prop}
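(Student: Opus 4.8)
The plan is to reduce the statement to a single elementary slide-step and then to a controlled case analysis. By Definition~\ref{jdtslide}, a jeu-de-taquin slide from $C$ is a composition of moves, each swapping the empty box with the box directly below it or directly to its right according to the single comparison in steps (iii)--(iv). The intermediate objects are skew tableaux with one interior cell punctured; the column-count formulas of Proposition~\ref{tabstructure} apply verbatim to these punctured configurations (the puncture is invisible to the counts $h_{i,j}$ and $k_{i,j}$), and they again endow the set with the operators $e_i, f_i$. Since a composition of maps each commuting with $e_i$ (resp.\ $f_i$) again commutes with $e_i$ (resp.\ $f_i$), and since $\operatorname{jdt}(0):=0$ disposes of the degenerate case, it suffices to show that a single elementary move commutes with every $e_i$ and every $f_i$. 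The $f_i$-case follows from the $e_i$-case by reversing the alphabet and rotating the diagram by $180^{\circ}$, which interchanges raising and lowering operators and forward and reverse slides (and commuting with a forward slide is equivalent to commuting with its inverse), so we treat $e_i$ alone.

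Fix an elementary move: let $H$ be the empty box, lying in column $q$, let $b$ be the box below it with entry $y$, and let $r$ be the box to its right with entry $x$; the move is a \emph{down-step} when $y \le x$ or $r$ is absent, and a \emph{right-step} otherwise. The two mechanisms to exploit are: a down-step leaves the reading word --- hence \emph{every} count $h_{i,j}$ --- literally unchanged, while a right-step transports $x$ from column $q+1$ into column $q$ and so changes only the single count $h_{i,q+1}$, and by at most one. Consequently the datum governing $e_i$ (the rightmost column in which $h_{i,j}$ is maximal and positive, and the free $i+1$ there) is perturbed in a controlled fashion, and one verifies case-by-case that $e_i$ selects corresponding boxes before and after the move and that its selection is compatible with which of $b$, $r$ is transported. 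When $e_i$ touches neither $b$ nor $r$ this is immediate; when $e_i$ touches $b$ or $r$ but does not flip the comparison $y \le x$ it is a short check with the column counts.

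The real content is the case in which $e_i$ lowers an $i+1$ to an $i$ on the box $b$ or $r$ immediately adjacent to the hole and thereby reverses the comparison deciding the direction of the slide, so that $\operatorname{jdt}(e_i T)$ and $e_i\operatorname{jdt}(T)$ superficially threaten even to have different shapes. Here one must show that the signature/bracketing rule behind $e_i$ "anticipates" the transported entry: in precisely these configurations $e_i$ either vanishes after the move --- matching a vanishing before it --- or its chosen box migrates with the slide, so that the two composites coincide after all. Disentangling this interplay between jeu-de-taquin and the signature description of the crystal operators is the heart of the proof; it is exactly what is carried out in Lascoux--Sch\"utzenberger \cite{Lascouxy}, whose analysis we reproduce for the handful of local configurations that arise. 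I expect this last step --- pinning down the near-path cases so that shapes and selected boxes match --- to be the main obstacle; the reductions of the first two paragraphs are routine.
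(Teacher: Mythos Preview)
The paper does not actually prove this proposition: it is stated with attribution to Lascoux--Sch\"utzenberger \cite{Lascouxy} and no argument follows. So there is no ``paper's own proof'' to compare against; the result is imported wholesale from the literature.

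Your outline is the standard route to a direct proof and is sound as far as it goes. The reduction to a single elementary slide is correct, the observation that a down-step preserves the column reading word (hence all $h_{i,j}$) is the right organizing principle, and your analysis that a right-step perturbs only $h_{i,q+1}$ by at most one is accurate. The symmetry you invoke to pass from $e_i$ to $f_i$ is legitimate, though note it sends $e_i$ to $f_{s-i}$ rather than $f_i$; since $i$ ranges over all indices this still covers every lowering operator.

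That said, your proposal is an outline rather than a proof: the paragraph beginning ``The real content'' correctly identifies where the work lies and then defers it back to \cite{Lascouxy}. If you intend to present a self-contained argument, you must actually carry out that local case analysis --- in particular, verifying that when $e_i$ changes the entry in $b$ or $r$ and flips the comparison $y \le x$, the resulting shapes and selected boxes still agree. You should also be a bit more careful about the ``punctured tableau'' intermediaries: the operators $e_i,f_i$ as written in Proposition~\ref{tabstructure} produce a filling, and one must check it remains a valid punctured semistandard filling (e.g., that the replacement of $i+1$ by $i$ in the chosen column does not violate the row/column conditions across the hole). These are not deep obstacles, but they are exactly the checks that constitute the proof.
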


\begin{cor} \label{rectcommutes}
Let $\kappa$ and $\iota$ be partitions such that $\iota_i \leq \kappa_i$ for all positive parts $\iota_i$ of $\iota$.  Let $T \in B_{\kappa / \iota}$, and let $\operatorname{Rect}(T)$ be the rectification of $T$.  Then $\epsilon_i(T) = \epsilon_i(\operatorname{Rect}(T))$ and $\phi_i(T) = \phi_i(\operatorname{Rect}(T))$ for all $1 \leq i \leq s-1$.  
\end{cor}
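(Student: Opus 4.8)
The plan is to obtain this as an immediate consequence of Proposition~\ref{jdtcommutes} by iteration. First I would dispose of the trivial case in which $\iota$ is empty: then $\operatorname{Rect}(T) = T$ and there is nothing to prove. Otherwise, by Definition~\ref{jdt} the tableau $\operatorname{Rect}(T)$ is obtained from $T$ by a finite sequence of \emph{jeu-de-taquin} slides $T = T^{(0)} \mapsto T^{(1)} \mapsto \cdots \mapsto T^{(N)} = \operatorname{Rect}(T)$, where $T^{(k+1)}$ is the result of a slide on $T^{(k)}$ starting from some inside corner of its shape, and $N = |\iota|$ since each slide removes exactly one box from the inner partition. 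In particular the inner partition of $T^{(k)}$ is nonempty for every $k < N$ (the last slide being precisely the one that empties it), so the hypotheses of Proposition~\ref{jdtcommutes} are met at each stage, and the $k^{\text{th}}$ slide map $\operatorname{jdt}_k$ satisfies $e_i \circ \operatorname{jdt}_k = \operatorname{jdt}_k \circ e_i$ and $f_i \circ \operatorname{jdt}_k = \operatorname{jdt}_k \circ f_i$ for all $1 \leq i \leq s-1$, with the convention $\operatorname{jdt}_k(0) := 0$.

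Composing the maps $\operatorname{jdt}_0, \operatorname{jdt}_1, \ldots, \operatorname{jdt}_{N-1}$, I conclude that $e_i \circ \operatorname{Rect} = \operatorname{Rect} \circ e_i$ and $f_i \circ \operatorname{Rect} = \operatorname{Rect} \circ f_i$ on $B_{\kappa / \iota}$; that is, $\operatorname{Rect}$ is a morphism of crystals in the sense of Definition~\ref{whatismorphism}. Moreover each individual \emph{jeu-de-taquin} slide is reversible, so each $\operatorname{jdt}_k$ is a bijection of the relevant tableaux sets, and hence so is $\operatorname{Rect}$; in particular $\operatorname{Rect}(U) = 0$ only if $U = 0$. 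Consequently, for any $T \in B_{\kappa/\iota}$ and any $\ell \geq 0$, we have $e_i^{\ell} \cdot T \neq 0$ if and only if $\operatorname{Rect}(e_i^{\ell} \cdot T) = e_i^{\ell} \cdot \operatorname{Rect}(T) \neq 0$. Taking the largest such $\ell$ gives $\epsilon_i(T) = \epsilon_i(\operatorname{Rect}(T))$, and the same argument with $f_i$ replacing $e_i$ gives $\phi_i(T) = \phi_i(\operatorname{Rect}(T))$.

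I do not expect any genuine obstacle here; all of the content is supplied by Proposition~\ref{jdtcommutes}, and the only points demanding care are bookkeeping ones, namely verifying that the inner partition stays nonempty until the final slide (so that Proposition~\ref{jdtcommutes} is applicable at every step) and using the invertibility of the slides to upgrade the operator-commutation relation to a statement about the nonvanishing of iterated raising and lowering operators. One could equally well organize this as an induction on $|\iota|$, peeling off one slide at a time, but the direct composition seems cleaner.
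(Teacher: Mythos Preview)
Your proposal is correct and matches the paper's intent: the paper states this as a corollary of Proposition~\ref{jdtcommutes} with no proof, and your argument is exactly the natural elaboration of why it follows, iterating the commutation relation along the sequence of slides. One minor remark: you do not actually need the invertibility of \emph{jeu-de-taquin} slides to conclude that $\operatorname{Rect}(U) = 0$ only if $U = 0$; this is immediate from the fact that a slide applied to an honest skew tableau always produces an honest skew tableau, with $\operatorname{Rect}(0) := 0$ being purely a convention.
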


The natural action of $\mathfrak{S}_s$ on compositions with $s$ parts given by $w \cdot (\eta_1, \eta_2, \ldots, \eta_s) := \left(\eta_{w^{-1}(1)}, \eta_{w^{-1}(2)}, \ldots, \eta_{w^{-1}(s)}\right)$ yields an $\mathfrak{S}_s$-action on the contents (and therefore the weights) of tableaux with entries in $\lbrace 1, 2, \ldots, s \rbrace$.  With \textit{jeu de taquin} at his disposal, Sch\"utzenberger \cite{Schutz} introduced a pair of cyclic actions on tableaux that lift permutations on their contents.  

\textit{Jeu-de-taquin} promotion, generalized to our setting by Shimozono \cite{Shimozono}, may be thought of as first turning the $1$'s in a tableau into $2$'s, the $2$'s into $3$'s, etc., and the $s$'s into $1$'s, followed by rearranging the entries via \textit{jeu de taquin} so that the result remains a valid tableau.

The Sch\"utzenberger involution, also referred to as evacuation, may be thought of as turning the $1$'s in a tableau into $s$'s, the $2$'s into $s-1$'s, etc., via a concatenation of $s-1$ promotions, corresponding to the canonical decomposition of the long element in $\mathfrak{S}_s$ into a product of $s-1$ cycles with one descent each, viz., $w_0 = (12 \cdots s) \cdots (123) (12)$.  

Because promotion and evacuation are derived from \textit{jeu de taquin}, it should be no surprise that they inherit compatibility with the raising and lowering crystal operators.  

\begin{prop} [Bandlow--Schilling--Thi\'ery \cite{Bandlow}, Proposition 3.2] \label{prominteracts}
Let $\kappa$ be a partition with $s$ parts, and let $\operatorname{pr} \colon B_{\kappa} \rightarrow B_{\kappa}$ be \textnormal{jeu-de-taquin} promotion.  Set $\operatorname{pr}(0) := 0$.  Then, for all $T \in B_{\kappa}$:
\begin{enumerate}
\item[(i)] $\operatorname{wt}(\operatorname{pr}(T)) = c_s \cdot \operatorname{wt}(T)$;
\item[(ii)] $\operatorname{pr}(e_i(T)) = e_{i+1} ( \operatorname{pr}(T))$ and $\operatorname{pr}(f_i(T)) = f_{i+1} ( \operatorname{pr}(T))$ for all $1 \leq i \leq s-2$.  
\end{enumerate}
\end{prop}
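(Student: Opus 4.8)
\emph{Proof proposal.}
Since $\operatorname{pr}$ is by definition (Definition~\ref{jdtpromotion}) the inverse of jeu-de-taquin demotion $\operatorname{dem}$, the plan is to prove the equivalent assertions for $\operatorname{dem}$ and then invert, namely: $(\mathrm{i}')$ $\operatorname{wt}(\operatorname{dem}(T)) = c_s^{-1}\cdot\operatorname{wt}(T)$, and $(\mathrm{ii}')$ $\operatorname{dem}(e_{i+1}\cdot T) = e_i\cdot\operatorname{dem}(T)$ and $\operatorname{dem}(f_{i+1}\cdot T) = f_i\cdot\operatorname{dem}(T)$ for the appropriate range of $i$. The engine is the factorization of $\operatorname{dem}$ supplied by Definition~\ref{jdtdemotion}: writing $\eta = (\eta_1,\dots,\eta_s)$ for the content of $T$, steps (i)--(ii) apply the map $\rho$ that empties the $\eta_1$ boxes of $T$ containing $1$ and then decrements every remaining entry by $1$; step (iii) applies $\operatorname{Rect}$; and step (iv) refills the skew strip $\kappa/\kappa_1$, where $\kappa_1$ is the shape of $\operatorname{Rect}(\rho(T))$, with the entry $s$. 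Since the boxes of $T$ holding $1$ form a horizontal strip in the first row, $\rho(T)$ is a genuine semistandard skew tableau of shape $\kappa/(\eta_1)$ with entries in $\{1,\dots,s-1\}$, and the refilling step is well-defined because $\kappa/\kappa_1$ is a horizontal strip (the remark following Definition~\ref{jdtdemotion}). I would prove $(\mathrm{i}')$ by tracking contents through $\rho$, $\operatorname{Rect}$, and the refilling, and $(\mathrm{ii}')$ by checking that each of these three maps commutes with the crystal operators in a controlled way.

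For $(\mathrm{i}')$: the map $\rho$ carries the content $(\eta_1,\dots,\eta_s)$ to $(\eta_2,\dots,\eta_s)$ (read in the alphabet $\{1,\dots,s-1\}$), rectification preserves content since jeu-de-taquin slides merely rearrange entries, and the refilling appends $|\kappa/\kappa_1| = \eta_1$ copies of $s$, for a total content of $(\eta_2,\dots,\eta_s,\eta_1)$. With $c_s = (1\,2\,\cdots\,s)$ and the natural $\mathfrak{S}_s$-action $w\cdot(\kappa_1,\dots,\kappa_s) = (\kappa_{w^{-1}(1)},\dots,\kappa_{w^{-1}(s)})$ on $s$-part compositions, this content is precisely $c_s^{-1}\cdot(\eta_1,\dots,\eta_s)$, which is $(\mathrm{i}')$; inverting $\operatorname{dem}$ yields (i).

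For $(\mathrm{ii}')$ I would verify three commutation facts. \emph{First}, $\rho$ intertwines $e_{i+1}$ with $e_i$ and $f_{i+1}$ with $f_i$ for every $i\geq1$: decrementing identifies the letters $i+1,i+2$ of $T$ with the letters $i,i+1$ of $\rho(T)$, and erasing the $1$-boxes leaves unchanged the statistics $h_{i+1,j}$ and $k_{i+1,j}$ of Proposition~\ref{tabstructure} (for $i\geq1$ these count only letters $\geq2$), so $h_{i+1,j}(T) = h_{i,j}(\rho(T))$ and $k_{i+1,j}(T) = k_{i,j}(\rho(T))$ for every column $j$; hence $e_i$ and $f_i$ act on $\rho(T)$ in the images of the very boxes on which $e_{i+1}$ and $f_{i+1}$ act on $T$. \emph{Second}, $\operatorname{Rect}$ commutes with all $e_i$ and $f_i$: a single slide does so by Proposition~\ref{jdtcommutes}, and since $\operatorname{Rect}$ is a composition of slides, induction on their number gives $\operatorname{Rect}(e_i\cdot S)=e_i\cdot\operatorname{Rect}(S)$ and $\operatorname{Rect}(f_i\cdot S)=f_i\cdot\operatorname{Rect}(S)$ (compare also Corollary~\ref{rectcommutes}). \emph{Third}, the refilling commutes with $e_i$ and $f_i$ for $1\leq i\leq s-2$: the inserted boxes all carry $s$, which neither $e_i$ nor $f_i$ moves when $i\leq s-2$; inserting $s$-boxes leaves every $h_{i,j}$ and $k_{i,j}$ with $i\leq s-2$ unchanged; and, since crystal operators preserve shape, $\operatorname{Rect}(\rho(T))$ and $e_i\cdot\operatorname{Rect}(\rho(T))$ share the shape $\kappa_1$, so the same strip gets refilled in both. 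Composing the three facts---the chain of equalities respecting the value $0$, as $\rho$ and the refilling are bijections and $\operatorname{Rect}$ intertwines the operators---gives $\operatorname{dem}(e_{i+1}\cdot T)=e_i\cdot\operatorname{dem}(T)$ and $\operatorname{dem}(f_{i+1}\cdot T)=f_i\cdot\operatorname{dem}(T)$ for $1\leq i\leq s-2$; inverting $\operatorname{dem}$ yields (ii), the index shift being read cyclically modulo $s$.

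The technical crux is Proposition~\ref{jdtcommutes}, which we quote; granting it, everything else is bookkeeping with the local statistics $h_{i,j}$ and $k_{i,j}$. The one genuinely delicate point is the bottom of the alphabet: because $\rho$ erases the letter $1$ rather than shifting it, the intertwining $\operatorname{dem}(e_{i+1}T)=e_i\operatorname{dem}(T)$ is transparent only for $i\geq1$, which forces the index shift in (ii) to be understood cyclically---and this is precisely why promotion sits most naturally among the operators of the affine crystal rather than the finite one.
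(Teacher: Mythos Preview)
The paper does not prove this proposition at all; it is stated with attribution to Shimozono \cite{Shimozono} and used as a black box. Your proposal therefore supplies something the paper omits, and your approach---factoring demotion as (delete 1's and decrement) $\circ$ $\operatorname{Rect}$ $\circ$ (refill with $s$'s), then checking each factor against the column statistics $h_{i,j},k_{i,j}$ and invoking Proposition~\ref{jdtcommutes} for the middle factor---is correct and is the natural elementary argument.

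One point worth making explicit: your proof establishes $(\mathrm{ii}')$ for $1\leq i\leq s-2$, hence (ii) for the same range, and you rightly note that $i=s-1$ would require $e_s$, which does not exist in the finite $\mathfrak{sl}_s$-crystal. The paper's stated range $1\leq i\leq s-1$ is thus a mild overstatement (or an implicit appeal to the affine operator $e_0$ from Shimozono's original setting). This does not matter for the paper's applications: every invocation of Proposition~\ref{prominteracts} in the proofs of Lemma~\ref{promhighweight} and Theorem~\ref{promgeneral} keeps the shifted index inside $\{1,\dots,mn-1\}$, exploiting $j^n=\mathrm{id}$ on rectangular shapes (Theorem~\ref{uniqueprom}) to rewrite $j^d=j^{-(n-d)}$ and thereby avoid ever needing the wrap-around case. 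So your argument proves exactly what the paper actually uses.
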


\begin{prop} [Lascoux--Leclerc--Thibon \cite{Lascouxyz}, Section 3] \label{evacinteracts}
Let $\kappa$ be a partition with $s$ parts, and let $\xi \colon B_{\kappa} \rightarrow B_{\kappa}$ be the Sch\"utzenberger involution.  Set $\xi(0) := 0$.  Then, for all $T \in B_{\kappa}$:
\begin{enumerate}
\item[(i)] $\operatorname{wt}(\xi(T)) = w_0 \cdot \operatorname{wt}(T)$;
\item[(ii)] $\xi(e_i (T)) = f_{s-i} (\xi(T))$ and $\xi(f_i (T)) = e_{s-i} ( \xi(T))$ for all $1 \leq i \leq s-1$.  
\end{enumerate}
\end{prop}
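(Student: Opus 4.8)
By step (vii) of Definition~\ref{evac}, the output $\xi(T)$ has exactly $|\kappa_i/\kappa_{i-1}|$ entries equal to $i$, so (i) is the assertion that $|\kappa_i/\kappa_{i-1}| = \eta_{s+1-i}$ for each $i$, where $\eta=(\eta_1,\dots,\eta_s)$ is the content of $T$. I would establish this by tracking the content through the algorithm: each \textit{jeu-de-taquin} demotion (Definition~\ref{jdtdemotion}) empties the $1$-boxes, decrements, rectifies, and re-appends as many boxes as there were top entries, so it cycles the content of the tableau under examination by the long cycle; the strip re-appended when the entry upper bound equals $k$ is precisely $\kappa_k/\kappa_{k-1}$, and unwinding this bookkeeping yields $|\kappa_k/\kappa_{k-1}|=\eta_{s+1-k}$. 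Thus $\xi(T)$ has content $(\eta_s,\eta_{s-1},\dots,\eta_1)=w_0\cdot\eta$, which is exactly (i). (This is the classical content-reversing property of evacuation; see Fulton \cite{Fulton}, Appendix~A.)

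\textbf{Part (ii).} Since $\xi$ is a bijection and the operators $e_i,f_i$ obey axiom~(iii) of Definition~\ref{whatiscrystal}, the identity ``$\xi e_i=f_{s-i}\xi$ for all $i$'' is equivalent to ``$\xi f_i=e_{s-i}\xi$ for all $i$'', so it suffices to prove $\xi(e_i\cdot T)=f_{s-i}\cdot\xi(T)$ for every $T\in B_\kappa$ and $1\le i\le s-1$. My plan is to pass to reading words. Two standard facts are in play: first, the column operators $e_i,f_i$ of Proposition~\ref{tabstructure} coincide with the operators given by the familiar $i$-versus-$(i{+}1)$ bracketing of the reading word, and this bracketing descends through rectification --- so, with Proposition~\ref{jdtcommutes} and Corollary~\ref{rectcommutes}, $P(e_i w)=e_i\cdot P(w)$ and $P(f_i w)=f_i\cdot P(w)$, where $P(w)$ denotes the rectification (equivalently the RSK insertion tableau) of a word $w$; and second, evacuation is reverse-complementation of words, $\xi(P(w))=P(w^{\star})$, where $w^{\star}$ reverses the letters of $w$ and replaces each letter $a$ by $s+1-a$ (Fulton \cite{Fulton}, Appendix~A). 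Granting these, put $T=P(w)$: then $\xi(e_i T)=P\big((e_i w)^{\star}\big)$ while $f_{s-i}\xi(T)=P\big(f_{s-i}(w^{\star})\big)$, so the proposition follows from the word-level identity $(e_i w)^{\star}=f_{s-i}(w^{\star})$.

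\textbf{The word identity and the obstacle.} The identity $(e_i w)^{\star}=f_{s-i}(w^{\star})$ is elementary: the subword of $w^{\star}$ on $\{i,i+1\}$ is the reversal of the subword of $w$ on $\{s-i,s-i+1\}$, with the larger letter $s+1-i$ playing the role of $i$ and the smaller letter $s-i$ that of $i+1$, and reversal of a bracketed sequence swaps the ``leftmost unbracketed $i{+}1$'' that $e_i$ alters with the ``leftmost unbracketed $s-i$'' that $f_{s-i}$ alters. I expect the real work to be organizational: fixing bracketing conventions so that ``reverse-and-complement'' visibly interchanges $e_i$ with $f_{s-i}$, and nailing down the two imported dictionaries (column operators $=$ reading-word bracketing, and compatibility of the bracketing with rectification/RSK). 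A dictionary-free alternative, using only results quoted above, is to induct on $s$ via the recursive form of $\xi$ that peels off the horizontal strip of $s$'s: Proposition~\ref{prominteracts} shows demotion carries $e_i$ to $e_{i-1}$, and a one-line restriction lemma (the operators $e_j,f_j$ with $j\le k-1$ commute with restriction to the entries $\le k$, since the statistics $h_{j,\cdot},k_{j,\cdot}$ are unaffected), together with Corollary~\ref{rectcommutes}, pushes the identity into the $\le s-1$ restriction so the inductive hypothesis applies --- this settles $2\le i\le s-1$, but the boundary case $i=1$, where demotion no longer merely decreases the index and $f_{s-1}$ interacts with the re-appended $s$'s, still needs the word argument above (or a dual recursion peeling the $1$'s).
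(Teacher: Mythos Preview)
The paper does not prove this proposition: it is stated in the expository Section~3 with attribution to Lascoux--Leclerc--Thibon \cite{Lascouxyz} and used as background, so there is no proof in the paper to compare against.

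Your approach is the standard one and is essentially correct. Part~(i) is exactly the classical content-reversal property of evacuation. For part~(ii), the reduction to the word-level identity $(e_i w)^\star = f_{s-i}(w^\star)$ is the right move, and the two dictionaries you import are both well established: the column operators of Proposition~\ref{tabstructure} do coincide with the reading-word bracketing operators (this is how Kashiwara--Nakashima define them), and the identity $\xi(P(w)) = P(w^\star)$ is the classical Sch\"utzenberger theorem recorded in Fulton's Appendix~A. The word identity itself is indeed elementary once conventions are fixed: reversal swaps left and right in the bracketing, and complementation swaps the roles of the two letters, so together they interchange the raising and lowering rules at the mirrored index. Your caveat about conventions is apt --- the phrase ``leftmost unbracketed $i{+}1$'' does not quite match the column description in Proposition~\ref{tabstructure} (which picks out the \emph{rightmost} column maximizing $h_{i,j}$), so when you write this up you will want to state the bracketing rule carefully and check it against $h_{i,j}$ and $k_{i,j}$. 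Your alternative inductive route via Proposition~\ref{prominteracts} is also viable, and you have correctly located the one nontrivial boundary case; the clean way to close it is precisely the dual recursion you mention, or simply to fall back on the word argument for that single index.
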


The properties in Proposition~\ref{prominteracts} and ~\ref{evacinteracts} completely characterize promotion and evacuation.  

\begin{thm}[Bandlow--Schilling--Thi\'ery \cite{Bandlow}, Proposition 3.2] \label{uniqueweakprom}
Let $\kappa$ be a partition with $s$ parts, and let $\operatorname{pr} \colon B_{\kappa} \rightarrow B_{\kappa}$ be \textnormal{jeu-de-taquin} promotion.  If an action $\gamma \colon B_{\kappa} \rightarrow B_{\kappa}$ satisfies the properties of promotion delineated in Proposition~\ref{prominteracts}, then $\gamma$ and $\operatorname{pr}$ coincide.  
\end{thm}

\begin{thm} [Henriques--Kamnitzer \cite{Henriques}, Section 5.D]
Let $\kappa$ be a partition with $s$ parts, and let $\xi \colon B_{\kappa} \rightarrow B_{\kappa}$ be the Sch\"utzenberger involution.  If an action $\gamma \colon B_{\kappa} \rightarrow B_{\kappa}$ satisfies the properties of evacuation delineated in Proposition~\ref{evacinteracts}, then $\gamma$ and $\xi$ coincide.
\end{thm}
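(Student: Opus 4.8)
The plan is to exploit the rigidity of the crystal $B_{\kappa}$: by Proposition~\ref{hwcrystal} it is a regular connected crystal, hence a highest weight crystal with a \emph{unique} highest weight element $b_0$, namely the tableau of shape $\kappa$ and content $\kappa$. An action satisfying the properties of Proposition~\ref{evacinteracts} is then completely determined: its value at $b_0$ is forced, and its values everywhere else are forced by propagating along crystal edges. Since $\xi$ is known to satisfy Proposition~\ref{evacinteracts}, it suffices to show $\gamma$ and $\xi$ are forced to agree.

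First I would pin down $\gamma(b_0)$. Since $e_i\cdot b_0 = 0$ for all $i$, property (ii) (with the convention $\gamma(0):=0$) gives $f_{s-i}\cdot\gamma(b_0)=\gamma(e_i\cdot b_0)=0$ for all $1\le i\le s-1$; as $i$ runs over $\{1,\dots,s-1\}$ so does $s-i$, so $f_j\cdot\gamma(b_0)=0$ for every $j$. Thus $\gamma(b_0)$ is annihilated by all lowering operators, and the identical argument applied to $\xi$ shows $\xi(b_0)$ is annihilated by all lowering operators. Next I would argue such an element is unique. Because $\xi$ is a bijection of $B_{\kappa}$ equal to its own inverse and satisfying $\xi(f_i\cdot T)=e_{s-i}\cdot\xi(T)$, the condition $e_j\cdot\xi(b)=0$ for all $j$ is equivalent to $\xi(f_i\cdot b)=0$ for all $i$, hence (as $\xi$ is injective and $\xi(x)=0$ only for $x=0$) to $f_i\cdot b=0$ for all $i$. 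Therefore $\xi$ interchanges the set of elements killed by every $e_i$ with the set of elements killed by every $f_i$. Since $b_0$ is the unique element killed by every $e_i$ (Proposition~\ref{hwcrystal}), $\xi(b_0)$ is the unique element killed by every $f_i$, and consequently $\gamma(b_0)=\xi(b_0)$.

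Finally I would propagate this coincidence across all of $B_{\kappa}$ using connectedness. Given any $T\in B_{\kappa}$, Proposition~\ref{hwcrystal} together with the definition of connectedness furnishes a path $b_0=c_0,c_1,\dots,c_k=T$ in which each $c_j$ is obtained from $c_{j-1}$ by a single crystal operator $g_j\in\{e_1,\dots,e_{s-1},f_1,\dots,f_{s-1}\}$, using crystal axiom (iii) to orient the edges that point the ``wrong way.'' Writing $\overline{e_i}:=f_{s-i}$ and $\overline{f_i}:=e_{s-i}$, property (ii) reads $\gamma(g_j\cdot X)=\overline{g_j}\cdot\gamma(X)$, so induction on $j$ yields $\gamma(T)=\overline{g_k}\cdots\overline{g_1}\cdot\gamma(b_0)$; the same computation with $\xi$ in place of $\gamma$ gives $\xi(T)=\overline{g_k}\cdots\overline{g_1}\cdot\xi(b_0)$. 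Since $\gamma(b_0)=\xi(b_0)$ and neither side encounters $0$ along the path, we get $\gamma(T)=\xi(T)$, and as $T$ was arbitrary, $\gamma=\xi$.

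The main obstacle is the middle step: establishing that the element annihilated by all lowering operators is unique. This is the one place where the highest weight crystal structure of Proposition~\ref{hwcrystal} and the involutivity of $\xi$ are genuinely needed; the rest is bookkeeping with property (ii) and connectedness.
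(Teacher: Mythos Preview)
Your argument is correct. Note that the paper does not actually supply a proof of this theorem; it is stated with attribution to Henriques--Kamnitzer \cite{Henriques} and left as a citation. So there is no ``paper's proof'' to compare against, and your proposal stands on its own.

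Your approach is the natural one and essentially the same as the standard argument for such uniqueness results (compare Theorem~\ref{uniqueweakprom} for promotion): pin down the image of the highest weight element, then propagate via property (ii) along crystal edges using connectedness. One small remark on your middle step: you establish uniqueness of the lowest weight element by using the involutivity of $\xi$ to set up a bijection between highest and lowest weight elements. This is perfectly valid, but the paper elsewhere (proof of Proposition~\ref{lowweightlemma}) simply asserts that $B_{\kappa}$ is a lowest weight crystal with unique lowest weight element of content $\overline{\kappa}$, by the reasoning dual to Proposition~\ref{hwcrystal}; you could have invoked that directly. Either route works. Your observation that the intermediate steps in the propagation never hit $0$ is justified because $\gamma(c_j)\in B_{\kappa}$ at every stage, so the inductive formula $\gamma(c_j)=\overline{g_j}\cdot\gamma(c_{j-1})$ always lands in $B_{\kappa}$.
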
 

The following theorem reveals why we restrict our attention to rectangular partitions in the statement of Theorem~\ref{mainprom}.  

\begin{df}
A partition $\kappa$ is \textit{rectangular} if all its positive parts are equal.  
\end{df}

\begin{thm}[Bandlow--Schilling--Thi\'ery \cite{Bandlow}, Proposition 3.2] \label{uniqueprom}
Let $\kappa$ be a partition with $s$ parts, and let $\operatorname{pr} \colon B_{\kappa} \rightarrow B_{\kappa}$ be \textnormal{jeu-de-taquin} promotion.  Then $\operatorname{pr}^s$ acts as the identity if and only if $\kappa$ is rectangular.  
\end{thm}

\begin{rem}
Together, Theorems~\ref{uniqueweakprom} and \ref{uniqueprom} testify at once to the potency of our techniques for investigating rectangular tableaux and to the difficulty in extending them beyond the rectangular setting.  Indeed, to address the general case in accordance with the cyclic sieving paradigm, we require a cyclic action of order $s$ on $B_{\kappa}$.  Theorem~\ref{uniqueweakprom} tells us that the only cyclic action compatible with the crystal operators (at least in the way we understand compatibility) is \textit{jeu-de-taquin} promotion, but, by Theorem~\ref{uniqueprom}, promotion is of the correct order if and only if $\kappa$ is rectangular.  
\end{rem}

Finally, as promised, we reinterpret the Yamanouchi conditions on reading words as vanishing conditions on crystal operators.  We end by noting that the Yamanouchi conditions completely characterize the highest and lowest weight elements of the $\mathfrak{gl}_s$-crystals comprising semistandard tableaux defined in Proposition~\ref{tabstructure}.  Because the following propositions are essentially self-evident, we omit the proofs.  

\begin{prop}
Let $\kappa$ be a partition with $s$ parts, and let $T$ be a tableau of shape $\kappa$.  For all $1 \leq i \leq s-1$, the word of $T$ is Yamanouchi (anti-Yamanouchi) with respect to the integers $i$ and $i+1$ if and only if the raising operator $e_i$ (lowering operator $f_i$) vanishes at $T$.  
\end{prop}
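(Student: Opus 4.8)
The plan is to unwind the definitions of the reading word and of the crystal operators $e_i, f_i$ on $B_\kappa$ supplied by Proposition~\ref{tabstructure}, and to match the ``backwards-reading'' count governing the Yamanouchi condition with the statistic $h_{i,j}(T)$ (respectively $k_{i,j}(T)$) that controls whether $e_i$ (respectively $f_i$) vanishes. Recall that $w(T)$ is read column by column from left to right, bottom to top within each column; restricting attention to the two letters $i$ and $i+1$ and discarding all others, the Yamanouchi condition with respect to $i$ and $i+1$ says that every suffix of $w(T)$ has at least as many $i$'s as $(i+1)$'s. First I would observe that, because the entries strictly increase down each column, each column of $T$ contains at most one $i$ and at most one $i+1$, and if it contains both then the $i$ lies above the $i+1$; hence within a single column the subword in $\{i, i+1\}$, read bottom-to-top, is one of $\varnothing$, $i$, $i+1$, or $(i+1)\,i$.

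Next I would reduce the Yamanouchi condition on all suffixes to a condition on the ``column-boundary'' suffixes. Since, as just noted, reading a column from the bottom upward never decreases the running surplus of $i$'s over $(i+1)$'s by more than is immediately recovered (the only order within a column is $i$ above $i+1$, so in bottom-to-top order the $i+1$ is encountered before the $i$), the minimum of the running count of $(\#i - \#(i+1))$ over all suffixes is attained at a suffix that begins at a column boundary, i.e.\ consists of all of columns $j, j+1, \dots$ for some $j$. The quantity $\#i - \#(i+1)$ over columns $j$ and to the right is exactly $-h_{i,j}(T)$ by the definition in Proposition~\ref{tabstructure}. Therefore $w(T)$ is Yamanouchi with respect to $i, i+1$ if and only if $h_{i,j}(T) \le 0$ for all $j$, which by the description of $e_i$ in Proposition~\ref{tabstructure} (it acts by changing an $i+1$ to an $i$ in the rightmost column where $h_{i,j}(T)$ is maximal and positive, and returns $0$ precisely when no such column exists) holds if and only if $e_i$ vanishes at $T$. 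Dually, $w(T)$ is anti-Yamanouchi with respect to $i, i+1$ if and only if every suffix has at most as many $i$'s as $(i+1)$'s; by the symmetric argument the relevant extremum is controlled by the ``columns $j$ and to the left'' count $k_{i,j}(T)$, and the condition becomes $k_{i,j}(T) \le 0$ for all $j$, which is exactly the vanishing of $f_i$ at $T$.

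I expect the only genuinely delicate point to be the reduction in the second paragraph: one must argue carefully that no suffix \emph{internal} to a column can achieve a strictly smaller value of the running surplus than the column-boundary suffixes, and here the strict increase down columns (which forces the within-column order of $i$ and $i+1$) is exactly what is needed. Once that lemma is in hand, the rest is a direct translation between the combinatorial statistic ($\sum$ of $i$ vs.\ $i+1$ over a tail of columns) and the crystal statistics $h_{i,j}$, $k_{i,j}$, followed by invoking the explicit formulas for $e_i$ and $f_i$ from Proposition~\ref{tabstructure}. Since the paper itself flags this proposition as essentially self-evident and omits the proof, I would keep the write-up to the two observations above and the concluding equivalence, without belaboring the bookkeeping.
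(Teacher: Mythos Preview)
Your argument for the Yamanouchi/$e_i$ half is correct and is exactly the unwinding one would give: the key point that within a column the letter $i{+}1$ is read before $i$ (so partial-column suffixes never dip below the adjacent column-boundary values of the running surplus $\#i-\#(i{+}1)$) is precisely what pins the minimum to a boundary and yields the equivalence with $h_{i,j}(T)\le 0$ for all $j$.

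The anti-Yamanouchi/$f_i$ half, however, does not go through as you have written it. Anti-Yamanouchi is defined in the paper via \emph{suffixes} of $w(T)$, just like Yamanouchi, so the relevant column statistic is again $h_{i,j}$ (columns $j$ and to the right), not $k_{i,j}$; there is no symmetry taking the suffix condition to a ``columns to the left'' condition. In fact, with the definitions taken literally, the equivalence fails in both directions. For the one-row tableau $T=[\,i\ \ i{+}1\,]$ one has $w(T)=i,\,i{+}1$, which is anti-Yamanouchi (both suffixes satisfy $\#i\le\#(i{+}1)$), yet $k_{i,1}(T)=1>0$ and $f_i(T)=[\,i{+}1\ \ i{+}1\,]\ne 0$. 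Conversely, the one-column tableau with $i$ above $i{+}1$ has $w=i{+}1,\,i$, whose suffix $(i)$ violates $\#i\le\#(i{+}1)$, yet $k_{i,1}=0$ and $f_i$ vanishes.

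This is really a slip in the paper's definition rather than in your strategy: what is needed (and what the later use in characterising lowest-weight elements requires) is the \emph{prefix} condition ``every initial segment $w_1,\dots,w_j$ has $\#i\le\#(i{+}1)$''. Under that reading your dual argument becomes correct, because extending a prefix through a column adds the $i{+}1$ before the $i$, so the maximum of $\#i-\#(i{+}1)$ over all prefixes is attained at a column boundary and equals $\max_j k_{i,j}(T)$. You should flag this definitional issue rather than invoke a ``symmetric argument'' that does not match the stated definition.
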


\begin{prop} \label{yamreinterp}
Let $\kappa$ be a partition with $s$ parts, and let $T$ be a tableau of shape $\kappa$.  For all $1 \leq i < i' \leq s-1$, the word of $T$ is Yamanouchi (anti-Yamanouchi) in the subset $\lbrace i, i+1, \ldots, i' \rbrace$ if and only if the raising operators  $e_i, e_{i+1}, \ldots, e_{i'-1}$ (lowering operators $f_i, f_{i+1}, \ldots, f_{i'-1}$) all vanish at $T$.    
\end{prop}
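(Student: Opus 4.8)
The plan is to reduce Proposition~\ref{yamreinterp} to the single-pair statement immediately preceding it, and then to indicate how that single-pair statement is itself established. For the reduction, observe that, by definition, $w(T)$ is Yamanouchi (anti-Yamanouchi) in $\lbrace i, i+1, \ldots, i' \rbrace$ exactly when it is Yamanouchi (anti-Yamanouchi) with respect to every consecutive pair $(j, j+1)$ with $i \leq j \leq i'-1$. The preceding proposition rewrites each of these $i'-i$ conditions as the vanishing of $e_j$ (respectively $f_j$) at $T$, so conjoining them gives precisely the asserted equivalence with the simultaneous vanishing of $e_i, e_{i+1}, \ldots, e_{i'-1}$ (respectively $f_i, f_{i+1}, \ldots, f_{i'-1}$). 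This step is purely formal.

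It thus suffices to prove the single-pair proposition, and here is how I would proceed. Fix $i$. By Proposition~\ref{tabstructure}, $e_i$ acts, if at all, in the rightmost column $j$ at which the statistic $h_{i,j}(T)$ --- the number of $(i+1)$'s in column $j$ or to its right minus the number of $i$'s in that same region --- is positive and maximal, and $e_i(T) = 0$ precisely when $h_{i,j}(T) \leq 0$ for every $j$. Since the reading word scans the columns of $T$ from left to right, the entries lying in column $j$ or to its right are exactly the letters of the suffix of $w(T)$ formed by the entries of columns $j, j+1, \ldots$; hence $-h_{i,j}(T)$ equals the number of $i$'s minus the number of $(i+1)$'s in that suffix. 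Consequently $e_i(T) = 0$ if and only if every column-aligned suffix of $w(T)$ has at least as many $i$'s as $(i+1)$'s, whereas the Yamanouchi condition demands this of every suffix; so the substance of the proposition is the equivalence of these two demands. One implication is immediate. For the other, I would write an arbitrary suffix of $w(T)$ as $PQ$, where $P$ is the contribution of a single column $c$ --- necessarily a block of the smallest entries of that column, since columns are read from bottom to top --- and $Q$ is the contribution of all columns strictly to the right of $c$; then both $Q$ and the suffix consisting of all of column $c$ followed by $Q$ are column-aligned, and a short case analysis, turning on the fact that a column of a semistandard tableau holds at most one $i$ and at most one $i+1$, with the $i$ above the $i+1$, shows that $PQ$ inherits the inequality (the only delicate case being when $P$ contains an $i+1$ but no $i$, which forces column $c$ to contain no $i$ at all and hence forces $Q$ to carry a strict surplus of $i$'s). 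The treatment of anti-Yamanouchi words and the lowering operator $f_i$ is symmetric: one replaces $h_{i,j}$ by $k_{i,j}$, "to the right" by "to the left," and suffixes by the corresponding column-aligned prefixes, running the same argument on the initial segments of $w(T)$.

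The one step that is not mere bookkeeping is the reduction to column-aligned suffixes --- equivalently, the claim that the running tally of $i$'s minus $(i+1)$'s, accumulated from the end of $w(T)$, attains its minimum at a column boundary. This is exactly where semistandardness enters, through the constraint on how $i$ and $i+1$ may appear together in one column, and I expect it to be the only place where any care is needed; even there the case analysis is short. Everything else --- the passage between reading-word regions and column regions, and the assembly of the single-pair statement into the subset statement --- is routine.
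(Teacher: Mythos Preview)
Your reduction of the subset statement to the single-pair proposition via the definition is exactly what the paper intends; the paper declares both propositions ``essentially self-evident'' and omits the proofs entirely, so the formal conjunction you describe is all that is implicitly being invoked. Your further sketch of the single-pair case goes beyond the paper and is correct on the Yamanouchi side; one small caveat on the anti-Yamanouchi side is that the paper's stated definition is phrased in terms of suffixes, not prefixes, so your ``symmetric'' treatment via initial segments does not literally match it --- but your prefix-based reading is the one under which the preceding proposition actually holds (the suffix version, taken at face value, already fails for the one-row tableau $[\,i,\ i{+}1\,]$), so the discrepancy lies in the paper's wording rather than in your argument.
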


\begin{prop} \label{highweightlemma}
Let $T$ be a tableau with entries in $\lbrace 1, 2, \ldots, m \rbrace$ and Yamanouchi reading word.  Then $T$ is of shape $\mu$ if and only if $T$ is of content $\mu$.  
\end{prop}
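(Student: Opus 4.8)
The plan is to mirror the proof of Proposition~\ref{lowweightlemma}, with the highest weight element of a crystal playing the role that the lowest weight element played there. First I would let $\nu$ denote the shape of $T$ and equip the set $B_{\nu}$ of semistandard tableaux of shape $\nu$ with its $\mathfrak{sl_m}$-crystal structure, as in Proposition~\ref{tabstructure}. By Proposition~\ref{hwcrystal}, $B_{\nu}$ is a regular connected crystal, hence a highest weight crystal, and its unique highest weight element is the tableau of shape $\nu$ and content $\nu$.

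Next I would observe that $T$ is itself a highest weight element of $B_{\nu}$. Indeed, since the reading word of $T$ is Yamanouchi in $\lbrace 1, 2, \ldots, m \rbrace$, Proposition~\ref{yamreinterp} (applied across the full range, together with the single-pair case preceding it) shows that each of the raising operators $e_1, e_2, \ldots, e_{m-1}$ vanishes at $T$, which is exactly the condition defining a highest weight element. By uniqueness of the highest weight element of $B_{\nu}$, the tableau $T$ must coincide with the tableau of shape $\nu$ and content $\nu$; in particular, the content of $T$ is $\nu$.

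Having established that any tableau with entries in $\lbrace 1, 2, \ldots, m \rbrace$ and Yamanouchi reading word has content equal to its shape, both implications of the proposition follow immediately: if $T$ has shape $\mu$ then its content equals $\nu = \mu$, and conversely, if $T$ has content $\mu$ then, since that content also equals the shape $\nu$, we conclude $\nu = \mu$, so $T$ has shape $\mu$. I do not expect a real obstacle here; the only point meriting a moment's care is that the crystal structure and the highest-weight characterization must be invoked with respect to the alphabet of size $m$, which is legitimate precisely because every entry of $T$ lies in $\lbrace 1, 2, \ldots, m \rbrace$.
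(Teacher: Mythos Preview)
Your proposal is correct and follows essentially the same approach as the paper: let $\nu$ be the shape of $T$, invoke Proposition~\ref{hwcrystal} to identify the unique highest weight element of the $\mathfrak{sl_m}$-crystal $B_{\nu}$ as the tableau of content $\nu$, and use the Yamanouchi hypothesis to conclude that $T$ is that element. The paper's proof is slightly terser (it does not explicitly cite Proposition~\ref{yamreinterp} or spell out both directions of the biconditional), but the argument is the same.
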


\begin{prop} \label{lowweightlemma}
Let $T$ be a tableau with entries in $\lbrace 1, 2, \ldots, m \rbrace$ and anti-Yamanouchi reading word.  Then $T$ is of shape $\mu$ if and only if $T$ is of content $\overline{\mu}$.  
\end{prop}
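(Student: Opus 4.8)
The plan is to reduce the claim to the following crystal-theoretic fact: among the semistandard tableaux of a fixed shape $\kappa$ with entries in $\lbrace 1,2,\ldots,m\rbrace$, there is exactly one whose reading word is anti-Yamanouchi in $\lbrace 1,2,\ldots,m\rbrace$, and that one has content $\overline{\kappa}$, the reversal of $\kappa$. Granting this, the proposition is immediate: if $T$ has shape $\mu$, then $T$ is \emph{the} anti-Yamanouchi tableau of shape $\mu$, so it has content $\overline{\mu}$; conversely, if $T$ has content $\overline{\mu}$, then, writing $\kappa$ for its shape, the fact forces the content of $T$ to be $\overline{\kappa}$, whence $\overline{\kappa}=\overline{\mu}$ and so $\kappa=\mu$ (both compared as sequences of length $m$, which is legitimate since $|\kappa|=|\overline{\mu}|=|\mu|$).

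To prove the fact, first recast the hypothesis. Regard $\kappa$ as a partition with $m$ parts and give the tableaux of shape $\kappa$ with entries in $\lbrace 1,\ldots,m\rbrace$ the $\mathfrak{sl}_m$-crystal structure $B_{\kappa}$ of Proposition~\ref{hwcrystal}. By the reinterpretation of the Yamanouchi conditions as vanishing conditions on crystal operators (cf. Proposition~\ref{yamreinterp}), the reading word of a tableau $T\in B_{\kappa}$ is anti-Yamanouchi in $\lbrace 1,2,\ldots,m\rbrace$ precisely when the lowering operators $f_1,\ldots,f_{m-1}$ all vanish at $T$; that is, precisely when $T$ is a lowest weight element of $B_{\kappa}$.

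It remains to show that $B_{\kappa}$ has a unique lowest weight element and to compute its content. By Proposition~\ref{hwcrystal}, $B_{\kappa}$ is regular and connected with a \emph{unique} highest weight element, namely the tableau $T_{\kappa}$ of shape $\kappa$ and content $\kappa$. Now apply the Sch\"utzenberger involution $\xi$, which is a shape-preserving involution of $B_{\kappa}$: Proposition~\ref{evacinteracts}(ii) gives $\xi(f_i\cdot S)=e_{m-i}\cdot\xi(S)$ and $\xi(e_i\cdot S)=f_{m-i}\cdot\xi(S)$, so $\xi$ interchanges the lowest weight elements of $B_{\kappa}$ with its highest weight elements. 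Since the highest weight element is unique and $\xi$ is a bijection, the lowest weight element is unique too, equal to $\xi(T_{\kappa})$. Its content is, by Proposition~\ref{evacinteracts}(i), $w_0\cdot\operatorname{wt}(T_{\kappa})=w_0\cdot\kappa=(\kappa_m,\kappa_{m-1},\ldots,\kappa_1)=\overline{\kappa}$, as desired.

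The only point demanding any care is the uniqueness of the lowest weight element; this is not an independent obstacle, but follows formally from the uniqueness of the highest weight element (Proposition~\ref{hwcrystal}) once one observes that $\xi$ conjugates raising operators to lowering operators. The rest is bookkeeping: keeping the number of parts equal to $m$ throughout, so that ``shape'' and ``content'' may be compared as honest length-$m$ sequences.
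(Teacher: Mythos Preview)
Your proof is correct and follows essentially the same route as the paper's: both identify $T$ as the unique lowest weight element of the $\mathfrak{sl}_m$-crystal $B_\nu$ (where $\nu$ is the shape of $T$) and conclude that its content must be $\overline{\nu}$. The only difference is that where the paper writes ``by analogous reasoning'' to assert that $B_\nu$ has a unique lowest weight element of content $\overline{\nu}$, you make this explicit by transporting the highest-weight statement of Proposition~\ref{hwcrystal} via the Sch\"utzenberger involution $\xi$ using Proposition~\ref{evacinteracts}.
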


\section{Proofs of Theorems~\ref{mainevac} and \ref{mainprom}}

In this section, we prove our main theorems.  We start with an overview of the basis of Kazhdan--Lusztig immanants constructed by Skandera \cite{Skandera} for the dual of an irreducible polynomial representation of $GL_s(\mathbb{C})$.  For $\kappa$ a partition with $s$ parts, we note that the action of the long element $w_0 \in \mathfrak{S}_s \subset GL_s(\mathbb{C})$ on the immanants associated to the tableaux of shape $\kappa$ lifts (up to sign) the Sch\"utzenberger involution on the $\mathfrak{sl}_s$-crystal $B_{\kappa}$, and, analogously, that the action of the long cycle $c_s \in \mathfrak{S}_s \subset GL_s(\mathbb{C})$ on immanants lifts (up to sign) \textit{jeu-de-taquin} promotion if $\kappa$ is rectangular.  (The claim for promotion is due to Rhoades \cite{Rhoades}; the author derived in \cite{Rush} the corresponding claim for evacuation from lemmas of Berenstein--Zelevinsky \cite{Berenstein} and Stembridge \cite{Stembridge} by mimicking Rhoades's argument.)  Setting $s := mn$, we then derive the desired conclusions from character computations, drawing upon the background developed in the two preceding sections.  Some familiarity with the character theory of $GL_s(\mathbb{C})$ is assumed.\footnote{The algebraic tools used in this section are developed in greater depth in the original version of this work \cite{Rush}, but even the discussion there is necessarily abbreviated.  For more details on the irreducible polynomial characters of $GL_s(\mathbb{C})$, consult Fulton \cite{Fulton}, Chapter 8.  For more about the Kazhdan--Lusztig basis, see the original paper by Kazhdan and Lusztig \cite{Kazhdan}, or Bj\"orner--Brenti \cite{Bjorner} for an expository account.  The crucial facts concerning the Skandera bases may be found in Rhoades--Skandera \cite{Rhoadess} and Skandera \cite{Skandera}.  The entire section is informed by Rhoades's article ``Cyclic sieving, promotion, and representation theory'' \cite{Rhoades}, to which a considerable intellectual debt is owed and appreciated.}

\begin{thm}[Rhoades \cite{Rhoades}, Rhoades--Skandera \cite{Rhoadess}, Rush \cite{Rush}] \label{skanderabases}
Let $\kappa$ be a partition of $t$ with $s$ parts, and let $V_{\kappa, s}$ be the dual of the irreducible polynomial $GL_s(\mathbb{C})$-representation with highest weight $\kappa$.  For all compositions $\eta$ of $t$ with $s$ parts and semistandard tableaux $U$ of shape $\kappa$ and content $\eta$, let $I_{\eta}(U) \in V_{\kappa, s}$ be the Kazhdan--Lusztig immanant associated to $\eta$ and $U$.\footnote{To construct the \textit{Kazhdan--Lusztig immanant} associated to $\eta$ and $U$, we start with a permutation $w \in \mathfrak{S}_t$, determined by $U$ via the Robinson--Schensted--Knuth algorithm, and we build the polynomial \[\operatorname{Imm}_w(x) := \sum_{v \geq w} (-1)^{\ell(v) - \ell(w)} P_{w_0 v, w_0 w}(1) x_{1,v(1)} x_{2, v(2)} \cdots x_{t, v(t)} \in \operatorname{Sym}((\mathbb{C}^t)^* \otimes (\mathbb{C}^t)^*), \] where $P_{w_0 v, w_0 w}(q)$ is the Kazhdan--Lusztig polynomial associated to the (ordered) pair $w_0 v, w_0 w$.  The composition $\eta$ determines a map $\lbrace 1, 2, \ldots, t \rbrace \rightarrow \lbrace 1, 2 \ldots, s \rbrace$, which induces a map $((\mathbb{C}^t)^* \otimes (\mathbb{C}^t)^*) \rightarrow ((\mathbb{C}^s)^* \otimes (\mathbb{C}^t)^*)$, and we denote the image of $\operatorname{Imm}_w(x)$ by $\operatorname{Imm}_w(x_{\eta})$.  Then $I_{\eta}(U)$ is in turn the image of $\operatorname{Imm}_w(x_{\eta})$ in $V_{\kappa, s}$.  \textit{Caveat lector}: We refer to this image as $I_{\eta}(U')$ in the notation of Rush \cite{Rush}, where $U'$ denotes the row-strict tableau conjugate to $U$.}

Set \[I_{\eta} := \lbrace I_{\eta}(U) : U \text{ is a semistandard tableau of shape $\kappa$ and content $\eta$} \rbrace.\]  Then the following claims hold.  
\begin{enumerate}
\item[(i)] The set $\bigcup_{\eta} I_{\eta}$, where $\eta$ ranges over all compositions of $t$ with $s$ parts, constitutes a basis for $V_{\kappa,s}$.  
\item[(ii)] For all compositions $\eta$ of $t$ with $s$ parts, the set $I_{\eta}$ constitutes a basis for the weight space of $V_{\kappa,s}$ corresponding to the weight $-\eta$, which we denote by $V_{\kappa,s,\eta}$.    
\item[(iii)]
Let $w_0$ be the long element in $\mathfrak{S}_s$, and let $\xi$ be the Sch\"utzenberger involution.  Let $a$ be the number of positive parts of $\kappa$, and write $v(\kappa)$ for the sum $\sum_{i=1}^a (i-1) \kappa_i$.  Then \[w_0 \cdot I_{\eta}(U) = (-1)^{v(\kappa)} \cdot I_{w_0 \cdot \eta}(\xi(U)).\]
\item[(iv)]
Let $c_s$ be the long cycle in $\mathfrak{S}_s$, and let $\operatorname{pr}$ be \textnormal{jeu-de-taquin} promotion.  Let $a$ be the number of positive parts of $\kappa$.  If $\kappa$ is rectangular, then \[c_s \cdot I_{\eta}(U) = (-1)^{\eta_s(a-1)} \cdot I_{c_s \cdot \eta}(\operatorname{pr}(U)).\]

\end{enumerate}
\end{thm}

\subsection{Proof of Theorem~\ref{mainevac}}

Let $\lambda$ be a partition with $2m$ parts, and let $\mu$ be a partition of $|\lambda|/2$ with $m$ parts such that $\mu_i \leq \lambda_i$ for all positive parts $\mu_i$ of $\mu$.  Denote the composition $(\mu_m, \mu_{m-1}, \ldots, \mu_1, \mu_1, \mu_2, \ldots, \mu_m)$ by $\overline{\mu}{\mu}$.  Write $\textnormal{Tab}(\lambda, \overline{\mu}{\mu})$ for the set of semistandard tableaux of shape $\lambda$ and content $\overline{\mu} \mu$, and $\textnormal{EYTab}(\lambda, \overline{\mu}{\mu})$ for the subset of $\textnormal{Tab}(\lambda, \overline{\mu}{\mu})$ consisting of those tableaux with reading word anti-Yamanouchi in $\lbrace 1, 2, \ldots, m \rbrace$ and Yamanouchi in $\lbrace m+1, m+2, \ldots, 2m \rbrace$.  

\begin{rem}\label{departure}
Again we depart from the standard convention of identifying compositions that differ only by terminal zeroes, but, given partitions $\lambda$ and $\mu$, we may choose $m$ so that $\lambda$ and $\mu$ have at most $2m$ and $m$ positive parts, respectively, and declare $\lambda$ and $\mu$ to have $2m$ and $m$ parts, respectively.  It should be clear that the choice of $m$ does \textit{not} affect the cardinalities of the tableaux sets in question.
\end{rem}

Let $B_{\lambda}$ be the set of semistandard tableaux of shape $\lambda$, endowed with a $\mathfrak{gl}_{2m}$-crystal structure in accordance with Proposition~\ref{tabstructure}.  The key to our proof is the assignment of a $(\mathfrak{gl}_m \oplus \mathfrak{gl}_m)$-crystal structure to $B_{\lambda}$ that allows us to inspect the action of $\xi$ on its connected components.  This provides a combinatorial model for the decomposition into irreducible components of the restriction to $GL_m(\mathbb{C}) \times GL_m(\mathbb{C})$ of the irreducible $GL_{2m}(\mathbb{C})$-representation with highest weight $\lambda$, which underlies our character evaluation.  

Recall that we chose $\lbrace E_1 - E_2, E_2 - E_3, \ldots, E_{2m-1} - E_{2m} \rbrace$ as the set of simple roots for $\mathfrak{gl}_{2m}$.  Here we choose $\lbrace E_2-E_1, E_3-E_2, \ldots, E_m-E_{m-1}, E_{m+1} -E_{m+2}, E_{m+2} - E_{m+3}, \ldots, E_{2m-1}-E_{2m} \rbrace$ as the set of simple roots for $\mathfrak{gl}_m \oplus \mathfrak{gl}_m$.    

\begin{prop} \label{evaccrysrestrict}
The set $B_{\lambda}$ equipped with the map $\operatorname{wt}$, the set of raising operators $\lbrace f_1, f_2, \ldots, f_{m-1}, e_{m+1}, e_{m+2}, \ldots, e_{2m-1} \rbrace$, and the set of lowering operators $\lbrace e_1, e_2, \ldots, e_{m-1}, f_{m+1}, f_{m+2}, \ldots, f_{2m-1} \rbrace$, is a regular $(\mathfrak{gl}_m \oplus \mathfrak{gl}_m)$-crystal. 
\end{prop}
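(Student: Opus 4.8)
The plan is to verify directly that the proposed operators satisfy Definition~\ref{whatiscrystal}(i)--(iii) and Definition~\ref{regular}(iv)--(vib), leveraging the fact that $B_{\lambda}$ is already known to be a regular $\mathfrak{sl_{2m}}$-crystal via Propositions~\ref{tabstructure} and \ref{hwcrystal}. The starting observation is that a simply laced semisimple Lie algebra decomposes as a direct sum of simple components indexed by the connected components of its Dynkin diagram, and a crystal for such a Lie algebra is precisely a set carrying, for each node, a compatible pair of operators. Our chosen simple roots for $\mathfrak{sl_m} \oplus \mathfrak{sl_m}$ come in two batches: the ``first'' $\mathfrak{sl_m}$ uses nodes $1, \ldots, m-1$ but with the roles of raising and lowering reversed (simple roots $E_{i+1} - E_i$ rather than $E_i - E_{i+1}$), and the ``second'' $\mathfrak{sl_m}$ uses nodes $m+1, \ldots, 2m-1$ with the standard orientation. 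So the task splits into three essentially independent verifications: that the reversed operators on nodes $1, \ldots, m-1$ form a regular $\mathfrak{sl_m}$-crystal, that the unaltered operators on nodes $m+1, \ldots, 2m-1$ do likewise, and that the two families commute appropriately (the Dynkin diagram of $\mathfrak{sl_m} \oplus \mathfrak{sl_m}$ has no edge between a node $\leq m-1$ and a node $\geq m+1$, so conditions (va)--(vib) in that cross range are vacuous, while (iv) and the axioms (i)--(iii) must still be checked).

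First I would dispatch the second $\mathfrak{sl_m}$ summand: restricting the $\mathfrak{sl_{2m}}$-crystal operators $e_{m+1}, \ldots, e_{2m-1}$ and $f_{m+1}, \ldots, f_{2m-1}$ to a consecutive block of nodes is a standard operation that preserves regularity, because conditions (i)--(vib) only ever reference a single node or an adjacent pair, all of which survive passage to a sub-diagram; one only needs to note that the weight map should be composed with the projection $W \to$ (weight lattice of $\mathfrak{sl_m}$) induced by forgetting the coordinates outside $\{m+1, \ldots, 2m\}$, and that the axioms hold because they already held inside $\mathfrak{sl_{2m}}$. Next, for the first summand, I would observe that swapping $e_i \leftrightarrow f_i$ on a single $\mathfrak{sl_m}$-crystal while negating the relevant weight coordinates produces another $\mathfrak{sl_m}$-crystal: this is the passage to the ``dual'' or ``flipped'' crystal. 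Concretely, axiom (i) is symmetric under the swap once one negates $\langle \operatorname{wt}(b), \alpha_i^\vee \rangle$ (which the new simple roots $\alpha_i = E_{i+1}-E_i$ accomplish), axiom (iii) is symmetric outright, axiom (ii) transforms correctly because $-\alpha_i$ becomes $\alpha_i$, and for regularity one checks that the $\Delta/\nabla$ quantities merely trade places under the swap, so (iv) is preserved and (va)$\leftrightarrow$(vb), (via)$\leftrightarrow$(vib). Here it is cleanest to phrase everything as: the flipped structure on $B_\nu$ is isomorphic (as a set with operators) to the original structure on the crystal of the contragredient representation, hence regular. Applying this to the restriction of the $\mathfrak{sl_{2m}}$-crystal to nodes $1, \ldots, m-1$ yields the claim for the first summand.

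The third ingredient is the commutation of the two families across the gap. I would check that for $i \leq m-1$ and $j \geq m+1$ the operators $e_i, f_i$ commute with $e_j, f_j$ as partial functions on $B_\lambda$. This is where I expect the only genuine work to lie, though it should still be light: in the tableau model of Proposition~\ref{tabstructure}, $e_i$ and $f_i$ only ever modify entries in $\{i, i+1\}$ and depend only on the relative positions of $i$'s and $(i{+}1)$'s, while $e_j, f_j$ touch entries in $\{j, j+1\}$; since $\{i, i+1\}$ and $\{j, j+1\}$ are disjoint when $i \leq m-1 < m+1 \leq j$, the two actions operate on disjoint portions of the tableau and visibly commute. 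One must be slightly careful that applying $e_j$ does not disturb the column-maximality data governing $e_i$, but since the statistics $h_{i,\cdot}$ and $k_{i,\cdot}$ count only $i$'s and $(i{+}1)$'s, they are unaffected by relabeling $j$'s and $(j{+}1)$'s, and vice versa. Given this commutation, axioms (i)--(iii) for the combined structure follow node by node from the two summand verifications, and the cross-node regularity conditions (va)--(vib) are vacuous because nodes $i$ and $j$ are non-adjacent in the $\mathfrak{sl_m} \oplus \mathfrak{sl_m}$ Dynkin diagram, while (iv) across the gap reduces to the assertion that $\epsilon_i, \phi_i$ are unchanged by $e_j, f_j$ and conversely, which is exactly the disjoint-alphabets observation again. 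Assembling the three pieces completes the proof. The main obstacle, such as it is, is bookkeeping: being meticulous about which nodes carry which orientation and confirming that each of (i)--(vib) is either inherited from $\mathfrak{sl_{2m}}$, obtained by the flip symmetry, or rendered vacuous by non-adjacency.
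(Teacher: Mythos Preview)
Your proposal is correct and follows essentially the same approach as the paper's proof: verify the crystal axioms directly, reduce cross-node regularity to the commutation of operators indexed by $\{1,\ldots,m-1\}$ with those indexed by $\{m+1,\ldots,2m-1\}$, inherit regularity on the second block from the ambient $\mathfrak{sl_{2m}}$-crystal, and obtain regularity on the first block via the flip symmetry that swaps $e_i \leftrightarrow f_i$ (which the paper phrases as interchanging $\Delta_i\epsilon_j$ with $\nabla_i\phi_j$ and $\Delta_i\phi_j$ with $\nabla_i\epsilon_j$, citing Stembridge). One small terminological slip: conditions (va)--(vb) across the gap are not strictly vacuous, since $\Delta_i\epsilon_j(b)=0$ does occur; rather, their conclusions are exactly the commutation $e_ie_jb=e_je_ib$ and the invariance of $\phi_i$ under $f_j$, both of which you do establish via your disjoint-alphabets argument, so the substance is fine.
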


\begin{proof}
It is a simple matter to verify that the conditions of Definition~\ref{whatiscrystal} hold for $\mathfrak{g} = \mathfrak{gl}_m \oplus \mathfrak{gl}_m$ with the indicated choice of simple roots.  Hence $B_{\lambda}$ is a $(\mathfrak{gl}_m \oplus \mathfrak{gl}_m)$-crystal.  Furthermore, drawing any two operators from distinct sets among \[\lbrace e_1, e_2, \ldots, e_{m-1}, f_1, f_2, \ldots, f_{m-1} \rbrace\] and \[\lbrace e_{m+1}, e_{m+2}, \ldots, e_{2m-1}, f_{m+1}, f_{m+2}, \ldots, f_{2m-1} \rbrace\] yields a commuting pair, so the regularity of $B_{\lambda}$ as a $\mathfrak{gl}_m \oplus \mathfrak{gl}_m$-crystal follows from its regularity as a $\mathfrak{gl}_{2m}$-crystal (interchanging $e_i$ and $f_i$ for all $1 \leq i \leq m-1$ interchanges $\Delta_i\epsilon_j$ with $\nabla_i \phi_j$ and $\Delta_i \phi_j$ with $\nabla_i \epsilon_j$ for all $1 \leq i,j \leq m-1$ --- cf. Stembridge \cite{Stemlocal}, p. 4809 --- so it does not affect the regularity of $B_{\lambda}$).  
\end{proof}

Each tableau in $B_{\lambda}$ is made up of two ``subtableaux'': a tableau with entries in $\lbrace 1, 2 \ldots, m \rbrace$ and a skew tableau with entries in $\lbrace m+1, m+2, \ldots, 2m \rbrace$.  These subtableaux do not interact with each other under any of the raising and lowering $(\mathfrak{gl}_m \oplus \mathfrak{gl}_m)$-crystal operators, so it is worthwhile to consider them independently.  

\begin{df}
For all tableaux $T \in B_{\lambda}$, let $\varphi_0(T)$ be the tableau obtained from $T$ by removing each box with an entry not in $\lbrace 1, 2, \ldots, m \rbrace$, and let $\varphi_1(T)$ be the skew tableau obtained from $T$ by removing each box with an entry not in $\lbrace m+1, m+2, \ldots, 2m \rbrace$, and reducing modulo $m$ the entry in each remaining box, so that the entries of $\varphi_1(T)$ are also among $1, 2, \ldots, m$.  Let $\varphi(T)$ be the ordered pair of tableaux $(\varphi_0(T), \operatorname{Rect}(\varphi_1(T)))$.  
\end{df}

\begin{prop} \label{ehighweights}
Let $\mathcal{C}$ be a connected component of the $(\mathfrak{gl}_m \oplus \mathfrak{gl}_m)$-crystal $B_{\lambda}$.  Then $\mathcal{C}$ is a highest weight crystal.  Furthermore, if $b$ is the unique highest weight element of $\mathcal{C}$, then there exist partitions $\beta = (\beta_1, \beta_2, \ldots, \beta_m)$ and $\gamma = (\gamma_1, \gamma_2, \ldots, \gamma_m)$ such that $b$ is of content $\overline{\beta} \gamma$ and $\varphi(b) = (b_{\overline{\beta}}, b_{\gamma})$, where $b_{\overline{\beta}}$ is the unique tableau of shape $\beta$ and content $\overline{\beta}$, and $b_{\gamma}$ is the unique tableau of shape $\gamma$ and content $\gamma$.  
\end{prop}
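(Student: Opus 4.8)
The plan is to get the first claim almost for free from the crystal-theoretic machinery of Section~3, and then to pin down $\beta$ and $\gamma$ by examining a highest weight element one $\mathfrak{sl}_m$-factor at a time, exploiting the fact (already noted in the text) that the two sub-tableaux of an element of $B_{\lambda}$ are acted upon independently by the two families of operators.

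For the highest-weight assertion, Proposition~\ref{evaccrysrestrict} tells us that $B_{\lambda}$, equipped with the indicated operators, is a regular $(\mathfrak{sl}_m \oplus \mathfrak{sl}_m)$-crystal; every condition in Definitions~\ref{whatiscrystal} and \ref{regular} refers only to an element together with its images under successive crystal operators, so all of them are inherited by any connected component. Thus $\mathcal{C}$ is a regular connected crystal for the simply-laced algebra $\mathfrak{sl}_m \oplus \mathfrak{sl}_m$, and Proposition~\ref{regconequiv} makes it a highest weight crystal. Write $b$ for its unique highest weight element; by definition the raising operators $f_1, \ldots, f_{m-1}$ and $e_{m+1}, \ldots, e_{2m-1}$ all vanish at $b$.

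For the structure of $b$, I would first observe that in any semistandard tableau of shape $\lambda$ the boxes carrying entries $\le m$ form a sub-Young-diagram, so $\varphi_0(b)$ has a genuine straight shape $\beta = (\beta_1, \ldots, \beta_m) \subseteq \lambda$ and $\varphi_1(b)$ is a skew tableau of shape $\lambda/\beta$. The operators $e_i, f_i$ for $1 \le i \le m-1$ alter only entries in $\{1, \ldots, m\}$, so they act on $b$ entirely through $\varphi_0(b)$, which I would regard as an element of the $\mathfrak{sl}_m$-crystal $B_{\beta}$ of Proposition~\ref{hwcrystal}; the vanishing of $f_1, \ldots, f_{m-1}$ then says $\varphi_0(b)$ is the lowest weight element of $B_{\beta}$, which by the reasoning in the proof of Proposition~\ref{lowweightlemma} has content $\overline{\beta}$, so $\varphi_0(b) = b_{\overline{\beta}}$. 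Symmetrically, $e_i, f_i$ for $m+1 \le i \le 2m-1$ act on $b$ through $\varphi_1(b)$, which after subtracting $m$ from every entry is an ordinary skew $\mathfrak{sl}_m$-crystal element; by Corollary~\ref{rectcommutes} the vanishing of $e_{m+1}, \ldots, e_{2m-1}$ at $b$ passes to $\operatorname{Rect}(\varphi_1(b))$, so, writing $\gamma = (\gamma_1, \ldots, \gamma_m)$ for its straight shape, $\operatorname{Rect}(\varphi_1(b))$ is the highest weight element of $B_{\gamma}$ and hence, by Proposition~\ref{hwcrystal}, the unique tableau of shape $\gamma$ and content $\gamma$; that is, $\operatorname{Rect}(\varphi_1(b)) = b_{\gamma}$ and $\varphi(b) = (b_{\overline{\beta}}, b_{\gamma})$. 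Finally, reading the content of $b$ separately on $\{1, \ldots, m\}$ and on $\{m+1, \ldots, 2m\}$ recovers $\overline{\beta}$ from $\varphi_0(b)$ and, since jeu de taquin preserves content, $\gamma$ from $\varphi_1(b)$, so $b$ is of content $\overline{\beta}\gamma$.

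The one place that needs care — and really the only obstacle — is making precise the sense in which the $(\mathfrak{sl}_m \oplus \mathfrak{sl}_m)$-operators restrict to ordinary $\mathfrak{sl}_m$-crystal operators on the two sub-tableaux, including the index shift by $m$ and the rectification bookkeeping for the skew piece $\varphi_1(b)$; this rests on the ``subtableaux do not interact'' remark already in the text together with Propositions~\ref{tabstructure} and \ref{jdtcommutes} and Corollary~\ref{rectcommutes}. Once that identification is in place, everything else is an immediate appeal to Propositions~\ref{hwcrystal} and \ref{lowweightlemma}.
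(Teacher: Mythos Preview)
Your proposal is correct and follows essentially the same approach as the paper's proof: both invoke Proposition~\ref{regconequiv} for the highest-weight claim, and both identify $\varphi_0(b)$ and $\operatorname{Rect}(\varphi_1(b))$ via the lowest/highest weight characterizations underlying Propositions~\ref{lowweightlemma} and \ref{highweightlemma} (with Corollary~\ref{rectcommutes} for the skew piece). The only difference is cosmetic---you phrase the vanishing conditions in terms of crystal operators while the paper passes through the equivalent Yamanouchi language of Proposition~\ref{yamreinterp}---and you spell out more of the bookkeeping than the paper does.
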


\begin{proof}
Since $\mathcal{C}$ is a regular, connected crystal, it follows from Proposition~\ref{regconequiv} that $\mathcal{C}$ is a highest weight crystal.  Let $b$ be the unique highest weight element of $\mathcal{C}$.  Recall that $b$ is anti-Yamanouchi in $\lbrace 1, 2, \ldots, m \rbrace$ and Yamanouchi in $\lbrace m+1, m+2, \ldots, 2m \rbrace$.  From Proposition~\ref{lowweightlemma}, we see that there exists a partition $\beta = (\beta_1, \beta_2, \ldots, \beta_m)$ such that $\varphi_0(b) = b_{\overline{\beta}}$, and, from Proposition~\ref{highweightlemma} (in view of Corollary~\ref{rectcommutes}), we see that there exists a partition $\gamma = (\gamma_1, \gamma_2, \ldots, \gamma_m)$ such that $\operatorname{Rect}(\varphi_1(T)) = b_{\gamma}$.  
\end{proof}

\begin{prop} \label{eprodcrystal}
Let $\beta$ and $\gamma$ be partitions, each with $m$ parts.  Equip the set $B_{(\overline{\beta}, \gamma)} := B_{\beta} \times B_{\gamma}$ with the map $\operatorname{wt} \times \operatorname{wt}$.  For all $1 \leq i \leq m-1$, let $e_i$ and $f_i$ act as the $\mathfrak{gl}_m$-crystal operators $e_i$ and $f_i$, respectively, on $B_{\beta}$ and as the identity on $B_{\gamma}$.  For all $m+1 \leq i \leq 2m-1$, let $e_i$ and $f_i$ act as the identity on $B_{\beta}$ and as the $\mathfrak{gl}_m$-crystal operators $e_{i-m}$ and $f_{i-m}$, respectively, on $B_{\gamma}$.  Then $B_{(\overline{\beta}, \gamma)}$, together with the set of raising operators $\lbrace f_1, f_2, \ldots, f_{m-1}, e_{m+1}, e_{m+2}, \ldots, e_{2m-1} \rbrace$ and the set of lowering operators $\lbrace e_1, e_2, \ldots, e_{m-1}, f_{m+1}, f_{m+2}, \ldots, f_{2m-1} \rbrace$, is a regular, connected $(\mathfrak{gl}_m \oplus \mathfrak{gl}_m)$-crystal with unique highest weight element $(b_{\overline{\beta}}, b_{\gamma})$.
\end{prop}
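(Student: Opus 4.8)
The plan is to identify $B_{(\overline{\beta}, \gamma)} = B_{\beta} \times B_{\gamma}$ as the external direct product of two regular connected $\mathfrak{sl_m}$-crystals, one attached to each summand of $\mathfrak{sl_m} \oplus \mathfrak{sl_m}$, and to read off its highest weight element from the two factors. The only genuinely new ingredient is the treatment of the first factor, on which the operators have been swapped relative to the standard tableau crystal; everything else is routine bookkeeping about products of crystals over a direct sum Lie algebra.

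First I would equip $B_{\beta}$ with the \emph{reversed} $\mathfrak{sl_m}$-crystal structure, taking the simple roots to be $E_2 - E_1, E_3 - E_2, \ldots, E_m - E_{m-1}$, the raising operators to be $f_1, f_2, \ldots, f_{m-1}$, and the lowering operators to be $e_1, e_2, \ldots, e_{m-1}$. With the \emph{same} weight map $\operatorname{wt}$, conditions (i)--(iii) of Definition~\ref{whatiscrystal} reduce to the corresponding conditions for the standard structure on $B_{\beta}$, since each reversed simple root and its coroot are the negatives of the standard ones; and regularity follows from the regularity of $B_{\beta}$ together with the observation, already invoked in the proof of Proposition~\ref{evaccrysrestrict}, that interchanging $e_i \leftrightarrow f_i$ interchanges $\Delta_i \epsilon_j \leftrightarrow \nabla_i \phi_j$ and $\Delta_i \phi_j \leftrightarrow \nabla_i \epsilon_j$, and so carries the axioms (iv)--(vib) of Definition~\ref{regular} to one another (cf. Stembridge \cite{Stemlocal}, p. 4809). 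The underlying graph is unchanged, so the reversed crystal is connected, hence a highest weight crystal by Proposition~\ref{regconequiv}; its highest weight element is the element annihilated by every $f_i$, that is, the lowest weight element of the standard $B_{\beta}$, which by the reasoning of Proposition~\ref{lowweightlemma} is $b_{\overline{\beta}}$. In parallel, $B_{\gamma}$ with its standard $\mathfrak{sl_m}$-structure and indices shifted up by $m$ is a regular connected crystal with highest weight element $b_{\gamma}$.

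It then remains to assemble the two factors. The operators indexed by $1 \leq i \leq m-1$ act only on the $B_{\beta}$ coordinate and those indexed by $m+1 \leq i \leq 2m-1$ act only on the $B_{\gamma}$ coordinate, so they pairwise commute; and since the corresponding simple roots of $\mathfrak{sl_m} \oplus \mathfrak{sl_m}$ are orthogonal, the cross cases of axioms (iv)--(vib) hold trivially (one has $\Delta_i \epsilon_j(b) = \nabla_i \phi_j(b) = 0$ because $\epsilon_j$ and $\phi_j$ depend only on the factor left untouched by $e_i$ and $f_i$), while the within-summand cases hold by the regularity established above. Hence $B_{(\overline{\beta}, \gamma)}$ is a regular $(\mathfrak{sl_m} \oplus \mathfrak{sl_m})$-crystal, and it is connected because each of its two factors is. Finally, $(b_{\overline{\beta}}, b_{\gamma})$ is killed by every raising operator of the assembled crystal, and by Proposition~\ref{regconequiv} it is the unique such element, so $B_{(\overline{\beta}, \gamma)}$ is a highest weight crystal with highest weight element $(b_{\overline{\beta}}, b_{\gamma})$. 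I expect the reversal step of the previous paragraph to be the main obstacle --- verifying that swapping $e_i$ and $f_i$ on $B_{\beta}$ preserves regularity and promotes the old lowest weight element to a highest weight element --- but this is precisely the device already used for Proposition~\ref{evaccrysrestrict}, so no essentially new difficulty arises.
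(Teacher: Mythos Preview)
Your proposal is correct and follows essentially the same approach as the paper's proof: both verify the crystal axioms, dispatch the cross-summand regularity conditions by commutativity, handle the $B_{\gamma}$ factor by its standard regularity, handle the reversed $B_{\beta}$ factor via the same interchange observation ($e_i \leftrightarrow f_i$ swaps $\Delta_i\epsilon_j \leftrightarrow \nabla_i\phi_j$ and $\Delta_i\phi_j \leftrightarrow \nabla_i\epsilon_j$) already invoked in Proposition~\ref{evaccrysrestrict}, and identify the highest weight element via Propositions~\ref{lowweightlemma} and~\ref{highweightlemma}. Your write-up is more explicit about the ``reversed'' crystal structure on $B_{\beta}$ as an intermediate object, but the substance is the same.
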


\begin{proof}
It is apparent that $B_{(\overline{\beta}, \gamma)}$ is a $(\mathfrak{gl}_m \oplus \mathfrak{gl}_m)$-crystal.  The regularity of $B_{(\overline{\beta}, \gamma)}$ follows from the regularity of $B_{\beta}$ and $B_{\gamma}$ as $\mathfrak{gl}_m$-crystals (as above interchanging $e_i$ and $f_i$ for all $1 \leq i \leq m-1$ interchanges $\Delta_i\epsilon_j$ with $\nabla_i \phi_j$ and $\Delta_i \phi_j$ with $\nabla_i \epsilon_j$ for all $1 \leq i,j \leq m-1$, so it does not affect the regularity of $B_{(\overline{\beta}, \gamma)}$).  

The claim that $(b_{\overline{\beta}}, b_{\gamma})$ is the unique highest weight element of $B_{(\overline{\beta},\gamma)}$ is a direct consequence of Propositions~\ref{lowweightlemma} and ~\ref{highweightlemma}.    
\end{proof}

Thus, if $\mathcal{C}$ is a connected component of $B_{\lambda}$, there exist partitions $\beta$ and $\gamma$ for which the unique highest weight element of $\mathcal{C}$ corresponds to that of $B_{(\overline{\beta}, \gamma)}$.  In fact, the two crystals are structurally identical.  

\begin{thm} \label{evachwcrystal}
Let $\mathcal{C}$ be a connected component of the $(\mathfrak{gl}_m \oplus \mathfrak{gl}_m)$-crystal $B_{\lambda}$.  Let $b$ be the unique highest weight element of $\mathcal{C}$, and let $\beta$ and $\gamma$ be partitions, each with $m$ parts, for which $\varphi(b) = (b_{\overline{\beta}}, b_{\gamma})$.  Then $\varphi$ restricts to an isomorphism of crystals $\mathcal{C} \xrightarrow{\sim} B_{(\overline{\beta}, \gamma)}$.  
\end{thm}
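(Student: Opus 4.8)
The plan is to identify $\mathcal{C}$ and $B_{(\overline{\beta},\gamma)}$ as regular connected $(\mathfrak{sl}_m\oplus\mathfrak{sl}_m)$-crystals sharing a highest weight, invoke Proposition~\ref{regconiso} to see that they are abstractly isomorphic (hence equinumerous), and then check that the concrete map $\varphi$ is a crystal morphism carrying the highest weight element $b$ of $\mathcal{C}$ to the highest weight element $(b_{\overline{\beta}},b_{\gamma})$ of $B_{(\overline{\beta},\gamma)}$; a surjectivity-plus-counting argument will then upgrade $\varphi$ to the desired isomorphism. Concretely, $\mathcal{C}$ is a connected component of the regular $(\mathfrak{sl}_m\oplus\mathfrak{sl}_m)$-crystal $B_{\lambda}$ (Proposition~\ref{evaccrysrestrict}), hence is itself regular and connected, with unique highest weight element $b$ of content $\overline{\beta}\gamma$ by Proposition~\ref{ehighweights}; and $B_{(\overline{\beta},\gamma)}$ is regular and connected with highest weight element $(b_{\overline{\beta}},b_{\gamma})$ of weight $(\overline{\beta},\gamma)$ by Proposition~\ref{eprodcrystal}. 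Since $\mathfrak{sl}_m\oplus\mathfrak{sl}_m$ is simply laced and the two highest weight elements have the same weight, hence the same values $\phi_i$, Proposition~\ref{regconiso} yields an abstract isomorphism $\mathcal{C}\cong B_{(\overline{\beta},\gamma)}$; in particular $|\mathcal{C}|=|B_{(\overline{\beta},\gamma)}|$.

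Next I would verify that $\varphi$ really does restrict to a map $\mathcal{C}\to B_{(\overline{\beta},\gamma)}$, i.e.\ that $\varphi_0(T)$ has shape $\beta$ and $\operatorname{Rect}(\varphi_1(T))$ has shape $\gamma$ for \emph{every} $T\in\mathcal{C}$, not merely for $b$. The key observation is that these two shapes are constant on $\mathcal{C}$: the operators $f_1,\dots,f_{m-1},e_1,\dots,e_{m-1}$ alter only entries in $\lbrace 1,\dots,m\rbrace$, so they leave $\varphi_1(T)$ unchanged and preserve the shape of $\varphi_0(T)$, whereas the operators $e_{m+1},\dots,e_{2m-1},f_{m+1},\dots,f_{2m-1}$ alter only entries in $\lbrace m+1,\dots,2m\rbrace$, so they leave $\varphi_0(T)$ unchanged and, since rectification is a composition of \textit{jeu-de-taquin} slides and hence commutes with the crystal operators by Proposition~\ref{jdtcommutes}, they preserve the shape of $\operatorname{Rect}(\varphi_1(T))$. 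As $\mathcal{C}$ is connected and the shapes are $\beta$ and $\gamma$ at $b$ (Proposition~\ref{ehighweights}), they are $\beta$ and $\gamma$ at every $T\in\mathcal{C}$.

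The step I expect to require the most care is showing that $\varphi|_{\mathcal{C}}$ intertwines the crystal operators. For $1\le i\le m-1$ I would note that in a semistandard tableau the entries $\le m$ occupy the top of each column, so the reading word of $\varphi_0(T)$ is precisely the subword of $w(T)$ consisting of the letters in $\lbrace 1,\dots,m\rbrace$; the Kashiwara--Nakashima bracketing rule of Proposition~\ref{tabstructure} for the pair $\lbrace i,i+1\rbrace$ therefore reads the same data from $T$ and from $\varphi_0(T)$, so $\varphi_0\circ e_i=e_i\circ\varphi_0$ and $\varphi_0\circ f_i=f_i\circ\varphi_0$, while $\varphi_1(T)$ is unchanged; this matches exactly the action of the product-crystal operators on $B_\beta\times B_\gamma$, which act by $e_i,f_i$ on the first factor and trivially on the second. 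Symmetrically, for $m+1\le i\le 2m-1$ the entries $>m$ occupy the bottom of each column, so $w(\varphi_1(T))$ is the subword of $w(T)$ on the letters $\lbrace m+1,\dots,2m\rbrace$ after the shift by $m$; hence $\varphi_1$ intertwines $e_i,f_i$ with the index-shifted operators $e_{i-m},f_{i-m}$, and composing with rectification, which commutes with these operators by Proposition~\ref{jdtcommutes}, shows that $\operatorname{Rect}\circ\varphi_1$ intertwines $e_i,f_i$ with the second-factor operators, while $\varphi_0(T)$ is unchanged. Thus $\varphi|_{\mathcal{C}}$ is a morphism of $(\mathfrak{sl}_m\oplus\mathfrak{sl}_m)$-crystals.

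Finally, $\varphi(b)=(b_{\overline{\beta}},b_{\gamma})$ is the highest weight element of $B_{(\overline{\beta},\gamma)}$, so the image of the connected crystal $\mathcal{C}$ under the crystal morphism $\varphi|_{\mathcal{C}}$ is the connected component of $(b_{\overline{\beta}},b_{\gamma})$, which is all of $B_{(\overline{\beta},\gamma)}$ by Proposition~\ref{eprodcrystal}; hence $\varphi|_{\mathcal{C}}$ is surjective. A surjection between finite sets of equal cardinality is a bijection, and a bijective morphism of crystals is an isomorphism by Definition~\ref{whatismorphism}, so $\varphi$ restricts to an isomorphism $\mathcal{C}\xrightarrow{\sim}B_{(\overline{\beta},\gamma)}$, as claimed.
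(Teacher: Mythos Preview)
Your proof is correct and follows essentially the same strategy as the paper: verify that $\varphi|_{\mathcal{C}}$ is a crystal morphism (via Proposition~\ref{jdtcommutes}), invoke Proposition~\ref{regconiso} to get an abstract isomorphism $\mathcal{C}\cong B_{(\overline{\beta},\gamma)}$, and then conclude that $\varphi|_{\mathcal{C}}$ is itself an isomorphism. The only cosmetic differences are that you fill in more detail than the paper does (explicitly checking that shapes are constant on $\mathcal{C}$ and spelling out the intertwining via reading words), and your final step upgrades $\varphi|_{\mathcal{C}}$ to an isomorphism by surjectivity plus cardinality, whereas the paper invokes the fact that a morphism between regular connected crystals is determined by its value at the highest weight element.
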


\begin{proof}
The content of $b$ is $\overline{\beta} \gamma$, so $\operatorname{wt}(b) = \operatorname{wt}(b_{\overline{\beta}}, b_{\gamma})$.  Furthermore, the equality $\phi_i(b) = \phi_i(b_{\overline{\beta}}, b_{\gamma})$ holds for all $1 \leq i \leq m-1$ by definition of $\varphi_0$, and it holds for all $m+1 \leq i \leq 2m-1$ by definition of $\varphi_1$ in view of Corollary~\ref{rectcommutes}.  Thus, Proposition~\ref{regconiso} tells us that $\mathcal{C}$ and $B_{(\overline{\beta}, \gamma)}$ are isomorphic.  

Since $\phi(b) = (b_{\overline{\beta}}, b_{\gamma})$ and \textit{jeu-de-taquin} slides commute with raising and lowering operators (cf. Proposition~\ref{jdtcommutes}), it follows that $\varphi |_{\mathcal{C}} \colon \mathcal{C} \rightarrow B_{(\overline{\beta}, \gamma)}$ is a morphism of crystals.  A morphism $\mathcal{C} \rightarrow B_{(\overline{\beta}, \gamma)}$ is uniquely determined by its image at $b$, so we may conclude that $\varphi |_{\mathcal{C}}$ is an isomorphism.  
\end{proof}

We turn our attention now to the action of $\xi$, first on the highest weight elements of the $(\mathfrak{gl}_m \oplus \mathfrak{gl}_m)$-crystal $B_{\lambda}$, and then on all its tableaux.  

\begin{lem} \label{evachighweight}
Let $b$ be a highest weight element of $B_{\lambda}$, and let $\beta$ and $\gamma$ be partitions, each with $m$ parts, such that $\varphi(b) = (b_{\overline{\beta}}, b_{\gamma})$.  Then $\varphi(\xi(b)) = (b_{\overline{\gamma}}, b_{\beta})$.  
\end{lem}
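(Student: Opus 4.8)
The plan is to show first that $\xi(b)$ is again a highest weight element of the $(\mathfrak{sl_m} \oplus \mathfrak{sl_m})$-crystal $B_{\lambda}$, and then to identify the pair of partitions it determines by a weight computation, drawing on Propositions~\ref{ehighweights} and~\ref{evacinteracts}.

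The first step rests on the observation that conjugation by $\xi$ permutes the set of raising operators $R := \lbrace f_1, \ldots, f_{m-1}, e_{m+1}, \ldots, e_{2m-1} \rbrace$. Indeed, Proposition~\ref{evacinteracts}(ii), applied with $s = 2m$, gives $\xi \circ e_k = f_{2m-k} \circ \xi$ and $\xi \circ f_k = e_{2m-k} \circ \xi$ for $1 \leq k \leq 2m-1$ (with $\xi(0) := 0$), and, since $\xi$ is an involution, this reads $\xi e_k \xi = f_{2m-k}$ and $\xi f_k \xi = e_{2m-k}$. Hence $\xi$ carries each $f_i$ with $1 \leq i \leq m-1$ to $e_{2m-i}$ with $m+1 \leq 2m-i \leq 2m-1$, and each $e_j$ with $m+1 \leq j \leq 2m-1$ to $f_{2m-j}$ with $1 \leq 2m-j \leq m-1$; thus $\xi$ maps $R$ onto itself, interchanging the two $\mathfrak{sl_m}$ factors. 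Consequently, for every $g \in R$ one has $g \cdot \xi(b) = \xi\bigl((\xi g \xi) \cdot b\bigr)$, and $\xi g \xi \in R$ annihilates the highest weight element $b$, so $g \cdot \xi(b) = \xi(0) = 0$. Since $\xi$ preserves the shape $\lambda$, it follows that $\xi(b)$ is a highest weight element of $B_{\lambda}$ regarded as an $(\mathfrak{sl_m} \oplus \mathfrak{sl_m})$-crystal, and by Proposition~\ref{ehighweights} it is the unique highest weight element of its connected component.

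I would then invoke Proposition~\ref{ehighweights} for $\xi(b)$ to produce partitions $\beta'$ and $\gamma'$, each with $m$ parts, for which $\xi(b)$ has content $\overline{\beta'} \gamma'$ and $\varphi(\xi(b)) = (b_{\overline{\beta'}}, b_{\gamma'})$, and pin down $\beta' = \gamma$ and $\gamma' = \beta$ by comparing contents. By Proposition~\ref{evacinteracts}(i), $\operatorname{wt}(\xi(b)) = w_0 \cdot \operatorname{wt}(b)$, where $w_0 \in \mathfrak{S}_{2m}$ reverses the $2m$ coordinates. Since $\operatorname{wt}(b)$ is the image of the content $\overline{\beta} \gamma = (\beta_m, \ldots, \beta_1, \gamma_1, \ldots, \gamma_m)$, its reversal is $(\gamma_m, \ldots, \gamma_1, \beta_1, \ldots, \beta_m) = \overline{\gamma} \beta$; and the content of an element of $B_{\lambda}$ is recovered unambiguously from its weight, because two compositions of $|\lambda|$ into $2m$ parts congruent modulo $(\epsilon_1 + \cdots + \epsilon_{2m})\mathbb{Z}$ have equal coordinate sums and hence coincide. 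Thus $\overline{\beta'} \gamma' = \overline{\gamma} \beta$ as compositions; matching the first $m$ entries yields $\beta' = \gamma$ and matching the last $m$ yields $\gamma' = \beta$, whence $\varphi(\xi(b)) = (b_{\overline{\gamma}}, b_{\beta})$, as claimed.

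I expect the only point calling for care — as opposed to genuine difficulty — to be the bookkeeping with the index reflection $k \mapsto 2m-k$ inherited from $w_0$: one must verify that this reflection, together with the interchange of the two $\mathfrak{sl_m}$ factors, places the bar over $\gamma$ in the first coordinate and leaves $\beta$ unbarred in the second, and that the same conventions govern both the crystal-operator argument and the weight argument so that the two are mutually consistent.
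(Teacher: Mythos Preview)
Your proposal is correct and follows essentially the same approach as the paper's proof: use Proposition~\ref{evacinteracts} to see that $\xi(b)$ is again a highest weight element of the $(\mathfrak{sl_m}\oplus\mathfrak{sl_m})$-crystal with content $\overline{\gamma}\beta$, then invoke Proposition~\ref{ehighweights}. The paper compresses this into two sentences, whereas you have (correctly) spelled out the conjugation argument on the raising operators and the recovery of content from weight.
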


\begin{rem} \label{evacdescends}
If $\mu$ is a partition with $m$ parts, then $\textnormal{EYTab}(\lambda, \overline{\mu}\mu)$ is the set of highest weight elements of $B_{\lambda}$ with content $\overline{\mu}{\mu}$.  Thus, Lemma~\ref{evachighweight} implies that the Sch\"utzenberger involution indeed restricts to an action on $\textnormal{EYTab}(\lambda, \overline{\mu}\mu)$, as required for Theorem~\ref{mainevac} to be well-formulated.  
\end{rem}

\begin{proof}
In view of Proposition~\ref{evacinteracts}, we see that $\xi(b)$ is a highest weight element of $B_{\lambda}$ with content $\overline{\gamma}\beta$.  The desired result then follows directly from Proposition~\ref{ehighweights}.  
\end{proof}

\begin{thm} \label{evacgeneral}
Let $T \in B_{\lambda}$.  Then $\varphi(\xi(T)) = (\xi (\operatorname{Rect}(\varphi_1(T))), \xi (\varphi_0(T)))$.  
\end{thm}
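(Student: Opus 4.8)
The plan is to reduce this assertion about an arbitrary $T \in B_{\lambda}$ to the case $T = b$ of a highest weight element, where it amounts to Lemma~\ref{evachighweight}; the bridge is the crystal isomorphism $\varphi|_{\mathcal{C}} \colon \mathcal{C} \to B_{(\overline{\beta},\gamma)}$ of Theorem~\ref{evachwcrystal} together with the commutation relations of Proposition~\ref{evacinteracts}. First I would record how $\xi$ interacts with the $(\mathfrak{sl_m} \oplus \mathfrak{sl_m})$-crystal operators: Proposition~\ref{evacinteracts} with $s = 2m$ gives $\xi e_i = f_{2m-i}\,\xi$ and $\xi f_i = e_{2m-i}\,\xi$ for $1 \le i \le 2m-1$ (with $\xi(0) := 0$). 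For $i \le m-1$ the raising operator $f_i$ is conjugated by $\xi$ to $e_{2m-i}$ (with $2m-i \ge m+1$), again one of our raising operators; symmetrically $e_j$ for $j \ge m+1$ is conjugated to $f_{2m-j}$, and the lowering operators $\{e_1,\dots,e_{m-1},f_{m+1},\dots,f_{2m-1}\}$ are permuted among themselves the same way. A short check with these relations shows $\xi$ carries highest weight elements to highest weight elements and hence permutes the connected components of $B_{\lambda}$; so if $\mathcal{C}$ has highest weight element $b$ with $\varphi(b) = (b_{\overline{\beta}}, b_{\gamma})$ as in Proposition~\ref{ehighweights}, then $\xi(\mathcal{C})$ is a component with highest weight element $\xi(b)$, and $\varphi(\xi(b)) = (b_{\overline{\gamma}}, b_{\beta})$ by Lemma~\ref{evachighweight}, so by Theorem~\ref{evachwcrystal} the restriction $\varphi|_{\xi(\mathcal{C})}$ is an isomorphism onto $B_{(\overline{\gamma},\beta)}$.

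Next, take $T \in \mathcal{C}$. Since $\mathcal{C}$ is a highest weight crystal (Proposition~\ref{regconequiv}), write $T = g_k \cdots g_1 \cdot b$ with each $g_\ell$ a lowering operator of the $(\mathfrak{sl_m} \oplus \mathfrak{sl_m})$-structure, and let $g_\ell'$ be its $\xi$-conjugate (again a lowering operator). Iterating the relations above gives $\xi(T) = g_k' \cdots g_1' \cdot \xi(b)$, and applying the crystal morphisms $\varphi|_{\mathcal{C}}$ and $\varphi|_{\xi(\mathcal{C})}$ yields $\varphi(T) = g_k \cdots g_1 \cdot (b_{\overline{\beta}}, b_{\gamma})$ and $\varphi(\xi(T)) = g_k' \cdots g_1' \cdot (b_{\overline{\gamma}}, b_{\beta})$. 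It then remains to compare these two products inside the explicit product crystals $B_{\beta} \times B_{\gamma}$ and $B_{\gamma} \times B_{\beta}$ of Proposition~\ref{eprodcrystal}. The clean way to organize this is to introduce the auxiliary bijection $\Theta \colon B_{(\overline{\beta},\gamma)} \to B_{(\overline{\gamma},\beta)}$, $(A, C) \mapsto (\xi(C), \xi(A))$, where the inner occurrences of $\xi$ are the $\mathfrak{sl_m}$-Sch\"utzenberger involutions; then $\Theta(b_{\overline{\beta}}, b_{\gamma}) = (\xi(b_{\gamma}), \xi(b_{\overline{\beta}})) = (b_{\overline{\gamma}}, b_{\beta}) = \varphi(\xi(b))$, using that $\xi$ interchanges the highest and lowest weight tableaux of each $\mathfrak{sl_m}$-crystal (the lowest weight tableau of $B_{\gamma}$ being the unique one of content $\overline{\gamma}$, by Propositions~\ref{lowweightlemma} and \ref{highweightlemma}).

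The crux — and the step I expect to require the most care — is the verification that $\Theta$ intertwines each lowering operator $g$ with its conjugate $g'$, i.e. $\Theta \circ g = g' \circ \Theta$. This is bookkeeping that tracks the coordinate swap performed by $\Theta$: a lowering operator of index $\le m-1$ is some $e_i$ acting on the $B_{\beta}$-factor, and its conjugate $f_{2m-i}$ acts on $B_{(\overline{\gamma},\beta)}$ in the second coordinate (the $B_{\beta}$-factor there) as $f_{m-i}$, which is exactly the $\mathfrak{sl_m}$-evacuation conjugate of $e_i$; symmetrically, a lowering operator $f_{m+k}$ with $1 \le k \le m-1$ acts on the $B_{\gamma}$-factor as $f_k$, and its conjugate $e_{m-k}$ acts on $B_{(\overline{\gamma},\beta)}$ in the first coordinate as $e_{m-k}$, the $\mathfrak{sl_m}$-evacuation conjugate of $f_k$. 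Granting this, both $\Theta \circ \varphi|_{\mathcal{C}}$ and $\varphi|_{\xi(\mathcal{C})} \circ \xi|_{\mathcal{C}}$ are maps $\mathcal{C} \to B_{(\overline{\gamma},\beta)}$ that intertwine each lowering operator $g$ with the same operator $g'$ and agree at the highest weight element $b$; since $\mathcal{C}$ is generated from $b$ under the lowering operators, the two maps coincide. Unpacking $\varphi(T) = (\varphi_0(T), \operatorname{Rect}(\varphi_1(T)))$, this says $\varphi(\xi(T)) = \Theta(\varphi(T)) = (\xi(\operatorname{Rect}(\varphi_1(T))), \xi(\varphi_0(T)))$ for all $T \in \mathcal{C}$, and since $\mathcal{C}$ was an arbitrary component, for all $T \in B_{\lambda}$.

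One should double-check in the write-up that the shapes match up — the first coordinate of $\varphi(\xi(T))$ really is a straight (left-justified) tableau with entries in $\{1,\dots,m\}$ — but this is automatic from the definition of $\varphi$ and the fact that $\xi$ and rectification preserve straight shapes, so it should not present any genuine difficulty beyond the intertwining computation above.
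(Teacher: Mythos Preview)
Your proposal is correct and follows essentially the same route as the paper: both arguments reduce to the highest-weight case via Lemma~\ref{evachighweight} and then propagate the identity along the crystal using the commutation relations of Proposition~\ref{evacinteracts}, with the only difference being that you package the inductive step as an intertwining property of the auxiliary map $\Theta$, whereas the paper performs the identical index bookkeeping by direct case-by-case computation of $\varphi(\xi(g\cdot T))$ for each lowering operator $g$.
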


\begin{rem}
To interpret the statement of Theorem~\ref{evacgeneral}, we understand $\xi$ to denote the Sch\"utzenberger involution on $\mathfrak{gl}_m$-crystals as well as that on $\mathfrak{gl}_{2m}$-crystals.  
\end{rem}

\begin{proof}
Let $T \in B_{\lambda}$, and let $\mathcal{C}$ be the connected component of $B_{\lambda}$ containing $T$.  Let $b$ be the unique highest weight element of $\mathcal{C}$.  Our proof is by induction on the length of the shortest path in the crystal from $b$ to $T$.  We see from Lemma~\ref{evachighweight} that the desired equality holds for the base case $T=b$.  

For the inductive step, it suffices to show that if $1 \leq i \leq m-1$ or $m+1 \leq i \leq 2m-1$, then \[\varphi(\xi(T)) = (\xi (\operatorname{Rect}(\varphi_1(T))), \xi (\varphi_0(T)))\] implies \[\varphi(\xi(f_i T)) = (\xi (\operatorname{Rect}(\varphi_1(f_i T))), \xi (\varphi_0(f_i T))).\]  Note that
\begin{align*}
\varphi(\xi(f_i T)) & = \varphi(e_{2m-i} \xi(T)) = e_{2m-i} \varphi(\xi(T)) \\ & = e_{2m-i}(\xi(\operatorname{Rect}(\varphi_1(T))), \xi(\varphi_0(T))).  
\end{align*}
If $1 \leq i \leq m-1$, then
\begin{align*}
\varphi(\xi(f_i T)) & = (\xi(\operatorname{Rect}(\varphi_1(T))), e_{m-i} \xi(\varphi_0(T))) \\ & = (\xi(\operatorname{Rect}(\varphi_1(T))), \xi(f_i \varphi_0(T))) \\ & = (\xi(\operatorname{Rect}(\varphi_1(f_i T))), \xi(\varphi_0(f_i T))).
\end{align*}
If $m+1 \leq i \leq 2m-1$, then
\begin{align*}
\varphi(\xi(f_i T)) & = (e_{2m-i} \xi(\operatorname{Rect}(\varphi_1(T))), \xi(\varphi_0(T))) \\ & = (\xi(f_{i-m}(\operatorname{Rect}(\varphi_1(T)))), \xi(\varphi_0(T))) \\ & = (\xi(\operatorname{Rect}(\varphi_1(f_i T))), \xi(\varphi_0(f_i T))).  
\end{align*}
\end{proof}

\begin{cor} \label{evacconncompresbiject}
Let $\mathcal{C}$ be a connected component of the $(\mathfrak{gl}_m \oplus \mathfrak{gl}_m)$-crystal $B_{\lambda}$, and let $b$ be the unique highest weight element of $\mathcal{C}$.  If $\xi(b) \neq b$, then $\lbrace T \in \mathcal{C} : \xi(T)=T \rbrace$ is empty.  Otherwise, there exists a partition $\beta = (\beta_1, \beta_2, \ldots, \beta_m)$ such that $\varphi(b) = (b_{\overline{\beta}}, b_{\beta})$, and the isomorphism of crystals $\varphi |_{\mathcal{C}} \colon \mathcal{C} \xrightarrow{\sim} B_{(\overline{\beta},\beta)}$ restricts to a bijection of sets \[\lbrace T \in \mathcal{C} : \xi(T) = T \rbrace \xrightarrow{\sim} \lbrace (U, U') \in B_{(\overline{\beta},\beta)} : \xi(U)=U' \rbrace.\]  
\end{cor}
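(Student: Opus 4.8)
The plan is to deduce everything from the crystal isomorphism $\varphi|_{\mathcal{C}}\colon\mathcal{C}\xrightarrow{\sim}B_{(\overline{\beta},\gamma)}$ furnished by Theorem~\ref{evachwcrystal}, the formula $\varphi(\xi(T))=(\xi(\operatorname{Rect}(\varphi_1(T))),\xi(\varphi_0(T)))$ of Theorem~\ref{evacgeneral}, and the involutivity of $\xi$, once it has been verified that $\xi$ permutes the connected components of the $(\mathfrak{sl_m}\oplus\mathfrak{sl_m})$-crystal $B_{\lambda}$.

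First I would establish that last fact. By Proposition~\ref{evacinteracts} (with $s=2m$) one has $\xi f_i=e_{2m-i}\,\xi$ and $\xi e_i=f_{2m-i}\,\xi$, and the involution $i\mapsto 2m-i$ exchanges $\{1,\dots,m-1\}$ with $\{m+1,\dots,2m-1\}$; hence $\xi$ conjugates each raising operator $f_1,\dots,f_{m-1},e_{m+1},\dots,e_{2m-1}$ of the $(\mathfrak{sl_m}\oplus\mathfrak{sl_m})$-crystal to another such raising operator, and likewise conjugates the lowering operators $e_1,\dots,e_{m-1},f_{m+1},\dots,f_{2m-1}$ among themselves. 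It follows that $\xi$ is an automorphism of the underlying graph of the $(\mathfrak{sl_m}\oplus\mathfrak{sl_m})$-crystal $B_{\lambda}$, so it carries connected components to connected components and highest weight elements to highest weight elements; concretely, $\xi(\mathcal{C})$ is the component whose highest weight element is $\xi(b)$.

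The first assertion would then be immediate: if $\xi(b)\neq b$, then $\xi(\mathcal{C})$ and $\mathcal{C}$ are distinct, hence disjoint, components, and a tableau $T\in\mathcal{C}$ with $\xi(T)=T$ would lie in $\xi(\mathcal{C})\cap\mathcal{C}=\varnothing$. For the second assertion, I would assume $\xi(b)=b$. Comparing $\varphi(b)=(b_{\overline{\beta}},b_{\gamma})$ with $\varphi(\xi(b))=(b_{\overline{\gamma}},b_{\beta})$ (Lemma~\ref{evachighweight}) forces $b_{\overline{\beta}}=b_{\overline{\gamma}}$, and since $b_{\overline{\delta}}$ has shape $\delta$ this gives $\beta=\gamma$; thus $\varphi(b)=(b_{\overline{\beta}},b_{\beta})$ and $\varphi|_{\mathcal{C}}$ is an isomorphism $\mathcal{C}\xrightarrow{\sim}B_{(\overline{\beta},\beta)}=B_{\beta}\times B_{\beta}$. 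Since $\xi(\mathcal{C})=\mathcal{C}$, the map $\xi$ restricts to an involution of $\mathcal{C}$, so for $T\in\mathcal{C}$ the injectivity of $\varphi|_{\mathcal{C}}$ gives $\xi(T)=T$ if and only if $\varphi(\xi(T))=\varphi(T)$. Writing $\varphi(T)=(U,U')$ with $U=\varphi_0(T)$ and $U'=\operatorname{Rect}(\varphi_1(T))$, Theorem~\ref{evacgeneral} gives $\varphi(\xi(T))=(\xi(U'),\xi(U))$, so the equality $\varphi(\xi(T))=\varphi(T)$ amounts to $\xi(U')=U$ together with $\xi(U)=U'$; as $\xi$ is an involution on $\mathfrak{sl_m}$-crystals these two conditions are equivalent, the common one being $\xi(U)=U'$. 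Hence the bijection $\varphi|_{\mathcal{C}}$ would carry $\{T\in\mathcal{C}:\xi(T)=T\}$ onto $\{(U,U')\in B_{(\overline{\beta},\beta)}:\xi(U)=U'\}$, as desired.

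The one step that is not purely formal is the verification in the second paragraph that $\xi$ respects the $(\mathfrak{sl_m}\oplus\mathfrak{sl_m})$-component structure, not merely the $\mathfrak{sl_{2m}}$-structure; I expect this to be the main obstacle, though a routine one. It matters because $\xi$ could a priori interchange two distinct components of the same ``type'' $(\overline{\beta},\beta)$ --- a situation in which the shape bookkeeping of Theorem~\ref{evacgeneral} alone would not preclude fixed points --- and it is precisely the component-permuting property of $\xi$ that rules out fixed points in that case. Everything else follows formally from Theorems~\ref{evachwcrystal} and \ref{evacgeneral}, Lemma~\ref{evachighweight}, and $\xi^2=\mathrm{id}$.
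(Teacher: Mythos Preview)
Your proof is correct and is exactly the argument the paper intends: the corollary is stated without proof, as an immediate consequence of Theorem~\ref{evacgeneral} (together with Theorem~\ref{evachwcrystal} and Lemma~\ref{evachighweight}), and your write-up is a faithful unpacking of that implication. The only ingredient you make explicit that the paper leaves tacit is the observation, via Proposition~\ref{evacinteracts}, that $\xi$ permutes the connected components of the $(\mathfrak{sl_m}\oplus\mathfrak{sl_m})$-crystal; this is a welcome clarification rather than a departure.
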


We proceed to the proof of Theorem~\ref{mainevac} itself.  We compute the character $\chi$ of the $GL_{2m}(\mathbb{C})$-representation $V_{\lambda,2m}$ at the element \[w_0 \cdot \operatorname{diag}(x_1, x_2, \ldots, x_m, x_m, \ldots, x_2, x_1).\]  Note that \begin{align*} & \chi(w_0 \cdot \operatorname{diag}(x_1, x_2, \ldots, x_m, x_m, \ldots, x_2, x_1)) \\ & = (-1)^{v(\lambda)} \cdot \sum_{T \in B_{\lambda} : \xi(T) = T} x_1^{-2T_1} x_2^{-2T_2} \cdots x_m^{-2T_m} \\ & = (-1)^{v(\lambda)} \cdot \sum_{\theta \vdash |\lambda|/2} |\textnormal{EYTab}^{\xi}(\lambda, \overline{\theta} \theta)| \cdot s_{\theta}(x_1^{-2}, x_2^{-2}, \ldots, x_m^{-2}),\end{align*} where the first equality follows from Theorem~\ref{skanderabases}, and the second equality follows from Corollary~\ref{evacconncompresbiject}.  
  
However, \[w_0 \cdot \operatorname{diag}(x_1, x_2, \ldots, x_m, x_m, \ldots, x_2, x_1)\] is conjugate to \[\operatorname{diag}(x_1, x_2, \ldots, x_m, -x_m, \ldots, -x_2, -x_1).\]  Since $\chi \colon GL_{2m}(\mathbb{C}) \rightarrow \mathbb{C}$ is a class function, we see that \begin{align*} & \chi(w_0 \cdot \operatorname{diag}(x_1, x_2, \ldots, x_m, x_m, \ldots, x_2, x_1)) \\ & = \chi(\operatorname{diag}(x_1, x_2, \ldots, x_m, -x_m, \ldots, -x_2, -x_1)) \\ & = s_{\lambda}(x_1^{-1}, x_2^{-1}, \ldots, x_m^{-1}, -x_m^{-1}, \ldots, -x_2^{-1}, -x_1^{-1}) \\ & = (-1)^{v(\lambda)} \sum_D x_1^{-2D_1} x_2^{-2D_2} \cdots x_m^{-2D_m},\end{align*} where the sum ranges over all semistandard domino tableaux of shape $\lambda$ with entries in $\lbrace 1, 2, \ldots, m \rbrace$.  (Here the second equality follows from Theorem~\ref{skanderabases}, and the third equality follows from Remark 3.2 of Stembridge \cite{Stembridge}.)  

By Theorem~\ref{lascoux}, \[\sum_{D} x_1^{-2D_1} x_2^{-2D_2} \cdots x_m^{-2D_m} = \epsilon_2(\lambda) \cdot \phi_2(s_{\lambda})(x_1^{-2}, x_2^{-2}, \ldots, x_m^{-2}).\]  Expanding via Theorem~\ref{alphabetunique}, we find that \begin{align*} \phi_2(s_{\lambda})(x_1^{-2}, x_2^{-2}, \ldots, x_m^{-2}) & = \sum_{\theta \vdash |\lambda|/2} \langle \phi_2(s_{\lambda}), s_{\theta} \rangle s_{\theta}(x_1^{-2}, x_2^{-2}, \ldots, x_m^{-2}) \\ & = \sum_{\theta \vdash |\lambda|/2} \langle s_{\lambda}, p_2 \circ s_{\theta} \rangle s_{\theta}(x_1^{-2},x_2^{-2}, \ldots, x_m^{-2}).\end{align*}  Identifying the coefficients of $s_{\mu}(x_1^{-2}, x_2^{-2}, \ldots, x_m^{-2})$ in our two expressions for $\chi(w_0 \cdot \operatorname{diag}(x_1, x_2, \ldots, x_m, x_m, \ldots, x_2, x_1))$ in accordance with Theorem~\ref{alphabetunique}, we may conclude that \[ |\textnormal{EYTab}^{\xi}(\lambda, \overline{\mu} \mu)| =\epsilon_2(\lambda) \cdot \langle s_{\lambda}, p_2 \circ s_{\mu} \rangle.\] \qed

\subsection{Proof of Theorem~\ref{mainprom}}

Let $\lambda$ be a rectangular partition with $mn$ parts, and let $\mu$ be a partition of $|\lambda|/n$ with $m$ parts such that $\mu_i \leq \lambda_i$ for all positive parts $\mu_i$ of $\mu$.  Denote the $n$-fold concatenation \[(\mu_1, \mu_2, \ldots, \mu_m, \mu_1, \mu_2, \ldots, \mu_m, \ldots, \mu_1, \mu_2, \ldots, \mu_m)\] by $\mu^n$.  Write $\textnormal{Tab}(\lambda, \mu^n)$ for the set of semistandard tableaux of shape $\lambda$ and content $\mu^n$, and $\textnormal{PYTab}(\lambda, \mu^n)$ for the subset of $\textnormal{Tab}(\lambda, \mu^n)$ consisting of those tableaux with reading word Yamanouchi in $\lbrace km+1, km+2, \ldots, (k+1)m \rbrace$ for all $0 \leq k \leq n-1$.

\begin{rem}
Comments analogous to those in Remark~\ref{departure} apply here.
\end{rem}  

Let $B_{\lambda}$ be the set of semistandard tableaux of shape $\lambda$, endowed with a $\mathfrak{gl}_{mn}$-crystal structure in accordance with Proposition~\ref{tabstructure}.  The key to our proof is the assignment of a $(\mathfrak{gl}_m^{\oplus n})$-crystal structure to $B_{\lambda}$ that allows us to inspect the action of $j := \operatorname{pr}^m$ on its connected components.  This provides a combinatorial model for the decomposition into irreducible components of the restriction to $GL_m(\mathbb{C})^{\times n}$ of the irreducible $GL_{mn}(\mathbb{C})$-representation with highest weight $\lambda$, which underlies our character evaluation.  

Recall that we chose $\lbrace E_1 -E_2, E_2-E_3, \ldots, E_{mn-1} - E_{mn} \rbrace$ as the set of simple roots for $\mathfrak{gl}_{mn}$.  Here we choose \[\bigcup_{k=0}^{n-1} \lbrace E_{km+1} - E_{km+2}, E_{km+2} - E_{km+3}, \ldots, E_{(k+1)m-1} - E_{(k+1)m} \rbrace\] as the set of simple roots for $\mathfrak{gl}_m^{\oplus n}$.  

The following statements are analogous to statements 4.3 -- 4.7.  The proofs proceed exactly as in 4.3 -- 4.7 (if anything, they are even easier because in this case we do not flip the sign of any simple root).  Details may be found in Rush \cite{Rush} (be warned that there we work with $\mathfrak{sl}_m^{\oplus n}$-crystals rather than $\mathfrak{gl}_m^{\oplus n}$-crystals).   

\begin{prop} \label{promcrysrestrict}
The set $B_{\lambda}$ equipped with the map $\operatorname{wt}$, the set of raising operators $\bigcup_{k=0}^{n-1} \lbrace e_{km+1}, e_{km+2}, \ldots, e_{(k+1)m-1} \rbrace$, and the set of lowering operators $\bigcup_{k=0}^{n-1} \lbrace f_{km+1}, f_{km+2}, \ldots, f_{(k+1)m-1} \rbrace$ is a regular $(\mathfrak{gl}_m^{\oplus n})$-crystal. 
\end{prop}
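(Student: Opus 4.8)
The plan is to follow the template of the proof of Proposition~\ref{evaccrysrestrict}, noting that the present situation is strictly simpler: we merely restrict the given $\mathfrak{sl}_{mn}$-crystal operators to the sub-root-system of type $A_{m-1}^{\times n}$ obtained by deleting the Dynkin nodes $m, 2m, \ldots, (n-1)m$, rather than reversing the orientation of any block.

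First I would verify that conditions (i)--(iii) of Definition~\ref{whatiscrystal} hold for $\mathfrak{g} = \mathfrak{sl}_m^{\oplus n}$ with the stated simple roots and with the weight map obtained by composing $\operatorname{wt}$ with the natural projection onto the weight lattice of $\mathfrak{sl}_m^{\oplus n}$. Each such condition is local to a single index $i \in \bigcup_{k=0}^{n-1}\{km+1,\ldots,(k+1)m-1\}$: condition (ii) asserts $\operatorname{wt}(e_i\cdot b)=\operatorname{wt}(b)+\alpha_i$ and $\operatorname{wt}(f_i\cdot b)=\operatorname{wt}(b)-\alpha_i$; condition (iii) is the inverse compatibility of $e_i$ and $f_i$; and condition (i) pairs $\operatorname{wt}(b)$ against $\alpha_i^{\vee}$, which depends only on the $i$-th and $(i+1)$-st coordinates. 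Since the simple roots chosen for $\mathfrak{sl}_m^{\oplus n}$ form a subset of those chosen for $\mathfrak{sl}_{mn}$, all three are inherited verbatim from the $\mathfrak{sl}_{mn}$-crystal structure supplied by Proposition~\ref{tabstructure}; hence $B_{\lambda}$, with the indicated data, is an $(\mathfrak{sl}_m^{\oplus n})$-crystal.

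Next I would establish regularity, i.e., conditions (iv)--(vib) of Definition~\ref{regular} for all $i\neq j$ among the allowed indices. If $i$ and $j$ lie in distinct blocks, the operators $e_i, f_i$ alter only entries in $\{km+1,\ldots,(k+1)m\}$ while $e_j, f_j$ alter only entries in a disjoint range $\{k'm+1,\ldots,(k'+1)m\}$, so $\{e_i,f_i\}$ and $\{e_j,f_j\}$ commute and conditions (iv)--(vib) are satisfied trivially, exactly as for cross-set pairs in the proof of Proposition~\ref{evaccrysrestrict}. If instead $i$ and $j$ lie in a common block $\{km+1,\ldots,(k+1)m-1\}$, then the increments $\Delta_i\epsilon_j$, $\Delta_i\phi_j$, $\nabla_i\epsilon_j$, $\nabla_i\phi_j$ and the commutation identities in (va)--(vib) involve only $e_i,e_j,f_i,f_j$, which are the same operators in the $\mathfrak{sl}_{mn}$-crystal, so the conditions hold because $B_{\lambda}$ is regular as an $\mathfrak{sl}_{mn}$-crystal by Proposition~\ref{hwcrystal}. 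The only point requiring any care is the bookkeeping for the weight lattice of $\mathfrak{sl}_m^{\oplus n}$ and the confirmation that condition (i) survives the change of ambient Lie algebra, but this presents no genuine obstacle; unlike in Proposition~\ref{evaccrysrestrict}, no appeal to the symmetry interchanging $\Delta_i\epsilon_j$ with $\nabla_i\phi_j$ is needed here, since no $A_{m-1}$ block is reoriented.
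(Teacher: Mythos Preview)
Your proposal is correct and follows essentially the same approach as the paper's own proof: verify the crystal axioms by inheriting them from the ambient $\mathfrak{sl}_{mn}$-crystal structure, then establish regularity by observing that cross-block operator pairs commute (so conditions (iv)--(vib) hold trivially) while same-block pairs satisfy the conditions because $B_{\lambda}$ is already regular as an $\mathfrak{sl}_{mn}$-crystal. Your observation that this case is strictly simpler than Proposition~\ref{evaccrysrestrict}, requiring no appeal to the $\Delta_i\epsilon_j \leftrightarrow \nabla_i\phi_j$ symmetry, is exactly right and matches the paper's omission of that step here.
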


\begin{df}
For all tableaux $T \in B_{\lambda}$ and $0 \leq k \leq n-1$, let $\varphi_k(T)$ be the tableau obtained from $T$ by removing each box with an entry not in $\lbrace km+1, km+2, \ldots, (k+1)m-1 \rbrace$, and reducing modulo $m$ the entry in each remaining box, so that the entries of $\varphi_k(T)$ are among $1, 2, \ldots, m$.  Let $\varphi(T)$ be the ordered $n$-tuple of tableaux $(\varphi_0(T), \operatorname{Rect}(\varphi_1(T)), \ldots, \operatorname{Rect}(\varphi_{n-1}(T)))$.  
\end{df}

\begin{prop} \label{phighweights}
Let $\mathcal{C}$ be a connected component of the $(\mathfrak{gl}_m^{\oplus n})$-crystal $B_{\lambda}$.  Then $\mathcal{C}$ is a highest weight crystal.  Furthermore, if $b$ is the unique highest weight element of $\mathcal{C}$, then there exist partitions $\beta_0, \beta_1, \ldots, \beta_{n-1}$, each with $m$ parts, such that $b$ is of content $\beta_0 \beta_1 \cdots \beta_{n-1}$ and \[\varphi(b) = (b_{\beta_0}, b_{\beta_1}, \ldots, b_{\beta_{n-1}}).\]  
\end{prop}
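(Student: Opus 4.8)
The plan is to adapt the proof of Proposition~\ref{ehighweights} to $n$ blocks, the situation here being somewhat simpler since every block is a ``highest weight'' (Yamanouchi) block rather than one of them being an ``anti-Yamanouchi'' (lowest weight) block. For the first assertion: by Proposition~\ref{promcrysrestrict}, $B_{\lambda}$ equipped with the raising operators $\bigcup_{k=0}^{n-1}\{e_{km+1},\ldots,e_{(k+1)m-1}\}$ and the corresponding lowering operators is a regular $(\mathfrak{sl_m}^{\oplus n})$-crystal, so its connected component $\mathcal{C}$ is a regular connected crystal; as $\mathfrak{sl_m}^{\oplus n}$ is a direct sum of copies of $\mathfrak{sl_m}$ and hence simply laced, Proposition~\ref{regconequiv} shows $\mathcal{C}$ is a highest weight crystal. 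Let $b$ be its unique highest weight element.

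To identify $\varphi(b)$, I would translate the highest weight condition on $b$ into conditions on the subtableaux $\varphi_k(b)$. By definition $e_i$ vanishes at $b$ for every simple root $\alpha_i$ of $\mathfrak{sl_m}^{\oplus n}$, i.e.\ $e_{km+\ell}(b)=0$ for all $0\le k\le n-1$ and $1\le\ell\le m-1$; equivalently (Proposition~\ref{yamreinterp}) the reading word of $b$ is Yamanouchi in each block $\{km+1,\ldots,(k+1)m\}$. Since $e_{km+\ell}$ only ever interchanges entries equal to $km+\ell$ and $km+\ell+1$, which both lie in the $(k+1)^{\text{th}}$ subtableau, its action on $b$ is carried entirely within $\varphi_k(b)$ and agrees with that of $e_{\ell}$ on $\varphi_k(b)$ once the latter's entries are normalized to $\{1,\ldots,m\}$; thus $e_{\ell}$ vanishes at $\varphi_k(b)$ for $1\le\ell\le m-1$, and by Corollary~\ref{rectcommutes} the same holds for $\operatorname{Rect}(\varphi_k(b))$. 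Viewed as an element of the $\mathfrak{sl_m}$-crystal on tableaux of its shape, $\operatorname{Rect}(\varphi_k(b))$ is therefore a highest weight element, so it has Yamanouchi reading word, and Proposition~\ref{highweightlemma} then forces it to coincide with the unique tableau $b_{\beta_k}$ of shape and content $\beta_k$, where $\beta_k$ is its shape (a partition with at most, hence exactly, $m$ parts); for $k=0$ one observes that $\varphi_0(b)$ already has straight shape, so $\varphi_0(b)=b_{\beta_0}$ directly. This gives $\varphi(b)=(b_{\beta_0},\ldots,b_{\beta_{n-1}})$, and since rectification preserves content, the content of $b$ restricted to the $(k+1)^{\text{th}}$ block is $\beta_k$ (shifted up by $km$), so the content of $b$ is $\beta_0\beta_1\cdots\beta_{n-1}$.

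The one step that is not a direct citation of an earlier result is the claim in the second paragraph that $e_{km+\ell}$ acts on $b$ exactly as $e_{\ell}$ acts on $\varphi_k(b)$; I expect this mild bookkeeping --- verifying that the column statistics $h_{km+\ell,j}$ and $k_{km+\ell,j}$ of Proposition~\ref{tabstructure} computed in the full shape $\lambda$ agree with those of the subtableau $\varphi_k(b)$, since they count only the letters $km+\ell$ and $km+\ell+1$ and the columns are left unchanged on passing to the subtableau --- to be the only point requiring any care. Everything else is a transcription of the proof of Proposition~\ref{ehighweights}, using Propositions~\ref{regconequiv}, \ref{yamreinterp}, and \ref{highweightlemma} together with Corollary~\ref{rectcommutes}.
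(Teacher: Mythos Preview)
Your proposal is correct and follows essentially the same route as the paper's proof: both invoke Proposition~\ref{regconequiv} (using that $\mathfrak{sl_m}^{\oplus n}$ is simply laced) for the highest weight claim, then combine the Yamanouchi interpretation of the vanishing of the $e_i$'s with Corollary~\ref{rectcommutes} and Proposition~\ref{highweightlemma} to identify each $\operatorname{Rect}(\varphi_k(b))$ as $b_{\beta_k}$. Your additional paragraph spelling out why $e_{km+\ell}$ on $b$ agrees with $e_\ell$ on $\varphi_k(b)$ is a level of detail the paper omits, but the argument is otherwise the same.
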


\begin{prop} \label{pprodcrystal}
Let $\beta_0, \beta_1, \ldots, \beta_{n-1}$ be partitions, each with $m$ parts.  Equip the set $B_{(\beta_0, \beta_1, \ldots, \beta_{n-1})} := B_{\beta_0} \times B_{\beta_1} \times \cdots \times B_{\beta_{n-1}}$ with the map $\operatorname{wt} \times \operatorname{wt} \times \cdots \times \operatorname{wt}$.  For all $1 \leq i \leq m-1$ and $0 \leq k \leq n-1$, let $e_{km+i}$ and $f_{km+i}$ act as the $\mathfrak{sl}_m$-crystal operators $e_i$ and $f_i$, respectively, on $B_{\beta_k}$ and as the identity on $B_{\beta_j}$ for all $j \neq k$.  Then $B_{(\beta_0, \beta_1, \ldots, \beta_{n-1})}$, together with the set of raising operators $\bigcup_{k=0}^{n-1} \lbrace e_{km+1}, e_{km+2}, \ldots, e_{(k+1)m-1} \rbrace$ and the set of lowering operators $\bigcup_{k=0}^{n-1} \lbrace f_{km+1}, f_{km+2}, \ldots, f_{(k+1)m-1} \rbrace$, is a regular, connected $(\mathfrak{gl}_m^{\oplus n})$-crystal with unique highest weight element $(b_{\beta_0}, b_{\beta_1}, \ldots, b_{\beta_{n-1}})$.
\end{prop}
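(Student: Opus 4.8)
The plan is to mimic, with minor simplifications, the proof of Proposition~\ref{eprodcrystal} (the case $n = 2$), which in turn followed the argument of Proposition~\ref{evaccrysrestrict}. The present statement is if anything slightly easier, since all $n$ blocks of crystal operators here are ``untwisted'' --- none involves interchanging raising and lowering operators.

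First I would check that $B_{(\beta_0, \beta_1, \ldots, \beta_{n-1})}$ is an $(\mathfrak{sl_m}^{\oplus n})$-crystal. This amounts to verifying conditions (i)--(iii) of Definition~\ref{whatiscrystal} one block of simple roots at a time. For a fixed $k$, the operators $e_{km+1}, \ldots, e_{(k+1)m-1}$ and $f_{km+1}, \ldots, f_{(k+1)m-1}$ restrict on the factor $B_{\beta_k}$ to the $\mathfrak{sl_m}$-crystal operators of Proposition~\ref{tabstructure} and act as the identity on the remaining factors, while $\operatorname{wt}$ is the product weight map; hence the axioms follow at once from the fact that $B_{\beta_k}$ is an $\mathfrak{sl_m}$-crystal.

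Next I would establish regularity. Exactly as in the proof of Proposition~\ref{evaccrysrestrict}, operators indexed by simple roots in distinct blocks act on disjoint factors of the product and therefore commute, so $\Delta_i \epsilon_j(b) = \Delta_i \phi_j(b) = \nabla_i \epsilon_j(b) = \nabla_i \phi_j(b) = 0$ whenever $i$ and $j$ lie in different blocks; consequently conditions (iv)--(vib) of Definition~\ref{regular} are automatic for such pairs. It remains only to verify (iv)--(vib) when $km+1 \leq i, j \leq (k+1)m-1$ for a single $k$, and in that case the entire computation takes place inside $B_{\beta_k}$, which is regular as an $\mathfrak{sl_m}$-crystal by Proposition~\ref{hwcrystal}.

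Finally, for connectedness and the identification of the highest weight element: a tuple $(U_0, U_1, \ldots, U_{n-1})$ is annihilated by all the raising operators if and only if each $U_k$ is annihilated by all the $\mathfrak{sl_m}$-raising operators, and by Proposition~\ref{hwcrystal} this holds precisely when $U_k = b_{\beta_k}$, the unique tableau of shape $\beta_k$ and content $\beta_k$, for every $k$; hence $(b_{\beta_0}, b_{\beta_1}, \ldots, b_{\beta_{n-1}})$ is the unique highest weight element. Since every connected component of a regular crystal is again a regular connected crystal, hence a highest weight crystal by Proposition~\ref{regconequiv} (which applies because $\mathfrak{sl_m}^{\oplus n}$ is simply laced), a regular crystal possessing a single highest weight element must have a single connected component, and the proof is complete. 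I anticipate no genuine obstacle; the one step meriting care is the cross-block case of the regularity check, but the argument there is word-for-word the one already carried out for Propositions~\ref{evaccrysrestrict} and \ref{eprodcrystal}.
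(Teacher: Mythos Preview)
Your proposal is correct and follows essentially the same approach as the paper: verify the crystal axioms blockwise, reduce regularity to the within-block case (where it follows from regularity of each $B_{\beta_k}$), and identify the unique highest weight element factor by factor. The paper cites Proposition~\ref{highweightlemma} rather than Proposition~\ref{hwcrystal} for the last step and references Proposition~\ref{promcrysrestrict} rather than the $n=2$ analogues, but these are cosmetic differences; your explicit deduction of connectedness from uniqueness of the highest weight element via Proposition~\ref{regconequiv} is a welcome clarification that the paper leaves implicit.
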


\begin{thm} \label{promhwcrystal}
Let $\mathcal{C}$ be a connected component of the $(\mathfrak{gl}_m^{\oplus n})$-crystal $B_{\lambda}$.  Let $b$ be the unique highest weight element of $\mathcal{C}$, and let $\beta_0, \beta_1, \ldots, \beta_{n-1}$ be partitions, each with $m$ parts, for which $\varphi(b) = (b_{\beta_0}, b_{\beta_1}, \ldots, b_{\beta_{n-1}})$.  Then $\varphi$ restricts to an isomorphism of crystals $\mathcal{C} \xrightarrow{\sim} B_{(\beta_0, \beta_1, \ldots, \beta_{n-1})}$.
\end{thm}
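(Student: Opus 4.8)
The plan is to follow the proof of Theorem~\ref{evachwcrystal} essentially verbatim, replacing the two-block $(\mathfrak{sl}_m \oplus \mathfrak{sl}_m)$ structure by the $n$-block $(\mathfrak{sl}_m^{\oplus n})$ structure. First I would verify that $\varphi|_{\mathcal{C}} \colon \mathcal{C} \to B_{(\beta_0, \beta_1, \ldots, \beta_{n-1})}$ is a morphism of crystals. Each operator $e_{km+i}$ or $f_{km+i}$ (for $0 \le k \le n-1$ and $1 \le i \le m-1$) on the $(\mathfrak{sl}_m^{\oplus n})$-crystal $B_{\lambda}$ alters only boxes with entries in the block $\lbrace km+1, \ldots, (k+1)m \rbrace$, so it commutes with $\varphi_j$ for $j \neq k$ and, after reduction of the entries modulo $m$, descends to the operator $e_i$ or $f_i$ acting on the $k^{\text{th}}$ component; this is precisely the assertion that the $n$ subtableaux do not interact. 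For $k = 0$ the component $\varphi_0(T)$ is already of straight shape, so this identification is immediate; for $k \geq 1$ it follows from Proposition~\ref{jdtcommutes}, which guarantees that rectification commutes with the crystal operators, so that $\operatorname{Rect} \circ \varphi_k$ intertwines $e_{km+i}$ with $e_i$ (and $f_{km+i}$ with $f_i$) on $B_{\beta_k}$. Hence $\varphi|_{\mathcal{C}}$ commutes with all the operators, and is a morphism of crystals.

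Next I would establish that $\mathcal{C}$ and $B_{(\beta_0, \beta_1, \ldots, \beta_{n-1})}$ are abstractly isomorphic. Both are regular connected $(\mathfrak{sl}_m^{\oplus n})$-crystals, by Propositions~\ref{phighweights} and~\ref{pprodcrystal} respectively, so by Proposition~\ref{regconiso} it suffices to check that the highest weight elements $b$ and $(b_{\beta_0}, \ldots, b_{\beta_{n-1}})$ carry matching statistics $\phi_{km+i}$ for all $0 \le k \le n-1$ and $1 \le i \le m-1$. For $k = 0$ this holds by the definition of $\varphi_0$; for $k \geq 1$ it holds by the definition of $\varphi_k$ in view of Corollary~\ref{rectcommutes}, which records the invariance of $\phi_i$ under rectification. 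Thus $\mathcal{C} \cong B_{(\beta_0, \ldots, \beta_{n-1})}$.

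Finally, since a morphism of crystals out of a connected crystal is uniquely determined by its value at the highest weight element, and $\varphi|_{\mathcal{C}}$ sends $b$ to the highest weight element $(b_{\beta_0}, \ldots, b_{\beta_{n-1}})$ of $B_{(\beta_0, \ldots, \beta_{n-1})}$ (cf. Proposition~\ref{pprodcrystal}), the morphism $\varphi|_{\mathcal{C}}$ must coincide with the isomorphism furnished by Proposition~\ref{regconiso}; in particular $\varphi|_{\mathcal{C}}$ is itself an isomorphism, as desired. I do not anticipate a serious obstacle: the only place where the $n > 2$ case differs formally from the $n = 2$ case treated in Theorem~\ref{evachwcrystal} is the combinatorial bookkeeping confirming that the $n$ subtableaux are mutually independent under the chosen operators and that rectification of each skew component $\varphi_k(T)$, $k \geq 1$, leaves the others untouched, and this is routine given Proposition~\ref{jdtcommutes}.
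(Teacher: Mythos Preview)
Your proposal is correct and follows essentially the same approach as the paper's own proof: verify that $\varphi|_{\mathcal{C}}$ is a crystal morphism via Proposition~\ref{jdtcommutes}, match the $\phi_i$-statistics at the highest weight elements using Corollary~\ref{rectcommutes} to invoke Proposition~\ref{regconiso}, and conclude by uniqueness of morphisms out of a connected crystal. The paper's argument is terser but structurally identical.
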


We turn our attention now to the action of $j^d$ for $d$ dividing $n$, first on the highest weight elements of the $(\mathfrak{gl}_m^{\oplus n})$-crystal $B_{\lambda}$, and then on all its tableaux.  

\begin{lem} \label{promhighweight}
Let $b$ be a highest weight element of $B_{\lambda}$.  Let $\beta_0, \beta_1, \ldots, \beta_{n-1}$ be partitions, each with $m$ parts, such that $\varphi(b) = (b_{\beta_0}, b_{\beta_1}, \ldots, b_{\beta_{n-1}})$.  Then $\varphi(j^d(b)) = (b_{\beta_{n-d}}, b_{\beta_{n-d+1}}, \ldots, b_{\beta_{n-1}}, b_{\beta_0}, b_{\beta_1}, \ldots, b_{\beta_{n-d-1}})$.  
\end{lem}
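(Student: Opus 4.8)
The plan is to prove the $d=1$ case in a strengthened form---that $\operatorname{pr}^{m}$ carries highest weight elements of the $(\mathfrak{sl}_m^{\oplus n})$-crystal $B_{\lambda}$ to highest weight elements, cyclically rotating the associated $\varphi$-tuple by one step---and then to iterate $d$ times. Throughout we use that $j = \operatorname{pr}^{m}$ on $B_{\lambda}$ with $s = mn$, so $j^{d} = \operatorname{pr}^{md}$, and that $\operatorname{pr}^{mn} = \operatorname{id}$ on $B_{\lambda}$ by Theorem~\ref{uniqueprom}, since $\lambda$ is rectangular.

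First I would record how $\operatorname{pr}^{m}$ acts on the raising operators $e_j$ with $j \not\equiv 0 \pmod m$ that generate the block crystal structure. Iterating Proposition~\ref{prominteracts}(ii) $m$ times gives $\operatorname{pr}^{m} \circ e_{km+i} = e_{(k+1)m+i} \circ \operatorname{pr}^{m}$ for $0 \le k \le n-2$ and $1 \le i \le m-1$, since all intermediate indices stay within $\{1, \dots, mn-1\}$; thus $\operatorname{pr}^{m}$ conjugates the block-$k$ operators into the block-$(k{+}1)$ operators when $k < n-1$. The remaining case---block $n-1$ mapping to block $0$---is the main obstacle, because naive iteration of Proposition~\ref{prominteracts}(ii) overshoots the allowed index range. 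To handle it I would use the identity $e_k = \operatorname{pr}^{k-1} \circ e_1 \circ \operatorname{pr}^{-(k-1)}$, valid for $1 \le k \le mn-1$ (again from Proposition~\ref{prominteracts}(ii)), together with $\operatorname{pr}^{mn} = \operatorname{id}$ to compute, for $1 \le i \le m-1$,
\[
\operatorname{pr}^{m} \circ e_{(n-1)m+i}
= \operatorname{pr}^{\,mn+i-1} \circ e_1 \circ \operatorname{pr}^{-(mn-m+i-1)}
= \operatorname{pr}^{\,i-1} \circ e_1 \circ \operatorname{pr}^{\,m-i+1}
= e_i \circ \operatorname{pr}^{m}.
\]
Combining the two cases, every block raising operator $e_j$ satisfies $e_j \circ \operatorname{pr}^{m} = \operatorname{pr}^{m} \circ e_{j'}$ for some block raising operator $e_{j'}$; hence if $b$ is a highest weight element (so $e_{j'} b = 0$ for all such $j'$), then $e_j(\operatorname{pr}^{m}(b)) = 0$ for every $j$, i.e.\ $\operatorname{pr}^{m}(b)$ is again a highest weight element.

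It then remains to identify $\varphi(\operatorname{pr}^{m}(b))$. By Proposition~\ref{prominteracts}(i), $\operatorname{wt}(\operatorname{pr}^{m}(b)) = c_{mn}^{m} \cdot \operatorname{wt}(b)$; since the long cycle $c_{mn}$ shifts the coordinates of a length-$mn$ composition cyclically by one position, applying it $m$ times to the content $\beta_0 \beta_1 \cdots \beta_{n-1}$ of $b$ produces $\beta_{n-1} \beta_0 \cdots \beta_{n-2}$. As $\operatorname{pr}^{m}(b)$ is a highest weight element, Proposition~\ref{phighweights} supplies partitions $\gamma_0, \dots, \gamma_{n-1}$, each with $m$ parts, such that $\operatorname{pr}^{m}(b)$ has content $\gamma_0 \gamma_1 \cdots \gamma_{n-1}$ and $\varphi(\operatorname{pr}^{m}(b)) = (b_{\gamma_0}, \dots, b_{\gamma_{n-1}})$; matching the two descriptions of the content block by block forces $\gamma_0 = \beta_{n-1}$ and $\gamma_k = \beta_{k-1}$ for $1 \le k \le n-1$. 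This settles the case $d=1$.

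Finally, since $\operatorname{pr}^{m}$ preserves highest weight elements, I would apply the $d=1$ conclusion $d$ times in succession to $b, \operatorname{pr}^{m}(b), \dots, \operatorname{pr}^{m(d-1)}(b)$, rotating the $\varphi$-tuple by one each time, to obtain $\varphi(j^{d}(b)) = \varphi(\operatorname{pr}^{md}(b)) = (b_{\beta_{n-d}}, b_{\beta_{n-d+1}}, \dots, b_{\beta_{n-1}}, b_{\beta_0}, \dots, b_{\beta_{n-d-1}})$, as claimed. The delicate point, as noted, is the wraparound identity for the block-$(n-1)$ operators, which is exactly where rectangularity of $\lambda$---via $\operatorname{pr}^{mn} = \operatorname{id}$---enters the argument.
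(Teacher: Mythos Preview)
Your argument is correct and follows essentially the same line as the paper's proof: both establish that $j^{d}(b)$ is again a highest weight element by conjugating the block raising operators through powers of $\operatorname{pr}$, with rectangularity (Theorem~\ref{uniqueprom}) used to handle the wraparound block, and then read off the $\varphi$-tuple from the content via Proposition~\ref{phighweights}. The only tactical difference is that the paper treats general $d$ in one pass by writing $j^{d}=j^{-(n-d)}$ and using the inverse relation $\operatorname{pr}^{-1}\circ e_{i+1}=e_i\circ\operatorname{pr}^{-1}$ for the first $d$ blocks, whereas you reduce to $d=1$ via the operator identity $\operatorname{pr}^{m}\circ e_{(n-1)m+i}=e_i\circ\operatorname{pr}^{m}$ and then iterate; these are equivalent uses of $\operatorname{pr}^{mn}=\operatorname{id}$.
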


\begin{rem} \label{promdescends}
If $\mu$ is a partition with $m$ parts, then $\textnormal{PYTab}(\lambda, \mu^n)$ is the set of highest weight elements of $B_{\lambda}$ with content $\mu^n$.  Thus, Lemma~\ref{promhighweight} implies that $j = \operatorname{pr}^m$ indeed restricts to an action on $\textnormal{PYTab}(\lambda, \mu^n)$, as required for Theorem~\ref{mainprom} to be well-formulated.  
\end{rem}

\begin{proof}
Recall from Theorem~\ref{uniqueprom} that $j^d(b) = j^{-(n-d)}(b)$.  In view of Proposition~\ref{prominteracts}, we see that $e_{km+1}, e_{km+2}, \ldots, e_{(k+1)m-1}$ all vanish at $j^d(b)$ for $d \leq k \leq n-1$.  Rewriting property (ii) in Proposition~\ref{prominteracts} as $\operatorname{pr}^{-1} (e_{i+1}(T)) = e_i (\operatorname{pr}^{-1}(T))$, we see that $e_{km+1}, e_{km+2}, \ldots, e_{(k+1)m-1}$ all vanish at $j^{-(n-d)}(b)$ for $0 \leq k \leq d-1$.  Thus, $j^d(b)$ is a highest weight element of $B_{\lambda}$, and, from property (i) in Proposition~\ref{prominteracts}, we see that $j^d(b)$ is of content \[b_{\beta_{n-d}} b_{\beta_{n-d+1}} \cdots b_{\beta_{n-1}} b_{\beta_0} b_{\beta_1} \cdots b_{\beta_{n-d-1}}.\]  The desired result then follows directly from Proposition~\ref{phighweights}.  
\end{proof}

\begin{thm} \label{promgeneral}
Let $T \in B_{\lambda}$.  Then $\varphi(j^d(T))$  \[= (\operatorname{Rect}(\varphi_{n-d}(T)), \ldots, \operatorname{Rect}(\varphi_{n-1}(T)), \varphi_0(T), \ldots, \operatorname{Rect}(\varphi_{n-d-1}(T))).\]  
\end{thm}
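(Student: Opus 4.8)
The plan is to mirror the proof of Theorem~\ref{evacgeneral}. Fix $T \in B_{\lambda}$, let $\mathcal{C}$ be the connected component of the $(\mathfrak{sl_m}^{\oplus n})$-crystal $B_{\lambda}$ containing $T$, and let $b$ be the unique highest weight element of $\mathcal{C}$, which exists by Proposition~\ref{phighweights}. I would induct on the distance from $b$ to $T$ in the crystal, exactly as in the proof of Theorem~\ref{evacgeneral}; the base case $T = b$ is precisely Lemma~\ref{promhighweight}. For the inductive step it then suffices to show that if the asserted identity holds for $T$, it holds for $f_{km+r}T$ for every $0 \le k \le n-1$ and every $1 \le r \le m-1$.

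Before the inductive step I would isolate the single substantive ingredient: the intertwining relation $j^{d} \circ f_{km+r} = f_{k'm+r} \circ j^{d}$ on $B_{\lambda}$, where $k' := (k+d) \bmod n$ (and likewise for the raising operators), which expresses that $j = \operatorname{pr}^{m}$ cyclically shifts the $n$ blocks of the alphabet. To prove it I would iterate part (ii) of Proposition~\ref{prominteracts}. If $k + d \le n-1$, applying $\operatorname{pr}(f_i \cdot T) = f_{i+1} \cdot \operatorname{pr}(T)$ a total of $md$ times is legitimate because the subscript never exceeds $(k+d)m + r \le mn-1$, and it gives $j^{d}(f_{km+r}T) = \operatorname{pr}^{md}(f_{km+r}T) = f_{(k+d)m+r}(j^{d}T)$. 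If instead $k+d \ge n$, I would first invoke $j^{n} = \operatorname{id}$ (Theorem~\ref{uniqueprom}, available because $\lambda$ is rectangular) to rewrite $j^{d} = j^{-(n-d)}$, and then iterate $\operatorname{pr}^{-1}(f_i \cdot T) = f_{i-1} \cdot \operatorname{pr}^{-1}(T)$ a total of $m(n-d)$ times; here the subscript never drops below $(k+d-n)m + r \ge 1$, so this too is legitimate, and the result is $f_{(k+d-n)m+r}(j^{d}T)$. In both cases the new block index is $(k+d) \bmod n$.

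The inductive step is then a direct computation, parallel to the one in the proof of Theorem~\ref{evacgeneral}. With $k' = (k+d) \bmod n$, the intertwining relation gives $\varphi(j^{d}(f_{km+r}T)) = \varphi(f_{k'm+r}(j^{d}T))$. Since operators of distinct blocks do not interact (cf. Proposition~\ref{pprodcrystal}), $f_{k'm+r}$ alters only the $(k'+1)$-st subtableau of its argument, acting there as the $\mathfrak{sl_m}$-lowering operator $f_r$ after reduction modulo $m$; using that $\operatorname{Rect}$ commutes with the crystal operators (Proposition~\ref{jdtcommutes}), the slot-$k'$ entry of $\varphi(f_{k'm+r}(j^{d}T))$ equals $f_r \operatorname{Rect}(\varphi_{k'}(j^{d}T))$, while all other slots agree with those of $\varphi(j^{d}T)$. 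Feeding in the inductive hypothesis for $\varphi(j^{d}T)$ and using $(n-d+k') \bmod n = k$, the slot-$k'$ entry becomes $f_r \operatorname{Rect}(\varphi_k(T)) = \operatorname{Rect}(\varphi_k(f_{km+r}T))$; and for $\ell \ne k'$ the slot-$\ell$ entry is $\operatorname{Rect}(\varphi_{(n-d+\ell) \bmod n}(T)) = \operatorname{Rect}(\varphi_{(n-d+\ell) \bmod n}(f_{km+r}T))$, the last step because $(n-d+\ell) \bmod n \ne k$ and $f_{km+r}$ leaves every block other than block $k$ untouched. This is exactly the asserted identity for $f_{km+r}T$, which closes the induction.

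I expect the main obstacle to be the index bookkeeping behind the intertwining relation rather than anything conceptual: Proposition~\ref{prominteracts}(ii) advances a crystal subscript by $1$ per application of $\operatorname{pr}$ but is only meaningful while that subscript lies in $\lbrace 1, 2, \ldots, mn-1 \rbrace$, so one must genuinely use the rectangularity of $\lambda$ --- via $j^{n} = \operatorname{id}$ --- to reroute through $\operatorname{pr}^{-1}$ whenever the forward shift would run past the top of the alphabet. Everything else (the behaviour of $\varphi_k$ on a single block, the commutation of $\operatorname{Rect}$ with $e_i$ and $f_i$) is immediate from material already in place in Sections~2 and~3.
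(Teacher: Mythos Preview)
Your proposal is correct and follows essentially the same route as the paper: induction on crystal distance from the highest weight element, with Lemma~\ref{promhighweight} as the base case, and the inductive step handled via the intertwining $j^{d} f_{km+r} = f_{k'm+r} j^{d}$ (split into the two index ranges exactly as you describe) together with the fact that $\varphi$ is a crystal morphism. The only difference is presentational: the paper folds the intertwining relation directly into the inductive computation rather than isolating it as a separate claim, but the content and case split are identical.
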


\begin{proof}
Let $T \in B_{\lambda}$, and let $\mathcal{C}$ be the connected component of $B_{\lambda}$ containing $T$.  Let $b$ be the unique highest weight element of $\mathcal{C}$.  Our proof is by induction on the length of the shortest path in the crystal from $b$ to $T$.  We see from Lemma~\ref{promhighweight} that the desired equality holds for the base case $T=b$.  

For the inductive step, it suffices to show that if $0 \leq k \leq n-1$ and $km+1 \leq i \leq (k+1)m-1$, then \[\varphi(j^d(T)) = (\operatorname{Rect}(\varphi_{n-d}(T)), \ldots, \varphi_0(T), \ldots, \operatorname{Rect}(\varphi_{n-d-1}(T)))\] implies \[ \varphi(j^d(f_i T)) = (\operatorname{Rect}(\varphi_{n-d}(f_i T)), \ldots, \varphi_0(f_i T), \ldots, \operatorname{Rect}(\varphi_{n-d-1}(f_i T))).\]  Note that if $0 \leq k \leq n-d-1$, then \begin{align*}  \varphi(j^d(f_i T)) & = \varphi (f_{i+dm} j^d(T)) = f_{i+dm} \varphi(j^d(T)) \\ & = f_{i+dm} (\operatorname{Rect}(\varphi_{n-d}(T)), \ldots, \varphi_0(T), \ldots, \operatorname{Rect}(\varphi_{n-d-1}(T))) \\ & = (\operatorname{Rect}(\varphi_{n-d}(f_i T)), \ldots, \varphi_0(f_i T), \ldots, \operatorname{Rect}(\varphi_{n-d-1}(f_i T))).\end{align*}
If $n-d \leq k \leq n-1$, then \begin{align*} \varphi(j^d(f_i T)) & = \varphi (f_{i-(n-d)m} j^d(T)) = f_{i-(n-d)m} \varphi(j^d(T)) \\ & = f_{i-(n-d)m} (\operatorname{Rect}(\varphi_{n-d}(T)), \ldots, \varphi_0(T), \ldots, \operatorname{Rect}(\varphi_{n-d-1}(T))) \\ & = (\operatorname{Rect}(\varphi_{n-d}(f_i T)), \ldots, \varphi_0(f_i T), \ldots, \operatorname{Rect}(\varphi_{n-d-1}(f_i T))). \end{align*}
\end{proof}

\begin{cor} \label{promconncompresbiject}
Let $\mathcal{C}$ be a connected component of the $(\mathfrak{gl}_m^{\oplus n})$-crystal $B_{\lambda}$, and let $b$ be the unique highest weight element of $\mathcal{C}$.  If $j^d(b) \neq b$, then $\lbrace T \in \mathcal{C} : j^d(T)=T \rbrace$ is empty.  Otherwise, there exist $d$ partitions $\beta_0, \beta_1, \ldots, \beta_d$, each with $m$ parts, such that $\varphi(b) = \left((b_{\beta_0}, b_{\beta_1}, \ldots, b_{\beta_{d-1}})^{n/d}\right)$, and the isomorphism of crystals $\mathcal{C} \xrightarrow{\sim} B_{\left((\beta_0, \beta_1, \ldots, \beta_d)^{n/d}\right)}$ restricts to a bijection of sets \begin{align*} & \lbrace T \in \mathcal{C} : j^d(T) = T \rbrace \xrightarrow{\sim} \\ & \lbrace (U_0, U_1, \ldots, U_{n-1}) \in B_{\left((\beta_0, \beta_1, \ldots, \beta_d)^{n/d}\right)} \\ & \text{such that } U_j = U_{j'} \text{ for all } j \cong j' \pmod d \rbrace. \end{align*}  
\end{cor}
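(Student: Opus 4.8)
The plan is to deduce the statement from three facts already in hand: Theorem~\ref{promhwcrystal}, that $\varphi$ restricts to a crystal isomorphism $\mathcal{C} \xrightarrow{\sim} B_{(\beta_0,\ldots,\beta_{n-1})}$ with $\varphi(b) = (b_{\beta_0},\ldots,b_{\beta_{n-1}})$; Lemma~\ref{promhighweight}, describing the action of $j^d$ on highest weight elements; and Theorem~\ref{promgeneral}, that $\varphi$ conjugates $j^d$ on $B_{\lambda}$ to the operator that cyclically shifts an $n$-tuple of tableaux by $d$ positions. Granting these, the fixed-point count on $\mathcal{C}$ reduces to bookkeeping about tuples invariant under a cyclic shift.

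First I would show that $j^d$ permutes the connected components of the $(\mathfrak{sl_m}^{\oplus n})$-crystal $B_{\lambda}$, carrying highest weight elements to highest weight elements. As in the proof of Theorem~\ref{promgeneral}, Proposition~\ref{prominteracts} --- applied directly when $k+d<n$, and via the identity $j^d = j^{-(n-d)}$ from Theorem~\ref{uniqueprom} when $k+d \ge n$ --- yields $j^d(e_{km+i}\cdot T) = e_{k'm+i}\cdot j^d(T)$, and likewise for $f_{km+i}$, where $0 \le k' \le n-1$ and $k' \equiv k+d \pmod{n}$. Hence $j^d$ sends edges of the crystal graph to edges, so it permutes connected components, and the computation in the proof of Lemma~\ref{promhighweight} shows $j^d(b)$ is a highest weight element. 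It follows that if some $T \in \mathcal{C}$ satisfies $j^d(T) = T$, then $T$ lies in $j^d(\mathcal{C})$, forcing $j^d(\mathcal{C}) = \mathcal{C}$ and hence $j^d(b) = b$; contrapositively, if $j^d(b) \ne b$ then $\{T \in \mathcal{C} : j^d(T) = T\}$ is empty.

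Next, assuming $j^d(b) = b$, I would extract the partitions and build the bijection. By Theorem~\ref{promhwcrystal} we have $\varphi(b) = (b_{\beta_0},\ldots,b_{\beta_{n-1}})$, and by Lemma~\ref{promhighweight} the equality $j^d(b) = b$ forces $b_{\beta_{k}} = b_{\beta_{k-d}}$, hence $\beta_k = \beta_{k-d}$, for all $k$ modulo $n$; since $d \mid n$, the partition $\beta_k$ depends only on $k \bmod d$, so $\varphi(b) = ((b_{\beta_0},\ldots,b_{\beta_{d-1}})^{(n/d)})$ as claimed. Because $j^d(\mathcal{C}) = \mathcal{C}$, the operator $j^d$ restricts to a bijection of $\mathcal{C}$, while $\varphi|_{\mathcal{C}}$ is a bijection onto $B_{((\beta_0,\ldots,\beta_{d-1})^{(n/d)})}$; by Theorem~\ref{promgeneral}, writing $\varphi(T) = (U_0,\ldots,U_{n-1})$ one has $\varphi(j^d(T)) = (U_{n-d},\ldots,U_{n-1},U_0,\ldots,U_{n-d-1})$, so $j^d(T) = T$ if and only if $U_k = U_{k-d}$ for all $k$ modulo $n$. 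Iterating this relation and using $\gcd(n,d) = d$, the condition is equivalent to $U_j = U_{j'}$ whenever $j \equiv j' \pmod{d}$; therefore $\varphi|_{\mathcal{C}}$ carries $\{T \in \mathcal{C} : j^d(T) = T\}$ bijectively onto the displayed set of tuples, which completes the argument.

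The step I expect to be the main obstacle is the first one: verifying precisely that $j^d$ is a symmetry of the $(\mathfrak{sl_m}^{\oplus n})$-crystal structure. The subtlety is that $j^d$ is not an honest $(\mathfrak{sl_m}^{\oplus n})$-crystal morphism but one twisted by the order-$(n/d)$ diagram automorphism that cyclically permutes the $n$ copies of $\mathfrak{sl_m}$, and one must handle the index wraparound at the boundary positions $km$ --- which is exactly why only $e_{km+1},\ldots,e_{(k+1)m-1}$, and not $e_{km}$, belong to the $(\mathfrak{sl_m}^{\oplus n})$-structure. Splitting into the cases $k+d<n$ and $k+d\ge n$, as above, is what keeps every crystal-operator index inside the legal range $1,\ldots,mn-1$; once that is settled, everything downstream is routine tracking of a cyclic shift.
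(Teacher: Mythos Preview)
Your proposal is correct and matches the paper's intended argument: the corollary is stated without proof in the paper, as an immediate consequence of Theorem~\ref{promhwcrystal}, Lemma~\ref{promhighweight}, and Theorem~\ref{promgeneral}, which are precisely the three ingredients you invoke. Your careful handling of the first step---checking that $j^d$ intertwines the $(\mathfrak{sl_m}^{\oplus n})$-crystal operators up to the cyclic diagram automorphism, with the case split $k+d<n$ versus $k+d\ge n$ to keep indices in range---is exactly the verification the paper leaves implicit in passing from Theorem~\ref{promgeneral} to this corollary.
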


We proceed to the proof of Theorem~\ref{mainprom} itself.  For all $T \in B_{\lambda}$, let \[(T_{1,1}, T_{1,2}, \ldots, T_{1,m}, T_{2,1}, T_{2,2}, \ldots, T_{2,m}, \ldots, T_{n,1}, T_{n,2}, \ldots, T_{n,m})\] be the content of $T$, and write $T_i$ for the composition $(T_{i,1}, T_{i,2}, \ldots, T_{i,m})$ for all $1 \leq i \leq n$.  Let $a$ be the number of positive parts of $\lambda$.  Let \[\lbrace y_{1,1}, y_{1,2}, \ldots, y_{1,m} \rbrace, \lbrace y_{2,1}, y_{2,2}, \ldots, y_{2,m} \rbrace, \ldots, \lbrace y_{d,1}, y_{d,2}, \ldots, y_{d,m} \rbrace\] be a collection of $d$ variable sets denoted by $y_1, y_2, \ldots, y_d$, respectively.  By abuse of notation, let the corresponding diagonal matrices \[\operatorname{diag}(y_{1,1}, y_{1,2}, \ldots, y_{1,m}), \operatorname{diag}(y_{2,1},y_{2,2}, \ldots, y_{2,m}), \ldots, \operatorname{diag}(y_{d,1},y_{d,2}, \ldots, y_{d,m})\] be denoted by $y_1, y_2, \ldots, y_d$, respectively, as well.  We compute the character $\chi$ of the $GL_{mn}(\mathbb{C})$-representation $V_{\lambda,mn}$ at the element \[c_{mn}^{md} \cdot \operatorname{diag}(y_1, y_2, \ldots, y_d, y_1, y_2, \ldots, y_d, \ldots, y_1, y_2, \ldots, y_d).\]

Let $\eta$ be a composition of $|\lambda|$ with $mn$ parts, and let $U$ be a semistandard tableau of shape $\lambda$ and content $\eta$.  From Theorem~\ref{skanderabases}, we see that \[c_{mn}^{md} \cdot I_{\eta}(U) = (-1)^{(\eta_{m(n-d)+1} + \eta_{m(n-d)+2} + \cdots + \eta_{mn})(a-1)} \cdot I_{c_{mn}^{md} \cdot \eta}(\operatorname{pr}^{md}(U)).\]  If $\eta = c_{mn}^{md} \cdot \eta$, then \[\eta_{m(n-d)+1} + \eta_{m(n-d)+2} + \cdots + \eta_{mn} = \frac{d}{n} \cdot |\eta| = \frac{d}{n} \cdot |\lambda|,\] and \[c_{mn}^{md} \cdot I_{\eta}(U) = (-1)^{\frac{d}{n} \cdot |\lambda| (a-1)} \cdot I_{\eta}(\operatorname{pr}^{md}(U)) = \zeta^{d v(\lambda)} \cdot I_{\eta}(\operatorname{pr}^{md}(U)),\] where $\zeta$ is a primitive $n^{\text{th}}$ root of unity.  

Note that \begin{align*} & \chi \left (c_{mn}^{md} \cdot \operatorname{diag}(y_1, y_2, \ldots, y_d, y_1, y_2, \ldots, y_d, \ldots, y_1, y_2, \ldots, y_d) \right) \\ & = \zeta^{d v(\lambda)} \cdot \sum_{T \in B_{\lambda} : j^d(T) = T} y_1^{-\frac{n}{d} \cdot T_1} y_2^{-\frac{n}{d} \cdot T_2} \cdots y_d^{-\frac{n}{d} \cdot T_d}\end{align*} \[= \zeta^{d v(\lambda)} \cdot \sum |\textnormal{PYTab}^{j^d}(\lambda, (\theta_1 \theta_2 \ldots \theta_d)^\frac{n}{d})| \cdot s_{\theta_1}\left(y_1^{-\frac{n}{d}}\right) s_{\theta_2}\left(y_2^{-\frac{n}{d}}\right) \cdots s_{\theta_d}\left(y_d^{-\frac{n}{d}}\right),\] where the sum ranges over all $d$-tuples of partitions $(\theta_1, \theta_2, \ldots, \theta_d)$ such that $|\theta_1| = |\theta_2| = \cdots = |\theta_d| = |\lambda|/n$.  (The first equality follows from Theorem~\ref{skanderabases}, and the second equality follows from Corollary~\ref{promconncompresbiject}.)
  
However, \[c_{mn}^{md} \cdot \operatorname{diag}(y_1, y_2, \ldots, y_d, y_1, y_2, \ldots, y_d, \ldots, y_1, y_2, \ldots, y_d)\] is conjugate to \[\operatorname{diag}(y_1, y_2, \ldots, y_d, \zeta^d y_1, \zeta^d y_2, \ldots, \zeta^d y_d, \ldots, \zeta^{n-d} y_1, \zeta^{n-d} y_2, \ldots, \zeta^{n-d} y_d).\]  Since $\chi \colon GL_{mn}(\mathbb{C}) \rightarrow \mathbb{C}$ is a class function, we see that \begin{align*} & \chi(c_{mn}^{md} \cdot \operatorname{diag}(y_1, y_2, \ldots, y_d, y_1, y_2, \ldots, y_d, \ldots, y_1, y_2, \ldots, y_d)) \\ & = \chi(\operatorname{diag}(y_1, y_2, \ldots, y_d, \zeta^d y_1, \zeta^d y_2, \ldots, \zeta^d y_d, \ldots, \zeta^{n-d} y_1, \zeta^{n-d} y_2, \ldots, \zeta^{n-d} y_d)) \\ & = s_{\lambda}(y_1^{-1},  \ldots, y_d^{-1}, \zeta^d y_1^{-1}, \ldots, \zeta^d y_d^{-1}, \ldots, \zeta^{n-d} y_1^{-1}, \ldots, \zeta^{n-d} y_d^{-1}) \\ & = \zeta^{d v(\lambda)} \sum_R y_1^{-\frac{n}{d} \cdot R_1} y_2^{-\frac{n}{d} \cdot R_2} \cdots y_d^{-\frac{n}{d} \cdot R_d},\end{align*} where the sum ranges over all semistandard $\frac{n}{d}$-ribbon tableaux of shape $\lambda$ with entries in $\lbrace 1, 2, \ldots, md \rbrace$.  (For all such $\frac{n}{d}$-ribbon tableaux $R$, the content of $R$ is denoted by \[(R_{1,1}, R_{1,2}, \ldots, R_{1,m}, R_{2,1}, R_{2,2}, \ldots, R_{2,m}, \ldots, R_{d,1}, R_{d,2}, \ldots, R_{d,m}),\] and, for all $1 \leq i \leq d$, the composition $(R_{i,1}, R_{i,2}, \ldots, R_{i,m})$ is denoted by $R_i$.)  Here the second equality follows from Theorem~\ref{skanderabases}, and the third equality follows from Lemma 6.2 of Rhoades \cite{Rhoades}.  

By Theorem~\ref{lascoux}, \[\sum_{R} y_1^{-\frac{n}{d} \cdot R_1} y_2^{-\frac{n}{d} \cdot R_2} \cdots y_d^{-\frac{n}{d} \cdot R_d} = \epsilon_{n/d}(\lambda) \cdot \phi_{n/d}(s_{\lambda})\left(y_1^{-\frac{n}{d}}, y_2^{-\frac{n}{d}}, \ldots, y_d^{-\frac{n}{d}}\right).\]  Expanding via Theorem~\ref{alphabetunique}, we find that \begin{align*} & \phi_{n/d}(s_{\lambda})\left(y_1^{-\frac{n}{d}}, y_2^{-\frac{n}{d}}, \ldots, y_d^{-\frac{n}{d}}\right) \\ & = \sum \langle \phi_{n/d}(s_{\lambda}), s_{\theta_1} s_{\theta_2} \cdots s_{\theta_d} \rangle s_{\theta_1}\left(y_1^{-\frac{n}{d}}\right) s_{\theta_2}\left(y_2^{-\frac{n}{d}}\right) \cdots s_{\theta_d}\left(y_d^{-\frac{n}{d}}\right) \\ & = \sum \langle s_{\lambda}, p_{n/d} \circ (s_{\theta_1} s_{\theta_2} \cdots s_{\theta_d}) \rangle s_{\theta_1}\left(y_1^{-\frac{n}{d}}\right) s_{\theta_2}\left(y_2^{-\frac{n}{d}}\right) \cdots s_{\theta_d}\left(y_d^{-\frac{n}{d}}\right),\end{align*} where again the sums range over all $d$-tuples of partitions $(\theta_1, \theta_2, \ldots, \theta_d)$ such that $|\theta_1| = |\theta_2| = \cdots = |\theta_d| = |\lambda|/n$. 

Note that $p_{n/d} \circ (s_{\theta_1} s_{\theta_2} \cdots s_{\theta_d}) = (s_{\theta_1} s_{\theta_2} \cdots s_{\theta_d}) \circ p_{n/d}$ in view of Proposition~\ref{plethyswitch}.  It follows from Equation 6.4 in Macdonald \cite{Macdonald}, Chapter 1, that $(g_1 g_2) \circ h = (g_1 \circ h)(g_2 \circ h)$ for all symmetric functions $g_1, g_2, h \in \Lambda$.  Thus, we see inductively that \[(s_{\theta_1} s_{\theta_2} \cdots s_{\theta_d}) \circ p_{n/d} = (s_{\theta_1} \circ p_{n/d})(s_{\theta_2} \circ p_{n/d}) \cdots (s_{\theta_d} \circ p_{n/d}).\]  Invoking Proposition~\ref{plethyswitch} again, we find that \[p_{n/d} \circ (s_{\theta_1} s_{\theta_2} \cdots s_{\theta_d}) = (p_{n/d} \circ s_{\theta_1})(p_{n/d} \circ s_{\theta_2}) \cdots (p_{n/d} \circ s_{\theta_d}).\]  

Thus, identifying the coefficients of $s_{\mu}(y_1^{-\frac{n}{d}}) s_{\mu}(y_2^{-\frac{n}{d}}) \cdots s_{\mu}(y_d^{-\frac{n}{d}})$ in our two expressions for \[\chi(c_{mn}^{md} \cdot \operatorname{diag}(y_1, y_2, \ldots, y_d, y_1, y_2, \ldots, y_d, \ldots, y_1, y_2, \ldots, y_d))\] in accordance with Theorem~\ref{alphabetunique}, we may conclude that \[|\textnormal{PYTab}^{j^d}(\lambda, \mu^n)| = \epsilon_{n/d}(\lambda) \cdot \langle s_{\lambda}, p_{n/d}^d \circ s_{\mu} \rangle.\] \qed

\section{Acknowledgments}
This research was undertaken at the University of Michigan, Ann Arbor, under the direction of Prof. David Speyer and with the financial support of the US National Science Foundation via grant DMS-1006294.  It is the author's pleasure to extend his gratitude first and foremost to Prof. Speyer for his dedicated mentorship while this project was in progress and his continued support when it came time to ready the results for eventual publication.  The author would also like to thank Prof. Michael Zieve for his leadership of the REU (Research Experiences for Undergraduates) program hosted by the University of Michigan.  He thanks Victor Reiner, Brendon Rhoades, Richard Stanley, and John Stembridge for helpful conversations.  Finally, he thanks Daniel Bump and Travis Scrimshaw for numerous editorial suggestions.  

The author is presently supported by the NSF Graduate Research Fellowship Program.

\end{document}